\documentclass[11pt]{amsart}

\usepackage{hyperref}
\hypersetup{bookmarksdepth=3}
  
\usepackage{geometry}
\usepackage{amsmath,amssymb}
\usepackage{%stmaryrd,
mathrsfs}
\usepackage{enumerate}
\usepackage{esint}

\usepackage{tensor}

\usepackage{amssymb,amsmath,amsthm}

%%%%%%%%%%%%%%%%% Theorem, Lemma, etc. %%%%%%%%%%%%%%%%%%%%%%%

\newtheorem{theorem}{Theorem}
\newtheorem{lemma}[theorem]{Lemma}

\newtheorem{definition}[theorem]{Definition}

\theoremstyle{remark}\newtheorem{remark}[theorem]{Remark}

\newtheorem{proposition}[theorem]{Proposition}

\usepackage{graphicx}

%%%%%%%%%%%%%%%%%%%% New Commands Defined %%%%%%%%%%%%%%%%%%%%

  %%%% Usage:\eq{  x^2 + 9 &= 0}

\newcommand{\R}{{\mathbb R}}

\newcommand{\Z}{{\mathbb Z}}

\newcommand{\C}{{\mathbb C}}

\begin{document}

{\let\thefootnote\relax\footnote{Date: 2nd October 2017. 

\textcopyright 2017 by the author. Faithful reproduction of this article, in its entirety, by any means is permitted for noncommercial purposes.}}

\title{NLS in the modulation space $M_{2,q}(\mathbb R)$. }

\author{N. Pattakos}
\address{nikolaos pattakos, department of mathematics, institute for analysis, karlsruhe institute of technology, 76128 karlsruhe, germany }
\email{nikolaos.pattakos@kit.edu}

\begin{abstract}
{We show the existence of weak solutions in the extended sense of the Cauchy problem for the cubic nonlinear Schr\"odinger equation in the modulation space $M_{2,q}^{s}(\mathbb R)$, $1\leq q\leq2$ and $s\geq0.$ In addition, for either $s\geq 0$ and $1\leq q\leq\frac32$ or $\frac32<q\leq 2$ and $s>\frac23-\frac1{q}$ we show that the Cauchy problem is unconditionally wellposed in $M_{2,q}^{s}(\R).$ It is done with the use of the differentiation by parts technique which had been previously used in the periodic setting.}
\end{abstract}

\maketitle
\pagestyle {myheadings}

\begin{section}{introduction and main results}
\markboth{\normalsize N. Pattakos }{\normalsize  NLS in the modulation space $M_{2,q}(\mathbb R)$}

In this paper we study the one dimensional cubic NLS:
\begin{equation}
\label{maineq}
\begin{cases} iu_{t}-u_{xx}\pm|u|^{2}u=0 &,\ (t,x)\in\mathbb R^{2}\\
u(0,x)=u_{0}(x) &,\ x\in\mathbb R\\
\end{cases}
\end{equation}
with initial data $u_{0}$ in the modulation space $M_{2,q}^{s}(\mathbb R).$ We are interested in existence of solutions and in unconditional wellposedness of the problem. Modulation spaces were introduced by Feichtinger in \cite{FEI} and have been used extensively in the study of nonlinear dispersive equations. See \cite{BH} for many of their properties such as embeddings in other known function spaces and equivalent expressions for their norm. Since their introduction, they have become canonical for both time-frequency and phase-space analysis since they provide an excellent substitute in estimates that are known to fail on Lebesgue spaces. 

Let us mention some already known results on local wellposedness of NLS (\ref{maineq}) with initial data in a modulation space. From \cite{FEI} (Proposition $6.9$) it is known that for $s>1/q'$ or $s\geq 0$ and $q=1$ the modulation space $M^{s}_{p,q}(\mathbb R)$ is a Banach algebra and therefore an easy Banach contraction principle argument together with the fact that $e^{it\Delta}$ is a bounded operator from $M_{p,q}^{s}(\R)$ to itself (see \cite{BGOR} and \cite{BO}) implies that NLS (\ref{maineq}) is locally wellposed for $u_{0}\in M^{s}_{p,q}(\mathbb R)$ with solution $u\in C([0,T];M^{s}_{p,q}(\mathbb R))$, $T>0.$ Also in \cite{SG} the case $u_{0}\in M_{2,q}(\mathbb R)$, $2\leq q<\infty$, was considered which is a space that does not belong to the previous family of Banach algebras.

The definition of modulation spaces is the following: Set $Q_{0}=[-\frac12, \frac12)$ and $Q_{k}=Q_{0}+k$ for all $k\in\mathbb Z.$ Consider a family of functions $\{\sigma_{k}\}_{k\in\mathbb Z}\subset C^{\infty}(\mathbb R)$ satisfying 
\begin{itemize}
\item
$ \exists c > 0: \, \forall k \in \Z: \,
\forall \eta \in Q_{k}: \,
|\sigma_{k}(\eta)| \geq c$,
\item
$\forall k \in \Z: \,
\mbox{supp}(\sigma_{k}) \subseteq \{\xi\in\R:|\xi-k|\leq1\}$,
\item
$\sum_{k \in \Z} \sigma_{k} = 1$,
\item
$\forall m \in \mathbb N_0: \, \exists C_m > 0: \,
\forall k \in \Z: \,
\forall \alpha \in \mathbb N: \,
\alpha \leq m \Rightarrow
\|{D^\alpha \sigma_{k}}\|_\infty \leq C_m$
\end{itemize}
and define the isometric decomposition operators
\begin{equation}
\label{iso}
\Box_{k} := \mathcal F^{(-1)} \sigma_{k} \mathcal F, \quad
\left(\forall k \in \Z \right).
\end{equation}
Then the norm of a tempered distribution $f\in S'(\R)$ in the modulation space $M^{s}_{p,q}(\mathbb R)$, $s\in\mathbb R, 1\leq p,q\leq\infty$, is 
\begin{equation}
\label{def}
\|f\|_{M^{s}_{p,q}}:=\Big(\sum_{k\in\mathbb Z}\langle k\rangle^{sq}\|\Box_{k}f\|_{p}^{q}\Big)^{\frac1{q}}=\Big(\sum_{k\in\mathbb Z}(1+|k|^{2})^{\frac{sq}{2}}\|\Box_{k}f\|_{p}^{q}\Big)^{\frac1{q}},
\end{equation}
with the usual interpretation when the index $q$ is equal to infinity. Different choices of such sequences of functions $\{\sigma_{k}\}_{k\in\mathbb Z}$ lead to equivalent norms in $M^{s}_{p,q}(\mathbb R).$ When $s=0$ we denote the space $M^{0}_{p,q}(\mathbb R)$ by $M_{p,q}(\mathbb R).$ In the special case where $p=q=2$ we have $M_{2,2}^{s}(\R)=H^{s}(\R)$ the usual Sobolev spaces. Here we will use that for $s>1/q'$ and $1\leq p, q\leq\infty$, the embedding 
\begin{equation}
\label{yeye}
M_{p,q}^{s}(\R)\hookrightarrow C_{b}(\R)=\{f:\R\to\C/\ f\ \mbox{continuous and bounded}\},
\end{equation}
and for $\Big(1\leq p_{1}\leq p_{2}\leq \infty$, $1\leq q_{1}\leq q_{2}\leq\infty$, $s_{1}\geq s_{2}\Big)$ or $\Big(1\leq p_{1}\leq p_{2}\leq \infty$, $1\leq q_{2}<q_{1}\leq\infty$, $s_{1}>s_{2}+\frac1{q_{2}}-\frac1{q_{1}}\Big)$ the embedding
\begin{equation}
\label{yeye233}
M_{p_{1}, q_{1}}^{s_{1}}(\R)\hookrightarrow M_{p_{2}, q_{2}}^{s_{2}}(\R),
\end{equation}
are both continuous and can be found in \cite{FEI} (Proposition $6.8$ and Proposition $6.5$). Also, by \cite{BH} it is known that for any $1<p\leq\infty$ we have the embedding $M_{p,1}(\R)\hookrightarrow L^{p}(\R)\cap L^{\infty}(\R)$ which together with the fact that $M_{2,2}(\R)=L^{2}(\R)$ and interpolation, imply that for any $p\in[2,\infty]$ we have the embedding $M_{p,p'}(\R)\hookrightarrow L^{p}(\R).$ Later in Subsection $2.4$ we will use this fact for $p=3$, that is 
\begin{equation}
\label{hhh}
M_{3,\frac32}(\R)\hookrightarrow L^{3}(\R).
\end{equation}
In order to give a meaning to solutions of the NLS in $C([0,T], M_{2,q}(\R))$ and to the nonlinearity $\mathcal N(u):=u|u|^{2}$ we need the following definitions which first appeared in \cite{C1}, \cite{C2} where power series solutions to the cubic NLS was studied (see also \cite{GUB} for similar considerations on the KdV):

\begin{definition}
\label{def1}
A sequence of Fourier cutoff operators is a sequence of Fourier multiplier operators $\{T_{N}\}_{N\in \mathbb N}$ on $\mathcal S'(\mathbb R)$ with multipliers $m_{N}:\R\to\C$ such that
\begin{itemize}
\item $m_{N}$ has compact support on $\R$ for every $N\in \mathbb N$,
\item $m_{N}$ is uniformly bounded,
\item $\lim_{N\to\infty}m_{N}(x)=1$, for any $x\in\R$. 
\end{itemize}
\end{definition}

\begin{definition}
\label{def2}
Let $u\in C([0,T],M_{2,q}^{s}(\R))$. We say that $\mathcal N(u)$ exists and is equal to a distribution $w\in\mathcal S'((0,T)\times\R)$ if for every sequence $\{T_{N}\}_{N\in\mathbb N}$ of Fourier cutoff operators we have
\begin{equation}
\label{wknows}
\lim_{N\to\infty}\mathcal N(T_{N}u)=w,
\end{equation}
in the sense of distributions on $(0,T)\times\R$.
\end{definition}

\begin{definition}
\label{def3}
We say that $u\in C([0,T],M_{2,q}^{s}(\R))$ is a weak solution in the extended sense of NLS (\ref{maineq}) if the following are satisfied 
\begin{itemize}
\item $u(0,x)=u_{0}(x)$,
\item the nonlinearity $\mathcal N(u)$ exists in the sense of Definition \ref{def2},
\item $u$ satisfies (\ref{maineq}) in the sense of distributions on $(0,T)\times\R$, where the nonlinearity $\mathcal N(u)=u|u|^{2}$ is interpreted as above.
\end{itemize}
\end{definition}
Our main result which guarantees existence of weak solutions in the extended sense is the following:

\begin{theorem}
\label{th1}
Let $1\leq q\leq2$ and $s\geq0$. For $u_{0}\in M_{2,q}^{s}(\mathbb R)$ there exists a weak solution in the extended sense $u\in C([0,T];M_{2,q}^{s}(\mathbb R))$ of NLS (\ref{maineq}) with initial condition $u_{0}$, where the time $T$ of existence depends only on $\|u_{0}\|_{M_{2,q}^{s}}.$ Moreover, the solution map is Lipschitz continuous. 
\end{theorem}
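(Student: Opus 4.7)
My strategy is to construct $u$ as the limit of smooth solutions produced from regularised initial data, using a normal-form reduction (differentiation by parts in time, exactly as in the periodic setting) to obtain uniform $M^{s}_{2,q}$-estimates at this low regularity. First, let $T_{N}$ be a smooth Fourier cutoff and set $u_{0}^{(N)}:=T_{N}u_{0}$, so that $u_{0}^{(N)}\to u_{0}$ in $M^{s}_{2,q}(\R)$ and classical $H^{\infty}$-theory supplies smooth solutions $u^{(N)}\in C([0,T_{N});H^{\infty}(\R))$ of (\ref{maineq}). The initial task is to establish a common lifespan $T=T(\|u_{0}\|_{M^{s}_{2,q}})>0$ together with a uniform bound $\sup_{N}\|u^{(N)}\|_{C([0,T];M^{s}_{2,q})}<\infty$.

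Passing to the interaction representation $v^{(N)}(t):=e^{-it\partial_{x}^{2}}u^{(N)}(t)$ and applying $\Box_{k}$ to Duhamel's formula, one obtains a trilinear expression carrying the oscillatory phase $e^{-it\Phi}$ with $\xi=\xi_{1}-\xi_{2}+\xi_{3}$ and $\Phi=\xi_{1}^{2}-\xi_{2}^{2}+\xi_{3}^{2}-\xi^{2}=-2(\xi_{1}-\xi_{2})(\xi_{3}-\xi_{2})$. Since each $\xi_{j}$ is localised near an integer $k_{j}$, the sum over $(k_{1},k_{2},k_{3})$ naturally splits into a resonant part $(k_{1}-k_{2})(k_{3}-k_{2})=0$ and a non-resonant part. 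On the resonant portion the trilinear operator reduces to a frequency-local expression controlled by Plancherel, Minkowski and the embeddings (\ref{yeye}), (\ref{hhh}). On the non-resonant portion one integrates by parts in $t$, trading $e^{-it\Phi}$ for $\Phi^{-1}$; this produces a boundary trilinear term together with a quintic error obtained by substituting the equation (\ref{maineq}) for the time derivative of $v^{(N)}$ that appears. Both pieces should obey multilinear $M^{s}_{2,q}$-estimates, yielding a closed bound of the form $\|v^{(N)}\|_{C_{T}M^{s}_{2,q}}\lesssim\|u_{0}\|_{M^{s}_{2,q}}+T^{\theta}\bigl(\|v^{(N)}\|_{C_{T}M^{s}_{2,q}}^{3}+\|v^{(N)}\|_{C_{T}M^{s}_{2,q}}^{5}\bigr)$, which delivers the uniform lifespan and bound.

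The very same multilinear estimates applied to $u^{(N)}-u^{(M)}$ then show $\{u^{(N)}\}$ is Cauchy in $C([0,T];M^{s}_{2,q}(\R))$ and that the limit $u$ depends Lipschitz continuously on $u_{0}$. To verify Definition \ref{def3}, observe that every term in the normal-form identity is multilinearly continuous in $M^{s}_{2,q}$, so that given any sequence of Fourier cutoffs $\{S_{N}\}$ the approximate nonlinearities $\mathcal{N}(S_{N}u)$ converge in $\mathcal{S}'((0,T)\times\R)$ to one and the same distribution $w$, independent of the cutoff sequence; distributional validity of (\ref{maineq}) is then immediate from passing to the limit in the regularised equation. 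The principal obstacle is the non-resonant multilinear estimate: the denominator $\Phi^{-1}$ gained from the differentiation by parts must be summed against the $\ell^{q}$ modulation structure uniformly over $(k_{1},k_{2},k_{3})$, and at the endpoint $q=2$, $s=0$ this is tight---it is precisely here that the auxiliary embedding (\ref{hhh}) into $L^{3}$ is needed to absorb the borderline logarithmic loss in the frequency sum.
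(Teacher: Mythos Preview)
Your outline has two genuine gaps. First, a single differentiation by parts does not close. The boundary trilinear term you obtain carries no factor of $T^{\theta}$: after integrating $e^{-it\Phi}\Phi^{-1}$ you get $\tilde Q$-terms evaluated at times $t$ and $0$, so the a priori inequality is of the form $\|v\|\lesssim\|u_{0}\|+C\|v\|^{3}+T(\cdots)$, not $\|u_{0}\|+T^{\theta}(\|v\|^{3}+\|v\|^{5})$. The paper deals with this by introducing a large parameter $N$, splitting the non-resonant set into $|\Phi|\le N$ (kept under the time integral, with loss $N^{1/q'+}$) and $|\Phi|>N$ (where the boundary term now gains $N^{1/q'-1+}\ll1$). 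More seriously, the quintic remainder you produce---$\tilde Q^{1,t}_{n}$ applied to $\partial_{t}v_{n_{1}}=\Box_{n_{1}}(|u|^{2}u)$---is \emph{not} summable in $\ell^{q}L^{2}$ by a direct multilinear estimate in the full range $1\le q\le 2$, $s\ge0$: the single denominator $(n-n_{1})(n-n_{3})$ is not enough to control the sum over the two new inner frequencies. The paper's remedy is an \emph{infinite} iteration: the quintic piece is split again via sets $C_{j}$ comparing $|\tilde\mu_{j+1}|$ to $|\tilde\mu_{j}|^{1-1/100}$, differentiated by parts once more, and so on, with a divisor-counting argument (equation~(\ref{num})) controlling the combinatorics. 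The series is then summed thanks to a double-factorial decay that beats the tree-counting growth precisely when $q\le2$ (see (\ref{factori})).

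Second, your invocation of the embedding (\ref{hhh}) at the endpoint $q=2$, $s=0$ is incorrect: $M^{0}_{2,2}(\R)=L^{2}(\R)$ does not embed into $L^{3}(\R)$, so (\ref{hhh}) gives you nothing there. In the paper, the $L^{3}$ embedding is used \emph{only} in the uniqueness argument (Theorem~\ref{mainyeah}, Subsection~2.4), and this is exactly why that theorem requires the extra hypothesis $s>\tfrac{2}{3}-\tfrac{1}{q}$ when $q>\tfrac{3}{2}$. For the existence result the resonant part $R_{2}^{t}$ is handled instead by $\ell^{q}\hookrightarrow\ell^{2}$ (Lemma~\ref{lem}), and the non-resonant estimates rely on the iterated normal form and divisor counting, not on any Lebesgue embedding. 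Your plan as written would at best recover existence in the narrower range of Theorem~\ref{mainyeah}.
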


\begin{remark}
The restriction on the range of $q$ appears by the construction of the solution of the NLS. That is, we decompose the NLS into countably many parts and at the end we sum all of them together. In order for the summation to make sense in the appropriate space we obtain $1\leq q<3$ (see remarks after (\ref{factori}) below). Moreover, when estimating the resonant operator $R_{2}^{t}$ in Lemma \ref{lem} the restriction $q\leq2$ appears naturally. 
\end{remark}
The next theorem is about the unconditional wellposedness of NLS (\ref{maineq}) with initial data in a modulation space, that is, uniqueness in $C([0,T],M_{2,q}^{s}(\R))$ without intersecting with any auxiliary function space (see \cite{TK} where this notion first appeared):

\begin{theorem}
\label{mainyeah}
For $u_{0}\in M_{2,q}^{s}(\R)$, with either $s\geq0$ and $1\leq q\leq\frac32$ or $\frac32<q\leq2$ and $s>\frac23-\frac1{q}$, the solution $u$ with initial condition $u_{0}$ constructed in Theorem \ref{th1} is unique in $C([0,T],M_{2,q}^{s}(\R))$. 
\end{theorem}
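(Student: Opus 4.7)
The plan is to take any two solutions $u, v \in C([0,T], M_{2,q}^{s}(\R))$ of NLS (\ref{maineq}) with common initial datum $u_{0}$ (with $u$ the solution furnished by Theorem \ref{th1}) and to show that $u \equiv v$. A crucial preliminary observation is that under the stated hypotheses on $(s,q)$, the embedding (\ref{yeye233}) combined with (\ref{hhh}) gives
\begin{equation*}
M_{2,q}^{s}(\R) \hookrightarrow M_{3,3/2}(\R) \hookrightarrow L^{3}(\R).
\end{equation*}
Indeed, for $1 \leq q \leq 3/2$ and $s \geq 0$ the first embedding follows from the branch of (\ref{yeye233}) with $q_{1} \leq q_{2}$, while for $3/2 < q \leq 2$ one must use the strict branch with $q_{1} = q > q_{2} = 3/2$, $s_{1} = s$, $s_{2} = 0$, which demands precisely $s > 1/q_{2} - 1/q_{1} = 2/3 - 1/q$. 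Hence both solutions lie in $C([0,T], L^{3}(\R))$, which is what will feed the trilinear estimates.

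Next I would apply to the difference $w := u - v$ the same differentiation by parts decomposition used in the construction of $u$ in Theorem \ref{th1}. Writing
\begin{equation*}
\mathcal N(u) - \mathcal N(v) = u^{2}\overline{w} + u w \overline{v} + w|v|^{2},
\end{equation*}
each of the three trilinear pieces carries exactly one factor of $w$ or $\overline{w}$. Passing to the interaction variable $\widetilde{w}(t) := e^{-it\pd_{x}^{2}}w(t)$ and inserting the blocks $\Box_{k}$, one splits the resulting trilinear sum into a resonant part (where the phase $\phi = k^{2} - k_{1}^{2} + k_{2}^{2} - k_{3}^{2}$ vanishes) and a non-resonant part. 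On the non-resonant part one integrates by parts in time via $\pd_{t}(e^{it\phi}/(i\phi)) = e^{it\phi}$, producing boundary terms at times $0$ and $t$ together with a quintic remainder in which the oscillatory phase has been converted into a multiplier of size $|\phi|^{-1}$.

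The main obstacle is to close the resulting estimates uniformly in $w$. The boundary terms at $t=0$ vanish because $w(0) = 0$, while those at time $t$ and the quintic remainder gain a power $T_{1}^{\alpha}$ by time integration and use the $L^{3}$ control afforded by the embedding above to absorb the extra factor in the cubic and quintic multilinear expressions. The genuinely delicate ingredient is the resonant contribution, which is treated exactly as the operator $R_{2}^{t}$ in Lemma \ref{lem}; this is where the restriction $q \leq 2$ is used, and one shows that the map $w \mapsto R_{2}^{t}(w, u, v)$ is Lipschitz in the $M_{2,q}^{s}$ norm with a constant bounded by a polynomial in $\|u\|$ and $\|v\|$ times $T_{1}^{\alpha}$.

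Combining the three types of estimates on a subinterval $[0,T_{1}] \subseteq [0,T]$ yields an inequality of the form
\begin{equation*}
\|w\|_{C([0,T_{1}], M_{2,q}^{s})} \leq C(u_{0})\, T_{1}^{\alpha}\, \|w\|_{C([0,T_{1}], M_{2,q}^{s})},
\end{equation*}
so that choosing $T_{1}$ sufficiently small forces $w \equiv 0$ on $[0,T_{1}]$. Since the $M_{2,q}^{s}$ norms of both $u$ and $v$ remain bounded on $[0,T]$, iterating on successive intervals of length $T_{1}$ propagates uniqueness across all of $[0,T]$, proving Theorem \ref{mainyeah}.
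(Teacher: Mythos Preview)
Your opening observation about the embedding $M_{2,q}^{s}(\R)\hookrightarrow L^{3}(\R)$ is correct and is exactly how the paper begins. But from that point on your sketch diverges from what is actually needed, and as written it does not close.

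First, you describe what is essentially a \emph{single} differentiation-by-parts step (cubic $\to$ boundary terms $+$ quintic remainder) and then assert that the quintic remainder can be estimated directly with a gain of $T_{1}^{\alpha}$. This is the crux, and it is not true in this setting: after one step the non-resonant quintic piece (the analogue of $N_{32}^{t}$, cf.\ (\ref{patel})) is \emph{not} bounded in $l^{q}L^{2}$ by the available norms, and the whole point of Subsections~2.1--2.2 is that one must iterate infinitely many times, producing the full tree expansion and the operator $\Gamma_{u_{0}}$. The paper's uniqueness proof shows that \emph{any} solution $u\in C([0,T],M_{2,q}^{s})$ satisfies the infinite normal-form identity $u=\Gamma_{u_{0}}u$, and then invokes the contraction estimate (\ref{argg10})--(\ref{argg11}) already established for $\Gamma_{u_{0}}$.

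Second, and this is the genuinely new ingredient you are missing: to pass from the finite expansion to the infinite one for a \emph{general} solution (not the smooth approximants of Lemma~\ref{dankda}), one must show that the remainder $N_{2}^{(J)}(v)\to 0$ as $J\to\infty$. This is Lemma~\ref{finafinafina}, and it is precisely here that the $L^{3}$ embedding is used: it gives $\partial_{t}v_{n}\in C([0,T],L^{1})$, which both justifies the product rule in (\ref{ttr}) (via Lemma~\ref{didi}) and yields the bound $\|\partial_{t}v_{n}\|_{l^{\infty}L^{2}}\lesssim\|v\|_{M_{2,q}}^{3}$ needed to kill the remainder. Your proposal invokes the $L^{3}$ control only vaguely (``to absorb the extra factor''), without identifying this role.

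Third, a smaller but concrete error: the boundary term at time $t$ (the analogue of $N_{0}^{(j)}(v)(t)$ in (\ref{gamaa})) carries \emph{no} time integral and therefore gains no factor $T_{1}^{\alpha}$. Its smallness comes instead from the phase denominator, i.e.\ from the factor $N^{\frac{1}{q'}-1+}$ in Lemma~\ref{fir1} and its higher-order analogues, after choosing the frequency threshold $N$ large. The contraction inequality one actually obtains is of the form $\|w\|\le (c_{1}(N)+c_{2}(N)T_{1})\|w\|$ with $c_{1}(N)\to 0$ as $N\to\infty$, not purely $C T_{1}^{\alpha}\|w\|$.
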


\begin{remark}
\label{bbbb}
When $q=2$ the value $s=\frac16$ is also allowed in the previous theorem since then we have the space $M_{2,2}^{\frac16}(\R)=H^{\frac16}(\R)\hookrightarrow L^{3}(\R)$. 
\end{remark}

For its proof we are going to use the differentiation by parts technique that was introduced in \cite{BIT} to attack similar problems for the KdV equation but with periodic initial data. In \cite{GKO} this technique was used to prove unconditional wellposedness of the periodic cubic NLS. In this paper we use this technique to attack an NLS with a continuous Fourier variable, in the sense that our initial data is far from being periodic. For this reason there are some major differences and some difficulties that do not occur in the periodic setting. We follow very closely the ideas of \cite{GKO} but we have to replace numbers and estimates for sums of numbers by operators and estimates for sums of suitable operator norms. This will become clearer in the next section where the proofs of Theorem \ref{th1} and Theorem \ref{mainyeah} will be given. Here let us mention that a similar approach was used in \cite{HY} to study the cubic NLS and the mKdV on the real line and obtain unconditional wellposedness results with initial data in the Sobolev space $H^{s}(\R)$. Finally, similar techniques were used in \cite{JS} to study the quadratic nonlinear Klein-Gordon equation. 

Since we are interested in the space $M_{2,q}^{s}(\R)$ there is a more convenient expression for its norm which is the one we are going to use in our calculations. Let us denote by $\tilde\Box_{k}$ the frequency projection operator $\mathcal F^{(-1)}1_{[k,k+1]}\mathcal F$, where $1_{[k,k+1]}$ is the characteristic function of the interval $[k,k+1]$, $k\in\Z.$ Then it can be proved that 
\begin{equation}
\label{normeq}
\|f\|_{M_{2,q}^{s}}\approx\Big(\sum_{k\in\Z}\langle k\rangle^{sq}\|\tilde\Box_{k}f\|_{2}^{q}\Big)^{\frac1{q}},
\end{equation}
or in other words, the two norms are equivalent in $M_{2,q}^{s}(\R)$.

To conclude this section, firstly, we need that for $S(t)=e^{it\Delta}$ the Schr\"odinger semigroup we have the equality:
\begin{equation}
\label{Sch}
\|S(t)f\|_{2}=\|f\|_{2},
\end{equation}
and secondly, we need the multiplier estimate (see \cite{BH}, Proposition $1.9$):

\begin{lemma}
\label{Bern}
Let $1 \leq p \leq \infty$ and $\sigma \in C^{\infty}_{c}(\R)$. Then the multiplier operator $T_\sigma: S(\R) \to S'(\R)$ defined by
\begin{equation*}
(T_\sigma f) = \mathcal F^{-1}(\sigma \cdot \hat{f}), \quad
\forall f \in S(\R)
\end{equation*}
is bounded on $L^p(\R)$ and
\begin{equation*}
\|T_{\sigma}\|_{L^p(\R)\to L^p(\R)} \lesssim\|\check\sigma\|_{L^{1}(\R)}.
\end{equation*}
\end{lemma}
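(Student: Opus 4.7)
The plan is to recognise $T_\sigma$ as convolution against the inverse Fourier transform of $\sigma$ and then apply Young's inequality. For $f \in \mathcal S(\R)$, the convolution theorem gives
\begin{equation*}
T_\sigma f = \mathcal F^{-1}(\sigma \cdot \hat f) = \check\sigma * f,
\end{equation*}
up to a harmless multiplicative constant coming from the chosen Fourier normalisation, which is absorbed in the $\lesssim$ symbol. Since $\sigma \in C^\infty_c(\R)$, the inverse Fourier transform $\check\sigma$ is a Schwartz function, hence in particular lies in $L^1(\R)$, so the convolution on the right is well defined and the $L^1$-norm appearing in the statement is finite.

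Next I would apply Young's convolution inequality with exponents satisfying $1 + \tfrac1p = \tfrac11 + \tfrac1p$ to obtain
\begin{equation*}
\|T_\sigma f\|_{L^p(\R)} = \|\check\sigma * f\|_{L^p(\R)} \leq \|\check\sigma\|_{L^1(\R)} \, \|f\|_{L^p(\R)}
\end{equation*}
for all $f \in \mathcal S(\R)$. For $1 \leq p < \infty$ the estimate then extends to all of $L^p(\R)$ by density of $\mathcal S(\R)$, and for $p = \infty$ it follows directly from the pointwise bound $|(\check\sigma * f)(x)| \leq \|\check\sigma\|_{L^1} \|f\|_{L^\infty}$.

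There is essentially no obstacle: once the multiplier is identified as convolution against the $L^1$ kernel $\check\sigma$, the conclusion is immediate from Young's inequality. The only small point to track is the Fourier constant in the identity $\mathcal F^{-1}(\sigma \hat f) = c \, \check\sigma * f$, which depends on whether one includes a $2\pi$ in the definition of $\mathcal F$; under the $\lesssim$ in the statement this is irrelevant.
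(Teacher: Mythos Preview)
Your proof is correct and is the standard argument. The paper itself does not supply a proof of this lemma; it merely states the result and cites \cite{BH}, Proposition~1.9, so there is nothing to compare against beyond noting that your convolution-plus-Young's-inequality argument is exactly the expected proof.
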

A useful consequence is that for $1\leq p_{1}\leq p_{2}\leq\infty$ the following holds:
\begin{equation}
\label{Bern1}
\|\Box_{k}f\|_{p_{2}}\lesssim\|\Box_{k}f\|_{p_{1}},
\end{equation}
where the implicit constant is independent of $k$ and the function $f.$ This is done by considering a "fattened" function $\tilde{\sigma}_{0}$ which is identically $1$ on the support of $\sigma_{0}$ and then by defining $\tilde{\sigma}_{k}(\xi)=\tilde{\sigma}_{0}(\xi-k)$, $\tilde\Box_{k}=\mathcal F^{-1}\tilde{\sigma}_{k}\mathcal F$, for $k\in\Z$, we have that  
$$\|\Box_{k}f\|_{p_{2}}=\|\tilde\Box_{k}\Box_{k}f\|_{p_{2}}=\|\mathcal F^{-1}(\tilde{\sigma}_{k})\ast\Box_{k}f\|_{p_{2}}\leq\|\mathcal F^{-1}(\tilde{\sigma}_{k})\|_{r}\|\Box_{k}f\|_{p_{1}}=\|\mathcal F^{-1}(\tilde{\sigma}_{0})\|_{r}\|\Box_{k}f\|_{p_{1}},$$
where we applied Young's inequality with indices $1+\frac1{p_{2}}=\frac1{r}+\frac1{p_{1}}$ and we used that all $\tilde{\sigma}_{k}$ are translations of $\tilde{\sigma}_{0}$.

Let us also recall the following number theoretic fact (see \cite{HW}, Theorem $315$) which is going to be used throughout the proof of Theorem \ref{th1}: Given an integer $m$, let $d(m)$ denote the number of divisors of $m$. Then we have
\begin{equation}
\label{num}
d(m)\lesssim e^{c\frac{\log m}{\log\log m}}=o(m^{\epsilon}),
\end{equation}
for all $\epsilon>0$. 

Lastly, before we proceed into the next section let us fix the notation: For a number $1\leq p\leq\infty$ we write $p'$ for its dual exponent, that is the number that satisfies $\frac1{p}+\frac1{p'}=1.$ We denote by $S'(\R)$ the space of tempered distributions and by $D'(\R)$ the space of distributions. For two quantities $A, B$ (they can be functions or numbers) whenever we write $A\lesssim B$ we mean that there is a universal constant $C>0$ such that $A\leq CB$. 

Next section consists of four subsections. In Subsection $2.1$ the first steps of the iteration process are presented and in Subsection $2.2$ the tree notation and the induction step finish the infinite iteration procedure. Then, in Subsection $2.3$ Theorem \ref{th1} is proved where the solution is constructed through an approximation by smooth solutions and in Subsection $2.4$ the unconditional uniqueness of Theorem \ref{mainyeah} is presented under the extra assumption that the solution lies in the space $C([0,T],L^{3}(\R)).$

\end{section}

\begin{section}{proof of the main theorems}

\subsection{The first steps of the iteration process}

In this subsection we present the first steps of the differentiation by parts technique adapted to the continuous setting, that is NLS (\ref{maineq}) with initial data that is not periodic. Since it is the first time that this is done, we try to be detailed for the interested reader. We will also use the same notation as in \cite{GKO} so that a direct comparison between the two papers can be made and the differences can be emphasised. 

From here on, we consider only the case $s=0$ in Theorem \ref{th1} since for $s>0$ similar considerations apply. See Remark \ref{reme} at the end of Subsection $2.2$ for a more detailed argument. Also, as we mentioned before we are going to use expression (\ref{normeq}) for the norm in $M_{2,q}(\R)$ and for convenience we will write $\Box_{n}$ instead of $\tilde\Box_{n}$ and $\sigma_{k}$ instead of $1_{[k,k+1]}$. 

For $n\in\mathbb Z$ let us define
\begin{equation}
\label{ww0}
u_{n}(t,x)=\Box_{n} u(t,x),
\end{equation}
\begin{equation}
\label{ww1}
v(t,x)=e^{it\partial_{x}^{2}}u(t,x),
\end{equation}
\begin{equation}
\label{ww}
v_{n}(t,x)=e^{it\partial_{x}^{2}}u_{n}(t,x)=\Box_{n}[(e^{it\partial_{x}^{2}}u(t,x)]=\Box_{n}v(t,x).
\end{equation}
Also for $(\xi,\xi_{1},\xi_{2},\xi_{3})\in\mathbb R^{4}$ we define the function
$$\Phi(\xi,\xi_{1},\xi_{2},\xi_{3})=\xi^{2}-\xi_{1}^{2}+\xi_{2}^{2}-\xi_{3}^{2},$$
which is equal to 
$$\Phi(\xi,\xi_{1},\xi_{2},\xi_{3})=2(\xi-\xi_{1})(\xi-\xi_{3}),$$
if $\xi=\xi_{1}-\xi_{2}+\xi_{3}$. Our main equation (\ref{maineq}) implies that
\begin{equation}
\label{main2}
i\partial_{t}u_{n}-(u_{n})_{xx}\pm\Box_{n}(|u|^{2}u)=0,
\end{equation}
and by calculating ($u=\sum_{k}\Box_{k}u$)
$$\Box_{n}(u\bar{u}u)=\Box_{n}\sum_{n_{1},n_{2},n_{3}}u_{n_{1}}\bar{u}_{n_{2}}u_{n_{3}}=\sum_{n_{1}-n_{2}+n_{3}\approx n}\Box_{n}[u_{n_{1}}\bar{u}_{n_{2}}u_{n_{3}}],$$
where by $\approx n$ we mean $=n$ or $=n+1$ or $=n-1.$ Later, during the calculations we will also write $\xi\approx n$ where $\xi$ is going to be a continuous variable and $n$ an integer. By that we will mean that $\xi\in[n,n+1)$ or more generally that $\xi$ is in an interval around $n$.

Next we do the change of variables $u_{n}(t,x)=e^{-it\partial_{x}^{2}}v_{n}(t,x)$ and arrive at the expression
\begin{equation}
\label{main3}
\partial_{t}v_{n}=\pm i\sum_{n_{1}-n_{2}+n_{3}\approx n}\Box_{n}\Big(e^{it\partial_{x}^{2}}[e^{-it\partial_{x}^{2}}v_{n_{1}}\cdot e^{it\partial_{x}^{2}}\bar{v}_{n_{2}}\cdot e^{-it\partial_{x}^{2}}v_{n_{3}}]\Big).
\end{equation}
We continue by presenting the first steps of our splitting procedure. Define the $1$st generation operators by
\begin{equation}
\label{main4}
Q^{1,t}_{n}(v_{n_{1}},\bar{v}_{n_{2}},v_{n_{3}})(x)=\Box_{n}\Big(e^{it\partial_{x}^{2}}[e^{-it\partial_{x}^{2}}v_{n_{1}}\cdot e^{it\partial_{x}^{2}}\bar{v}_{n_{2}}\cdot e^{-it\partial_{x}^{2}}v_{n_{3}}]\Big),
\end{equation}
and continue with the splitting 
\begin{equation}
\label{main11}
\partial_{t}v_{n}=\pm i\sum_{n_{1}-n_{2}+n_{3}\approx n}Q^{1,t}_{n}(v_{n_{1}},\bar{v}_{n_{2}},v_{n_{3}})=\sum_{\substack{n_{1}\approx n\\ or\\ n_{3}\approx n}}\ldots+\sum_{n_{1}\not\approx n\not\approx n_{3}}\ldots.
\end{equation}
We define the resonant part
\begin{equation}
\label{main9}
R^{t}_{2}(v)(n)-R^{t}_{1}(v)(n)=\Big(\sum_{n_{1}\approx n}Q^{1,t}_{n}+\sum_{n_{3}\approx n}Q^{1,t}_{n}\Big)-\sum_{\substack{n_{1}\approx n\\ and\\ n_{3}\approx n}}Q^{1,t}_{n}(v_{n_{1}},\bar{v}_{n_{2}},v_{n_{3}}),
\end{equation}
with $R^{t}_{2}$ being equal to the sum of the first two summands and $R^{t}_{1}$ being equal with the last summand, and the non-resonant part
\begin{equation}
\label{main10}
N_{1}^{t}(v)(n)=\sum_{n_{1}\not\approx n\not\approx n_{3}}Q^{1,t}_{n}(v_{n_{1}},\bar{v}_{n_{2}},v_{n_{3}}),
\end{equation}
which implies the following expression for our NLS (we drop the factor $\pm i$ in front of the sum since they will play no role in our analysis)
\begin{equation}
\label{mainmain}
\partial_{t}v_{n}=R^{t}_{2}(v)(n)-R^{t}_{1}(v)(n)+N_{1}^{t}(v)(n).
\end{equation}

\begin{remark}
In the following part of the paper a series of lemmata will be presented. Unless stated otherwise we will always assume that $1\leq q<\infty$. 
\end{remark}
For the resonant part we have the lemma:

\begin{lemma}
\label{lem}
For $j=1,2$ 
$$\|R^{t}_{j}(v)\|_{l^{q}L^{2}}\lesssim\|v\|^{3}_{M_{2,q}},$$
and
$$\|R^{t}_{j}(v)-R^{t}_{j}(w)\|_{l^{q}L^{2}}\lesssim(\|v\|^{2}_{M_{2,q}}+\|w\|^{2}_{M_{2,q}})\|v-w\|_{M_{2,q}}.$$
\end{lemma}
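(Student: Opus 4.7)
The plan is to reduce both bounds in the lemma to the single pointwise (in $n$) trilinear estimate
\begin{equation*}
\|Q^{1,t}_n(v_{n_1},\bar v_{n_2},v_{n_3})\|_{L^2} \lesssim \|v_{n_1}\|_2\,\|v_{n_2}\|_2\,\|v_{n_3}\|_2,
\end{equation*}
uniformly in $t$ and $n$. To obtain this, note that $e^{it\partial_x^2}$ is an $L^2$-isometry by \eqref{Sch} and $\Box_n$ is uniformly bounded on $L^2$, so H\"older's inequality gives
\begin{equation*}
\|Q^{1,t}_n(v_{n_1},\bar v_{n_2},v_{n_3})\|_2 \lesssim \|e^{-it\partial_x^2}v_{n_1}\|_\infty\,\|e^{it\partial_x^2}\bar v_{n_2}\|_\infty\,\|e^{-it\partial_x^2}v_{n_3}\|_2.
\end{equation*}
Each factor $e^{\pm it\partial_x^2}v_{n_j}$ and $e^{it\partial_x^2}\bar v_{n_2}$ has Fourier support in an interval of length one (a property preserved by the multiplier $e^{\pm it\xi^2}$), so the Bernstein-type bound \eqref{Bern1} controls the two $L^\infty$ factors by $\|v_{n_1}\|_2$ and $\|v_{n_2}\|_2$, while the $L^2$ factor equals $\|v_{n_3}\|_2$.

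For $R^t_1$, the constraints $n_1\approx n$, $n_3\approx n$ and $n_1-n_2+n_3\approx n$ force $n_2\approx n$ as well, so only $O(1)$ terms contribute. Applying the trilinear bound and the elementary inequality $\sum_k a_k^3 \leq (\sum_k a_k)^3$ for nonnegative sequences yields
\begin{equation*}
\|R^t_1(v)\|_{l^q L^2}^q \lesssim \sum_n \sum_{k\approx n}\|v_k\|_2^{3q} \lesssim \sum_k \|v_k\|_2^{3q} \leq \Big(\sum_k \|v_k\|_2^q\Big)^3 = \|v\|_{M_{2,q}}^{3q}.
\end{equation*}

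For $R^t_2$, in the piece with $n_1\approx n$ the constraint $n_1-n_2+n_3\approx n$ reduces to $n_3\approx n_2$, while $n_2$ remains free. The trilinear bound and Cauchy--Schwarz then give
\begin{equation*}
\|R^t_2(v)(n)\|_2 \lesssim \Big(\sum_{n_1\approx n}\|v_{n_1}\|_2\Big) \sum_{n_2\in\Z}\|v_{n_2}\|_2\sum_{n_3\approx n_2}\|v_{n_3}\|_2 \lesssim \Big(\sum_{n_1\approx n}\|v_{n_1}\|_2\Big)\sum_{n_2}\|v_{n_2}\|_2^2.
\end{equation*}
The inner sum equals $\|v\|_{M_{2,2}}^2$, and the embedding $l^q\hookrightarrow l^2$, valid precisely for $q\leq 2$, bounds it by $\|v\|_{M_{2,q}}^2$. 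Taking $l^q$ norm in $n$ converts the outer sum $\sum_{n_1\approx n}\|v_{n_1}\|_2$ into one more factor of $\|v\|_{M_{2,q}}$ by the translation invariance of the $l^q$ norm, and the symmetric piece $n_3\approx n$ is handled identically.

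The Lipschitz bound is a standard consequence of trilinearity: writing
\begin{equation*}
R^t_j(v)-R^t_j(w) = R^t_j(v-w,\bar v,v) + R^t_j(w,\overline{v-w},v) + R^t_j(w,\bar w,v-w),
\end{equation*}
the three terms are controlled by the above trilinear estimates with one argument replaced by $v-w$ and the others by $v$ or $w$. The main obstacle is the sharp appearance of $q\leq 2$, which enters exactly at the Cauchy--Schwarz step $\sum_{n_2}\|v_{n_2}\|_2\sum_{n_3\approx n_2}\|v_{n_3}\|_2 \lesssim \|v\|_{M_{2,2}}^2$ in the $R^t_2$ bound; this is the single point where $l^2$-summability is unavoidable and accounts for the restriction recorded in the statement of Theorem \ref{th1}.
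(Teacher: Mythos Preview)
Your proof is correct and follows essentially the same route as the paper: Schr\"odinger isometry plus Bernstein to reduce each $Q^{1,t}_n$ to a product of three $L^2$ norms, the observation that $R_1^t$ has only $O(1)$ terms per $n$, and for $R_2^t$ the Cauchy--Schwarz step $\sum_{n_2}\|v_{n_2}\|_2\|v_{n_3}\|_2\lesssim\|v\|_{M_{2,2}}^2$ followed by the embedding $l^q\hookrightarrow l^2$ for $q\le 2$. The only cosmetic difference is that you isolate the pointwise trilinear bound $\|Q^{1,t}_n(v_{n_1},\bar v_{n_2},v_{n_3})\|_2\lesssim\prod_j\|v_{n_j}\|_2$ up front (via H\"older exponents $(\infty,\infty,2)$) and then feed it into both $R_1^t$ and $R_2^t$, whereas the paper argues more ad hoc with exponents $(4,8,8)$ and treats the $R_2^t$ sum as a single object $e^{-it\partial_x^2}v_n\sum_{n_2}|e^{-it\partial_x^2}v_{n_2}|^2$ before applying H\"older; the substance is identical.
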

\begin{proof}
Let us start with $R^{t}_{1}.$ By definition for fixed $n$, $R^{t}_{1}(n)$ consists of finitely many summands, since $|n-n_{1}|, |n-n_{3}|\leq 1$ and $|n-n_{2}|\leq 3$. We will handle $Q^{1,t}_{n}(v_{n},\bar{v}_{n},v_{n})$ and the remaining summands can be treated similarly. Since,
$$Q^{1,t}_{n}(v_{n},\bar{v}_{n},v_{n})=\Box_{n}\Big(e^{it\partial_{x}^{2}}[e^{-it\partial_{x}^{2}}v_{n}\cdot e^{it\partial_{x}^{2}}\bar{v}_{n}\cdot e^{-it\partial_{x}^{2}}v_{n}]\Big),$$
and since the Schr\"odinger operator is an isometry on $L^{2}$ our claim follows by Bernstein's inequality (see Lemma \ref{Bern}). For the difference  $R_{1}^{t}(v)-R_{1}^{t}(w)$ we have to estimate terms of the form $|e^{-it\partial_{x}^{2}}v_{n}|^{2}|e^{-it\partial_{x}^{2}}v_{n}-e^{-it\partial_{x}^{2}}w_{n}|$ in the $l^{q}L^{2}$ norm. For the $L^{2}$ norm we apply H\"older's inequality and obtain the upper bound
$$\|e^{-it\partial_{x}^{2}}v_{n}\|_{8}^{2}\|e^{-it\partial_{x}^{2}}v_{n}-e^{-it\partial_{x}^{2}}w_{n}\|_{4}\lesssim\|e^{-it\partial_{x}^{2}}v_{n}\|_{2}^{2}\|e^{-it\partial_{x}^{2}}v_{n}-e^{-it\partial_{x}^{2}}w_{n}\|_{2}=\|v_{n}\|_{2}^{2}\|v_{n}-w_{n}\|_{2},$$
where we used (\ref{Bern1}) and (\ref{Sch}), and then proceed with the $l^{q}$ norm as
$$\Big(\sum_{n\in\Z}\|v_{n}\|_{2}^{2q}\|v_{n}-w_{n}\|_{2}^{q}\Big)^{\frac1{q}}\leq\Big(\sup_{n\in\Z}\|v_{n}\|_{2}^{2}\Big)\Big(\sum_{n\in\Z}\|v_{n}-w_{n}\|_{2}^{q}\Big)^{\frac1{q}}=\|v\|_{M_{2,\infty}}^{2}\|v-w\|_{M_{2,q}}.$$
From (\ref{yeye233}) we have $\|v\|_{M_{2,\infty}}\leq\|v\|_{M_{2,q}}$ which finishes the proof. Similar considerations apply to all other lemmata of the paper where estimates of the same form appear. 

For the $R_{2}^{t}$ operator, it suffices to estimate the sum
$$\sum_{\substack{n_{1}-n_{2}+n_{3}\approx n\\ n_{1}\approx n}}Q^{t}_{n}(v_{n_{1}},\bar{v}_{n_{2}},v_{n_{3}})$$
which consists of finitely many sums depending on whether $n_{1}=n-1$, or $n_{1}=n$, or $n_{1}=n+1.$ Let us only treat 
$$\Box_{n}\ e^{it\partial_{x}^{2}}\ \Big(e^{-it\partial_{x}^{2}}v_{n}\sum_{n_{2}\in\Z}|e^{-it\partial_{x}^{2}}v_{n_{2}}|^{2}\Big),$$
since for the remaining sums similar considerations apply. The $L^{2}$ norm equals
$$\Big\|\Box_{n} \Big(u_{n}\sum_{n_{2}\in\Z}|u_{n_{2}}|^{2}\Big)\Big\|_{2}\lesssim\Big\|u_{n}\sum_{n_{2}\in\Z}|u_{n_{2}}|^{2}\Big\|_{2}\leq\sum_{n_{2}\in\Z}\Big\|u_{n}|u_{n_{2}}|^{2}\Big\|_{2}\leq\sum_{n_{2}\in\Z}\|u_{n}\|_{4}\|u_{n_{2}}\|_{8}^{2},$$
where we used that the Schr\"odinger operator is an isometry in $L^{2}$, Lemma \ref{Bern} and H\"older's inequality. With the use of (\ref{Bern1}) this last sum is bounded from above by $\|u_{n}\|_{2}\|u\|_{M_{2,2}}^{2}$ and since $1\leq q\leq2$ we can use the embedding $l^{q}\hookrightarrow l^{2}$ to arrive at $\|u_{n}\|_{2}\|u\|_{M_{2,q}}^{2}.$ Then, the $l^{q}$ norm in the discrete variable implies
$$\|R_{2}^{t}(v)\|_{l^{q}L^{2}}\lesssim\|v\|_{M_{2,q}}^{3}.$$
\end{proof}
For the non-resonant part $N_{1}^{t}$ we have to split as
\begin{equation}
\label{main13}
N_{1}^{t}(v)(n)=N_{11}^{t}(v)(n)+N_{12}^{t}(v)(n),
\end{equation}
where 
$$N_{11}^{t}(v)(n)=\sum_{A_{N}(n)}Q^{1,t}_{n}(v_{n_{1}},\bar{v}_{n_{2}},v_{n_{3}}),$$
and 
\begin{equation}
\label{set1}
A_{N}(n)=\{(n_{1},n_{2},n_{3})\in\mathbb Z^3:n_{1}-n_{2}+n_{3}\approx n, n_{1}\not\approx n\not\approx n_{3}, |\Phi(n,n_{1},n_{2},n_{3})|\leq N\}.
\end{equation}
We also define the set
\begin{equation}
\label{idid}
A_{N}(n)^{c}=\{(n_{1},n_{2},n_{3})\in\mathbb Z^3:n_{1}-n_{2}+n_{3}\approx n, n_{1}\not\approx n\not\approx n_{3}, |\Phi(n,n_{1},n_{2},n_{3})|> N\}.
\end{equation}
The number $N>0$ is considered to be large and will be fixed at the end of the proof. With the use of inequality (\ref{num}) we estimate $N_{11}^{t}$ as follows:

\begin{lemma}
\label{lemle}
$$\|N_{11}^{t}(v)\|_{l^{q}L^{2}}\lesssim N^{\frac1{q'}+}\|v\|^{3}_{M_{2,q}},$$
and
$$\|N_{11}^{t}(v)-N_{11}^{t}(w)\|_{l^{q}L^{2}}\lesssim N^{\frac1{q'}+}(\|v\|^{2}_{M_{2,q}}+\|w\|^{2}_{M_{2,q}})\|v-w\|_{M_{2,q}}.$$
\end{lemma}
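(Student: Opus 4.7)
The plan is to reduce the $L^{2}$ norm of each operator summand $Q^{1,t}_{n}(v_{n_{1}},\bar v_{n_{2}},v_{n_{3}})$ to the product of three scalar factors $\|v_{n_{j}}\|_{L^{2}}$, exactly as in the proof of Lemma \ref{lem}, and then to absorb the whole sum over $A_{N}(n)$ into a single Hölder step whose cost is $|A_{N}(n)|^{1/q'}$. The factorization $\Phi(n,n_{1},n_{2},n_{3})=2(n-n_{1})(n-n_{3})$, together with the divisor bound (\ref{num}), will turn the restriction $|\Phi|\leq N$ into $|A_{N}(n)|\lesssim N^{1+}$, producing exactly the factor $N^{1/q'+}$ after the $1/q'$-th power is taken.

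First I would treat a single summand. Since $\Box_{n}$ is bounded on $L^{2}$ uniformly in $n$ and $e^{\pm it\partial_{x}^{2}}$ is an $L^{2}$-isometry by (\ref{Sch}), one is reduced to estimating $\|u_{1}u_{2}u_{3}\|_{L^{2}}$ where each $u_{j}=e^{\pm it\partial_{x}^{2}}v_{n_{j}}$ has Fourier support in $[n_{j},n_{j}+1]$. Hölder with exponents $(6,6,6)$ followed by the Bernstein-type inequality (\ref{Bern1}) applied to each factor gives
$$\|Q^{1,t}_{n}(v_{n_{1}},\bar v_{n_{2}},v_{n_{3}})\|_{L^{2}}\lesssim \|v_{n_{1}}\|_{L^{2}}\|v_{n_{2}}\|_{L^{2}}\|v_{n_{3}}\|_{L^{2}}.$$
Writing $f_{n}:=\|v_{n}\|_{L^{2}}$ and applying Hölder in the sum over $A_{N}(n)$ with exponents $q'$ and $q$,
$$\|N_{11}^{t}(v)(n)\|_{L^{2}}\leq\sum_{A_{N}(n)}f_{n_{1}}f_{n_{2}}f_{n_{3}}\leq |A_{N}(n)|^{1/q'}\Big(\sum_{A_{N}(n)}(f_{n_{1}}f_{n_{2}}f_{n_{3}})^{q}\Big)^{1/q}.$$
To count $|A_{N}(n)|$, I parametrize by $a=n-n_{1}$, $b=n-n_{3}$ (so that $n_{2}$ is determined up to an $O(1)$ shift by the constraint $n_{1}-n_{2}+n_{3}\approx n$); the non-resonance conditions force $|a|,|b|\geq 2$, and the cutoff $|\Phi|=2|ab|\leq N$ combined with (\ref{num}) yields
$$|A_{N}(n)|\lesssim \sum_{1\leq m\leq N/2}d(m)\lesssim N^{1+}$$
uniformly in $n$. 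Taking the $l^{q}$ norm in $n$ and interchanging the order of summation, the relation $n\approx n_{1}-n_{2}+n_{3}$ confines $n$ to $O(1)$ values once $(n_{1},n_{2},n_{3})$ is fixed, so
$$\sum_{n}\sum_{A_{N}(n)}(f_{n_{1}}f_{n_{2}}f_{n_{3}})^{q}\lesssim \sum_{n_{1},n_{2},n_{3}\in\Z}f_{n_{1}}^{q}f_{n_{2}}^{q}f_{n_{3}}^{q}=\|v\|_{M_{2,q}}^{3q}.$$
Combining the three displays produces the desired bound $\|N_{11}^{t}(v)\|_{l^{q}L^{2}}\lesssim N^{1/q'+}\|v\|_{M_{2,q}}^{3}$.

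For the Lipschitz estimate I would use the multilinear telescoping identity
$$v_{n_{1}}\bar v_{n_{2}}v_{n_{3}}-w_{n_{1}}\bar w_{n_{2}}w_{n_{3}}=(v-w)_{n_{1}}\bar v_{n_{2}}v_{n_{3}}+w_{n_{1}}\overline{(v-w)}_{n_{2}}v_{n_{3}}+w_{n_{1}}\bar w_{n_{2}}(v-w)_{n_{3}},$$
apply the preceding single-term estimate to each of the three trilinear pieces, and repeat the counting step verbatim; this produces the factor $(\|v\|_{M_{2,q}}^{2}+\|w\|_{M_{2,q}}^{2})\|v-w\|_{M_{2,q}}$. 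The main obstacle is the counting of $A_{N}(n)$: this is the only place where the divisor bound (\ref{num}) is invoked, and it is the mechanism by which the $N$-dependence is pushed below the trivial $N^{1+}$ count coming from the plain description $n_{1}-n_{2}+n_{3}\approx n$; the rest of the argument is a sequence of standard Hölder and Bernstein manipulations entirely parallel to Lemma \ref{lem}.
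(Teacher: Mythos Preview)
Your proof is correct and follows essentially the same approach as the paper: reduce each $Q^{1,t}_{n}$ term to $\|v_{n_{1}}\|_{2}\|v_{n_{2}}\|_{2}\|v_{n_{3}}\|_{2}$ via H\"older with exponents $(6,6,6)$ and Bernstein, apply discrete H\"older over $A_{N}(n)$, and count $|A_{N}(n)|\lesssim N^{1+}$ using the divisor bound (\ref{num}). The only cosmetic difference is that where the paper invokes Young's inequality to close the $l^{q}$ sum, you instead interchange the order of summation and use that $n$ is determined up to $O(1)$ by $(n_{1},n_{2},n_{3})$; these are equivalent here.
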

\begin{proof}
Obviously, 
$$\|N_{11}^{t}(v)\|_{L^{2}}\leq\sum_{A_{N}(n)}\|Q^{1,t}_{n}(v_{n_{1}},\bar{v}_{n_{2}},v_{n_{3}})\|_{L^{2}},$$
which from (\ref{Sch}), Lemma \ref{Bern} and H\"older's inequality is estimated above by
$$\sum_{A_{N}(n)}\|u_{n_{1}}\bar{u}_{n_{2}}u_{n_{3}}\|_{L^{2}}\leq\sum_{A_{N}(n)}\|u_{n_{1}}\|_{L^{6}}\|u_{n_{2}}\|_{L^{6}}\|u_{n_{3}}\|_{L^{6}}.$$
Here we make use of (\ref{Bern1}) and H\"older's inequality in the discrete variable to obtain the upper bound
$$\sum_{A_{N}(n)}\|u_{n_{1}}\|_{L^{2}}\|u_{n_{2}}\|_{L^{2}}\|u_{n_{3}}\|_{L^{2}}\leq\Big(\sum_{A_{N}(n)}1^{q'}\Big)^{\frac1{q'}}\Big(\sum_{A_{N}(n)}\|u_{n_{1}}\|_{L^{2}}^{q}\|u_{n_{2}}\|_{L^{2}}^{q}\|u_{n_{3}}\|_{L^{2}}^{q}\Big)^{\frac1{q}}.$$
Fix $n$ and $\mu\in\mathbb Z$ such that $|\mu|\leq N$. From (\ref{num}) there are at most $o(N^{0+})$ many choices for $n_{1}$ and $n_{3}$, and so for $n_{2}$ from $n\approx n_{1}-n_{2}+n_{3}$, satisfying 
$$\mu=2(n-n_{1})(n-n_{3}).$$
Therefore, we arrive at
$$\|N_{11}^{t}(v)\|_{l^{q}L^{2}}\lesssim N^{\frac1{q'}+}\Big(\sum_{n\in\mathbb Z}\sum_{A_{N}(n)}\|u_{n_{1}}\|_{L^{2}}^{q}\|u_{n_{2}}\|_{L^{2}}^{q}\|u_{n_{3}}\|_{L^{2}}^{q}\Big)^{\frac1{q}},$$
and this final summation is estimated by Young's inequality providing us with the bound ($\|u\|_{M_{2,q}}=\|v\|_{M_{2,q}}$)
$$\|N_{11}^{t}(v)\|_{l^{q}L^{2}}\lesssim N^{\frac1{q'}+}\|v\|_{M_{2,q}}^{3}.$$
\end{proof}
In order to continue, we have to look at the $N_{12}^{t}$ part more closely keeping in mind that we are on $A_{N}(n)^{c}$. Our goal is to find a suitable splitting in order to continue our iteration. In the following we perform all formal calculations assuming that $v$ is a sufficiently smooth solution. Later, in Subsection $2.4$ we justify these formal computations also for $v\in C([0,T], M_{2,q}^{s}(\R))$, with $1\leq q\leq\frac32$, $s\geq 0$ or $\frac32<q\leq2$, $s>\frac23-\frac1{q}$. 

From (\ref{main4}) we know that 
$$\mathcal F(Q^{1,t}_{n}(v_{n_{1}},\bar{v}_{n_{2}},v_{n_{3}}))(\xi)=\sigma_{n}(\xi)\int_{\mathbb R^2}e^{-2it(\xi-\xi_{1})(\xi-\xi_{3})}\hat{v}_{n_{1}}(\xi_{1})\hat{\bar{v}}_{n_{2}}(\xi-\xi_{1}-\xi_{3})\hat{v}_{n_{3}}(\xi_{3})\ d\xi_{1}d\xi_{3},$$
and by the usual product rule for the derivative we can write the previous integral as the sum of the following expressions 
\begin{equation}
\label{ttr}
\partial_{t}\Big(\sigma_{n}(\xi)\int_{\mathbb R^2}\frac{e^{-2it(\xi-\xi_{1})(\xi-\xi_{3})}}{-2i(\xi-\xi_{1})(\xi-\xi_{3})}\ \hat{v}_{n_{1}}(\xi_{1})\hat{\bar{v}}_{n_{2}}(\xi-\xi_{1}-\xi_{3})\hat{v}_{n_{3}}(\xi_{3})\ d\xi_{1}d\xi_{3}\Big)-
\end{equation}
$$\sigma_{n}(\xi)\int_{\mathbb R^2}\frac{e^{-2it(\xi-\xi_{1})(\xi-\xi_{3})}}{-2i(\xi-\xi_{1})(\xi-\xi_{3})}\partial_{t}\Big(\hat{v}_{n_{1}}(\xi_{1})\hat{\bar{v}}_{n_{2}}(\xi-\xi_{1}-\xi_{3})\hat{v}_{n_{3}}(\xi_{3})\Big)\ d\xi_{1}d\xi_{3}.$$ 
Therefore, we have the splitting 
\begin{equation}
\label{main5}
\mathcal F(Q^{1,t}_{n})=\partial_{t}\mathcal F(\tilde{Q}^{1,t}_{n})-\mathcal F(T^{1,t}_{n})
\end{equation}
or equivalently
\begin{equation}
\label{main6}
Q^{1,t}_{n}(v_{n_{1}},\bar{v}_{n_{2}},v_{n_{3}})=\partial_{t}(\tilde{Q}^{1,t}_{n}(v_{n_{1}},\bar{v}_{n_{2}},v_{n_{3}}))-T^{1,t}_{n}(v_{n_{1}},\bar{v}_{n_{2}},v_{n_{3}}),
\end{equation}
which allows us to write 
\begin{equation}
\label{nex}
N_{12}^{t}(v)(n)=\partial_{t}(N_{21}^{t}(v)(n))+N_{22}^{t}(v)(n),
\end{equation}
where
\begin{equation}
\label{nex1}
N_{21}^{t}(v)(n)=\sum_{A_{N}(n)^{c}}\tilde{Q}^{1,t}_{n}(v_{n_{1}},\bar{v}_{n_{2}},v_{n_{3}}),
\end{equation}
and
\begin{equation}
\label{nex2}
N_{22}^{t}(v)(n)=\sum_{A_{N}(n)^{c}}T_{n}^{1,t}(v_{n_{1}},\bar{v}_{n_{2}},v_{n_{3}}).
\end{equation}
Moreover, we have 
$$\mathcal F(\tilde{Q}^{1,t}_{n}(v_{n_{1}},\bar{v}_{n_{2}},v_{n_{3}}))(\xi)=e^{-it\xi^{2}}\sigma_{n}(\xi)\int_{\mathbb R^2}\frac{\hat{u}_{n_{1}}(\xi_{1})\hat{\bar{u}}_{n_{2}}(\xi-\xi_{1}-\xi_{3})\hat{u}_{n_{3}}(\xi_{3})}{(\xi-\xi_{1})(\xi-\xi_{3})}\ d\xi_{1}d\xi_{3},$$
and we define 
\begin{equation}
\label{rr}
\mathcal F(R^{1,t}_{n}(u_{n_{1}},\bar{u}_{n_{2}},u_{n_{3}}))(\xi)=\sigma_{n}(\xi)\int_{\mathbb R^2}\frac{\hat{u}_{n_{1}}(\xi_{1})\hat{\bar{u}}_{n_{2}}(\xi-\xi_{1}-\xi_{3})\hat{u}_{n_{3}}(\xi_{3})}{(\xi-\xi_{1})(\xi-\xi_{3})}\ d\xi_{1}d\xi_{3},
\end{equation}
which is the same as the operator
\begin{equation}
\label{rr1}
R^{1,t}_{n}(w_{n_{1}},\bar{w}_{n_{2}},w_{n_{3}})(x)=\int_{\mathbb R^3}e^{ix\xi}\sigma_{n}(\xi)\frac{\hat{w}_{n_{1}}(\xi_{1})\hat{\bar{w}}_{n_{2}}(\xi-\xi_{1}-\xi_{3})\hat{w}_{n_{3}}(\xi_{3})}{(\xi-\xi_{1})(\xi-\xi_{3})}\ d\xi_{1}d\xi_{3}d\xi.
\end{equation}
Writing out the Fourier transforms of the functions inside the integral it is not difficult to see that
\begin{equation}
\label{main7}
R^{1,t}_{n}(w_{n_{1}},\bar{w}_{n_{2}},w_{n_{3}})(x)=\int_{\mathbb R^3}K^{(1)}_{n}(x,x_{1},y,x_{3})w_{n_{1}}(x)\bar{w}_{n_{2}}(y)w_{n_{3}}(x_{3})\ dx_{1}dydx_{3},
\end{equation}
where
$$K^{(1)}_{n}(x,x_{1},y,x_{3})=\int_{\mathbb R^3}e^{i\xi_{1}(x-x_{1})+i\eta(x-y)+i\xi_{3}(x-x_{3})}\ \frac{\sigma_{n}(\xi_{1}+\eta+\xi_{3})}{(\eta+\xi_{1})(\eta+\xi_{3})}\ d\xi_{1}d\eta d\xi_{3}=$$
$$\mathcal F^{-1}\rho^{(1)}_{n}(x-x_{1},x-y,x-x_{3})$$
and 
$$\rho_{n}^{(1)}(\xi_{1},\eta,\xi_{3})=\frac{\sigma_{n}(\xi_{1}+\eta+\xi_{3})}{(\eta+\xi_{1})(\eta+\xi_{3})}.$$
The important estimate that the operator $\tilde Q^{1,t}_{n}$ satisfies is described in:

\begin{lemma}
\label{fir}
\begin{equation}
\|\tilde{Q}^{1,t}_{n}(v_{n_{1}},\bar{v}_{n_{2}},v_{n_{3}})\|_{2}\lesssim\frac{\|v_{n_{1}}\|_{2}\|v_{n_{2}}\|_{2}\|v_{n_{3}}\|_{2}}{|n-n_{1}||n-n_{3}|}.
\end{equation}
\end{lemma}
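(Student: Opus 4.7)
The plan is to work on the Fourier side via Plancherel. Since
\eq{\mathcal F(\tilde{Q}^{1,t}_{n}(v_{n_{1}},\bar{v}_{n_{2}},v_{n_{3}}))(\xi)=e^{-it\xi^{2}}\sigma_{n}(\xi)\int_{\mathbb R^2}\frac{\hat{u}_{n_{1}}(\xi_{1})\hat{\bar{u}}_{n_{2}}(\xi-\xi_{1}-\xi_{3})\hat{u}_{n_{3}}(\xi_{3})}{(\xi-\xi_{1})(\xi-\xi_{3})}\ d\xi_{1}d\xi_{3},}
the unimodular factor $e^{-it\xi^{2}}$ drops out of the $L^{2}$ norm, so it suffices to bound the $L^{2}_{\xi}$ norm of the expression obtained by removing this factor.

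The first step is to take advantage of the frequency localization. On the support of the integrand one has $\xi \in [n,n{+}1)$, $\xi_{1}\in[n_{1},n_{1}{+}1)$, $\xi_{3}\in[n_{3},n_{3}{+}1)$, hence $\xi-\xi_{1}\in(n-n_{1}-1,n-n_{1}+1)$ and similarly for $\xi-\xi_{3}$. Since $\tilde{Q}^{1,t}_{n}$ is only evaluated on the non-resonant set $A_{N}(n)^{c}$, we have $|n-n_{1}|\geq 2$ and $|n-n_{3}|\geq 2$, so
\eq{|\xi-\xi_{1}|\gtrsim|n-n_{1}|,\qquad |\xi-\xi_{3}|\gtrsim|n-n_{3}|,}
uniformly on the region of integration. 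Pulling these lower bounds out as constants gives
\eq{\|\tilde{Q}^{1,t}_{n}(v_{n_{1}},\bar{v}_{n_{2}},v_{n_{3}})\|_{2}\lesssim\frac{1}{|n-n_{1}||n-n_{3}|}\,\Bigl\|\sigma_{n}(\xi)\int_{\mathbb R^{2}}\hat{u}_{n_{1}}(\xi_{1})\hat{\bar{u}}_{n_{2}}(\xi-\xi_{1}-\xi_{3})\hat{u}_{n_{3}}(\xi_{3})\,d\xi_{1}d\xi_{3}\Bigr\|_{L^{2}_{\xi}}.}

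For the remaining factor, recognize the double integral as a convolution, so by Plancherel the quantity inside the norm is nothing other than $\mathcal F\bigl(\Box_{n}(u_{n_{1}}\bar{u}_{n_{2}}u_{n_{3}})\bigr)(\xi)$. Therefore its $L^{2}_{\xi}$ norm equals $\|\Box_{n}(u_{n_{1}}\bar{u}_{n_{2}}u_{n_{3}})\|_{2}$, which by Lemma \ref{Bern} is controlled by $\|u_{n_{1}}\bar{u}_{n_{2}}u_{n_{3}}\|_{2}$. Applying H\"older's inequality with exponent $6$ on each factor, followed by the Bernstein-type estimate (\ref{Bern1}) to replace each $L^{6}$ norm by an $L^{2}$ norm, and finally using the $L^{2}$ isometry (\ref{Sch}) to write $\|u_{n_{j}}\|_{2}=\|v_{n_{j}}\|_{2}$, produces the required bound $\|v_{n_{1}}\|_{2}\|v_{n_{2}}\|_{2}\|v_{n_{3}}\|_{2}$ on top.

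The only real subtlety is the first step: one must be careful that the denominator $(\xi-\xi_{1})(\xi-\xi_{3})$, which is singular in general, is safely bounded away from zero on the relevant frequency region because of the non-resonance restriction $n_{1}\not\approx n\not\approx n_{3}$. Once this is observed, the rest is pointwise pulling out of constants followed by standard paraproduct-style estimates.
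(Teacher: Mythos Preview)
Your argument is essentially correct and reaches the same conclusion, but by a different route than the paper. One small sloppiness: when you ``pull out'' the lower bound on $|(\xi-\xi_1)(\xi-\xi_3)|$, you must first take absolute values inside the $\xi_1,\xi_3$ integral, so the remaining factor is
\eq{\Bigl\|\sigma_{n}(\xi)\int_{\R^{2}}|\hat{u}_{n_{1}}(\xi_{1})|\,|\hat{\bar{u}}_{n_{2}}(\xi-\xi_{1}-\xi_{3})|\,|\hat{u}_{n_{3}}(\xi_{3})|\,d\xi_{1}d\xi_{3}\Bigr\|_{L^{2}_{\xi}},}
which is not literally $\mathcal F\bigl(\Box_{n}(u_{n_{1}}\bar u_{n_{2}}u_{n_{3}})\bigr)$. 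The fix is immediate: replace each $u_{n_{j}}$ by $f_{j}:=\mathcal F^{-1}|\hat u_{n_{j}}|$, which has the same $L^{2}$ norm and the same frequency support, so your H\"older/Bernstein chain applies verbatim to the $f_{j}$.

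The paper instead argues by duality: it pairs $R^{1,t}_{n}$ with a test function $g\in L^{2}$, changes variables, and applies H\"older to extract $\|\rho^{(1)}_{n}\|_{\infty}$ together with the three $L^{2}$ norms, then estimates the remaining integral of $|\hat g|^{2}$ over a unit cube. Your direct approach is arguably more elementary for this first-generation lemma and ties in nicely with the estimate already used for $N_{11}^{t}$. The paper's duality argument, on the other hand, is formulated so that it generalises without change to the $J$th-generation operators $\tilde q^{J,t}_{T^{0},\mathbf n}$ (Lemma~\ref{indu}): there one simply replaces $\rho^{(1)}_{n}$ by $\rho^{(J)}_{T^{0},\mathbf n}$ and reads off the bound $1/|\hat\mu_{J}|$, whereas extending your product-in-physical-space viewpoint to those higher operators is less transparent.
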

\begin{proof}
Observing that $\mathcal F(\tilde{Q}^{1,t}_{n}(v_{n_{1}},\bar{v}_{n_{2}},v_{n_{3}}))(\xi)=e^{-it\xi^{2}}\mathcal F(R^{1,t}_{n}(v_{n_{1}},\bar{v}_{n_{2}},v_{n_{3}}))(\xi)$ it suffices to estimate the $L^{2}$ norm of the operator $R^{1,t}_{n}$. By duality, let $g\in L^2$, $\|g\|_{2}\neq0$, and consider the pairing
\begin{equation}
\label{thg}
|\langle R^{1,t}_{n}(v_{n_{1}},\bar{v}_{n_{2}},v_{n_{3}}), g\rangle|=\Big|\int_{\R}\mathcal F(R^{1,t}_{n}(v_{n_{1}},\bar{v}_{n_{2}},v_{n_{3}}))(\xi)\mathcal F(g)(\xi)\ d\xi\Big|=\end{equation}
$$\Big|\int_{\R^3}\hat{g}(\xi)\ \sigma_{n}(\xi)\ \frac{\hat{v}_{n_{1}}(\xi_{1})\hat{\bar{v}}_{n_{2}}(\xi-\xi_{1}-\xi_{3})\hat{v}_{n_{3}}(\xi_{3})}{(\xi-\xi_{1})(\xi-\xi_{3})}\ d\xi d\xi_{1} d\xi_{3}\Big|=$$
$$\Big|\int_{\R^3}\hat{g}(\xi_{1}+\eta+\xi_{3})\ \frac{\sigma_{n}(\xi_{1}+\eta+\xi_{3})}{(\eta+\xi_{1})(\eta+\xi_{3})}\ \hat{v}_{n_{1}}(\xi_{1})\hat{\bar{v}}_{n_{2}}(\eta)\hat{v}_{n_{3}}(\xi_{3})\ d\eta d\xi_{1} d\xi_{3}\Big|=$$
$$\Big|\int_{I_{n_{1}}}\int_{I_{n_{2}}}\int_{I_{n_{3}}}\hat{g}(\xi_{1}+\eta+\xi_{3})\ \rho^{(1)}_{n}(\xi_{1},\eta,\xi_{3})\ \hat{v}_{n_{1}}(\xi_{1})\hat{\bar{v}}_{n_{2}}(\eta)\hat{v}_{n_{3}}(\xi_{3})\ d\xi_{1} d\eta d\xi_{3}\Big|,$$
where these three intervals are the compact supports of the functions $\hat{v}_{n_{1}},\hat{\bar{v}}_{n_{2}},\hat{v}_{n_{3}}$ (see (\ref{ww})). By H\"older's inequality we obtain the upper bound
$$\|\rho^{(1)}_{n}\|_{\infty}\|v_{n_{1}}\|_{2}\|v_{n_{2}}\|_{2}\|v_{n_{3}}\|_{2}\Big(\int_{I_{n_{1}}}\int_{I_{n_{2}}}\int_{I_{n_{3}}}|\hat{g}(\xi_{1}+\eta+\xi_{3})|^{2}\ d\xi_{1} d\eta d\xi_{3}\Big)^{\frac12},$$
and the last triple integral is easily estimated by
$$\|\hat{g}\|_{2}\ (|I_{n_{2}}||I_{n_{3}}|)^{\frac12}=\|g\|_{2}\ (|I_{n_{2}}||I_{n_{3}}|)^{\frac12}.$$
Therefore, the following is true
$$\|\tilde{Q}^{1,t}_{n}(v_{n_{1}},\bar{v}_{n_{2}},v_{n_{3}}))\|_{2}=\|R^{1,t}_{n}(v_{n_{1}},\bar{v}_{n_{2}},v_{n_{3}}))\|_{2}\lesssim\|\rho_{n}^{(1)}\|_{\infty}\|v_{n_{1}}\|_{2}\|v_{n_{2}}\|_{2}\|v_{n_{3}}\|_{2},$$
and since $\xi_{1}\approx n_{1}$, $\eta\approx-n_{2}$ and $\xi_{3}\approx n_{3}$ we obtain
$$\|\rho_{n}^{(1)}\|_{\infty}\lesssim\frac1{|n-n_{1}||n-n_{3}|},$$
which finishes the proof.
\end{proof}

\begin{remark}
\label{expl}
Notice that Lemma \ref{fir} (this observation applies to Lemma \ref{indu} too) is true for any triple of functions $f,g,h$ that lie in $M_{2,q}(\R)$ and the only important property is that they are nicely localised on the Fourier side since we consider their box operators $\Box_{n_{1}}f, \Box_{n_{2}}g$ and $\Box_{n_{3}}h.$ Also, the same proof implies that the operator $Q^{1,t}_{n}(v_{n_{1}},\bar{v}_{n_{2}},v_{n_{3}})$ satisfies the estimate
\begin{equation}
\label{imppo}
\|Q^{1,t}_{n}(v_{n_{1}},\bar{v}_{n_{2}},v_{n_{3}})\|_{2}\lesssim\|v_{n_{1}}\|_{2}\|v_{n_{2}}\|_{2}\|v_{n_{3}}\|_{2}.
\end{equation}
These observations will play an important role in Lemma \ref{dankda} of Subsection $2.3$ and Lemma \ref{finafinafina} of Subsection $2.4$.
\end{remark}
Here is the estimate for the $N_{21}^{t}$ operator:

\begin{lemma}
\label{fir1}
$$\|N_{21}^{t}(v)\|_{l^{q}L^{2}}\lesssim N^{\frac1{q'}-1+}\|v\|^{3}_{M_{2,q}},$$
and
$$\|N_{21}^{t}(v)-N_{21}^{t}(w)\|_{l^{q}L^{2}}\lesssim N^{\frac1{q'}-1+}(\|v\|_{M_{2,q}}^{2}+\|w\|_{M_{2,q}}^{2})\|v-w\|_{M_{2,q}}.$$
\end{lemma}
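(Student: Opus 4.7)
The plan is to apply Lemma \ref{fir} pointwise, reducing matters to a divisor-weighted counting estimate. After the triangle inequality in $L^2$,
\begin{equation*}
\|N_{21}^{t}(v)(n)\|_{2}\le \sum_{A_{N}(n)^{c}}\frac{\|v_{n_{1}}\|_{2}\|v_{n_{2}}\|_{2}\|v_{n_{3}}\|_{2}}{|n-n_{1}|\,|n-n_{3}|}.
\end{equation*}
I split the summand as $(|n-n_{1}||n-n_{3}|)^{-1}\cdot\|v_{n_{1}}\|_{2}\|v_{n_{2}}\|_{2}\|v_{n_{3}}\|_{2}$ and apply H\"older's inequality in $(n_{1},n_{2},n_{3})$ with exponents $(q',q)$. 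This reduces the problem to two subtasks: (i) bounding the weight
\begin{equation*}
S(n):=\Big(\sum_{A_{N}(n)^{c}}\frac{1}{(|n-n_{1}||n-n_{3}|)^{q'}}\Big)^{1/q'}
\end{equation*}
uniformly in $n$, and (ii) controlling the product-of-norms factor in $\ell^{q}$.

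For (i), the constraint $n_{1}-n_{2}+n_{3}\approx n$ pins $n_{2}$ in a bounded set once $(n_{1},n_{3})$ is chosen, so the sum over $n_{2}$ costs only an absolute constant. Setting $\mu=\Phi(n,n_{1},n_{2},n_{3})=2(n-n_{1})(n-n_{3})$, we have $|\mu|>N$ on $A_{N}(n)^{c}$ and $|n-n_{1}||n-n_{3}|=|\mu|/2$. The divisor bound \eqref{num} yields $o(|\mu|^{0+})$ admissible pairs $(n-n_{1},n-n_{3})$ for each such $\mu$, so
\begin{equation*}
\sum_{A_{N}(n)^{c}}\frac{1}{(|n-n_{1}||n-n_{3}|)^{q'}}\lesssim \sum_{|\mu|>N}\frac{|\mu|^{0+}}{|\mu|^{q'}}\lesssim N^{1-q'+},
\end{equation*}
where the tail sum converges thanks to $q'>1$. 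Taking the $1/q'$-th power gives $S(n)\lesssim N^{1/q'-1+}$ uniformly in $n$.

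For (ii), swapping the $\sum_{n\in\Z}$ with $\sum_{A_{N}(n)^{c}}$ and using that for each fixed $(n_{1},n_{2},n_{3})$ only $O(1)$ values of $n$ satisfy $n_{1}-n_{2}+n_{3}\approx n$, I obtain
\begin{equation*}
\Big(\sum_{n\in\Z}\sum_{A_{N}(n)^{c}}\|v_{n_{1}}\|_{2}^{q}\|v_{n_{2}}\|_{2}^{q}\|v_{n_{3}}\|_{2}^{q}\Big)^{1/q}\lesssim \|v\|_{M_{2,q}}^{3},
\end{equation*}
which combined with (i) produces the first estimate. The Lipschitz bound follows from the trilinearity of $\tilde{Q}^{1,t}_{n}$ in its three arguments: writing $v_{n_{1}}\bar v_{n_{2}}v_{n_{3}}-w_{n_{1}}\bar w_{n_{2}}w_{n_{3}}$ as the standard telescoping sum of three terms, each carrying exactly one factor of the form $v_{n_{k}}-w_{n_{k}}$ and two factors from $v$ or $w$, and repeating the three-step argument on each. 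I expect the main delicate point to be tracking the $0+$ loss through (i): the divisor-weighted tail $\sum_{|\mu|>N}d(|\mu|)/|\mu|^{q'}$ diverges as $q\to\infty$, and this both forces the restriction $q<\infty$ and makes the $+\epsilon$ in the exponent $N^{1/q'-1+}$ genuinely unavoidable.
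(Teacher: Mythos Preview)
Your proof is correct and follows essentially the same route as the paper: apply Lemma~\ref{fir}, split via H\"older with exponents $(q',q)$, handle the weight sum by the divisor bound~\eqref{num}, and close with a convolution/counting argument for the product-of-norms factor. The only cosmetic difference is that in step~(ii) the paper invokes Young's inequality for the discrete convolution, while you swap the sums and use that each $(n_{1},n_{2},n_{3})$ determines at most $O(1)$ values of $n$; these are equivalent here.
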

\begin{proof}
From Lemma \ref{fir} we have
$$\|N_{21}^{t}(v)\|_{2}\leq\sum_{A_{N}(n)^{c}}\|\tilde{Q}^{1,t}_{n}(v_{n_{1}},\bar{v}_{n_{2}},v_{n_{3}})\|_{2}\lesssim\sum_{A_{N}(n)^{c}}\frac{\|v_{n_{1}}\|_{2}\|v_{n_{2}}\|_{2}\|v_{n_{3}}\|_{2}}{|n-n_{1}||n-n_{3}|},$$
and by H\"older's inequality the upper bound
$$\Big(\sum_{A_{N}(n)^{c}}\frac1{(|n-n_{1}||n-n_{3}|)^{q'}}\Big)^{\frac1{q'}}\Big(\sum_{A_{N}(n)^{c}}\|v_{n_{1}}\|_{2}^{q}\|v_{n_{2}}\|_{2}^{q}\|v_{n_{3}}\|_{2}^{q}\Big)^{\frac1{q}}.$$
The first sum (for $\mu=|n-n_{1}||n-n_{3}|$) is estimated from above by (with the use of (\ref{num}))
$$\Big(\sum_{\mu=N+1}^{\infty}\frac{\mu^{\epsilon}}{\mu^{q'}}\Big)^{\frac1{q'}}\sim (N^{\epsilon+1-q'})^{\frac1{q'}}=N^{\frac1{q'}-1+},$$
and then with the use of Young's inequality we arrive at
$$\|N_{21}^{t}(v)\|_{l^{q}L^{2}}\lesssim N^{\frac1{q'}-1+}\|v\|^{3}_{M_{2,q}}$$
as claimed. 
\end{proof}
To the remaining part $N_{22}^{t}$ we have to make use of equality (\ref{mainmain}) depending on whether the derivative falls on $\hat{v}_{n_{1}}$ or $\hat{\bar{v}}_{n_{2}}$ or $\hat{v}_{n_{3}}$. Let us see how we can proceed from here:
$$N_{22}^{t}(v)(n)=-2i\sum_{A_{N}(n)^{c}}\Big[\tilde{Q}^{1,t}_{n}(R^{t}_{2}(v)(n_{1})-R^{t}_{1}(v)(n_{1}),\bar{v}_{n_{2}},v_{n_{3}})+\tilde{Q}^{1,t}_{n}(N_{1}^{t}(v)(n_{1}),\bar{v}_{n_{2}},v_{n_{3}})\Big]$$
plus the corresponding term for $\partial_{t}\hat{\bar{v}}_{n_{2}}$ (the number $2$ that appears in front of the previous sum is because the expression is symmetric with respect to $v_{n_{1}}$ and $v_{n_{3}}$). Therefore, we can write $N_{22}^{t}$ as a sum
\begin{equation}
\label{patel2}
N_{22}^{t}(v)(n)=N_{4}^{t}(v)(n)+N_{3}^{t}(v)(n),
\end{equation}
where $N_{4}^{t}(v)(n)$ is the sum with the resonant part $R^{t}_{2}-R^{t}_{1}.$ The following Lemma is true:

\begin{lemma}
\label{fir2}
$$\|N_{4}^{t}(v)\|_{l^{q}L^2}\lesssim N^{\frac1{q'}-1+}\|v\|_{M_{2,q}}^{5},$$
and
$$\|N_{4}^{t}(v)-N_{4}^{t}(w)\|_{l^{q}L^2}\lesssim N^{\frac1{q'}-1+}(\|v\|_{M_{2,q}}^{4}+\|w\|_{M_{2,q}}^{4})\|v-w\|_{M_{2,q}}.$$
\end{lemma}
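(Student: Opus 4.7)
My plan is to reduce $N_4^t$ to the same type of estimate as $N_{21}^t$ in Lemma \ref{fir1} by viewing it as the trilinear expression $\tilde Q^{1,t}_n(F_{n_1},\bar v_{n_2},v_{n_3})$ with $F=R_2^t(v)-R_1^t(v)$ in the first slot (plus a symmetric term in the second slot coming from the $\partial_t\bar v_{n_2}$ contribution). The key enabling observation is Remark \ref{expl}: the proof of Lemma \ref{fir} uses only that the three inputs are Fourier-localised on fixed-length intervals around $n_1,n_2,n_3$, and since each of $R_1^t(v)(n_1)$ and $R_2^t(v)(n_1)$ carries the localising $\Box_{n_1}$ in front, its Fourier support sits in such an interval around $n_1$. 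Thus the pointwise bound
$$\|\tilde Q^{1,t}_n(F_{n_1},\bar v_{n_2},v_{n_3})\|_2 \;\lesssim\; \frac{\|F_{n_1}\|_2\,\|v_{n_2}\|_2\,\|v_{n_3}\|_2}{|n-n_1|\,|n-n_3|}$$
remains valid.

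From here I mirror the argument of Lemma \ref{fir1}. Summing over $A_N(n)^c$ and applying H\"older's inequality with exponents $(q',q)$ produces exactly the same weight factor $\lesssim N^{\frac{1}{q'}-1+}$ via the divisor bound (\ref{num}). Taking the $l^q$-norm in $n$ and applying Young's convolution inequality decouples the three factors, yielding
$$\|N_4^t(v)\|_{l^q L^2}\;\lesssim\;N^{\frac{1}{q'}-1+}\,\|F\|_{l^q L^2}\,\|v\|_{M_{2,q}}^2,$$
and the Lemma \ref{lem} bound $\|F\|_{l^q L^2}\lesssim\|v\|_{M_{2,q}}^3$ produces the claimed fifth power.

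The Lipschitz difference estimate is handled by the standard multilinear telescoping: write $N_4^t(v)-N_4^t(w)$ as three pieces, each of which replaces exactly one of the three arguments. The linear slots ($v_{n_2}$ and $v_{n_3}$) contribute $\|v-w\|_{M_{2,q}}$ directly via the same calculation as above, together with a factor $\|v\|_{M_{2,q}}^4+\|w\|_{M_{2,q}}^4$ from the remaining slots (using $\|F\|_{l^q L^2}\lesssim\|v\|_{M_{2,q}}^3$ or $\|w\|_{M_{2,q}}^3$). In the $F$ slot I invoke the Lipschitz part of Lemma \ref{lem} to get $\|F(v)-F(w)\|_{l^q L^2}\lesssim(\|v\|_{M_{2,q}}^2+\|w\|_{M_{2,q}}^2)\|v-w\|_{M_{2,q}}$; the two remaining slots then add a factor $\|v\|_{M_{2,q}}\|w\|_{M_{2,q}}$, which is absorbed in the same quadratic expression.

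The only place I would pause is the reduction in the first paragraph, namely checking that Lemma \ref{fir} really extends with $F_{n_1}$ in place of $v_{n_1}$; once this Fourier-localisation point is granted the estimate is essentially a rerun of Lemma \ref{fir1} with the $v_{n_1}$-factor upgraded to an $F_{n_1}$-factor, so I do not expect a genuine obstacle.
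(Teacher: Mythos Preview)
Your proposal is correct and follows essentially the same approach as the paper: the paper's proof simply says ``Follows by Lemmata \ref{lem} and \ref{fir1} in the sense that we repeat the proof of Lemma \ref{fir1} and apply Lemma \ref{lem} to the part $R_{2}^{t}(v)(n_{1})-R_{1}^{t}(v)(n_{1})$,'' which is exactly what you have spelled out. Your invocation of Remark \ref{expl} to justify the Fourier-localisation point is the right way to make the reduction precise, and the telescoping argument for the Lipschitz estimate is the standard one implicit in the paper's treatment.
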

\begin{proof}
Follows by Lemmata \ref{lem} and \ref{fir1} in the sense that we repeat the proof of Lemma \ref{fir1} and apply Lemma \ref{lem} to the part $R_{2}^{t}(v)(n_{1})-R_{1}^{t}(v)(n_{1})$.
\end{proof}
To continue, we have to decompose $N_{3}^{t}$ even further. It consists of $3$ sums depending on where the operator $N_{1}^{t}$ acts. One of them is the following (similar considerations apply for the remaining sums too)
\begin{equation}
\label{newnew1}
\sum_{A_{N}(n)^{c}}\tilde{Q}^{1,t}_{n}(N_{1}^{t}(v)(n_{1}),\bar{v}_{n_{2}},v_{n_{3}}),
\end{equation}
where
$$N_{1}^{t}(v)(n_{1})=\sum_{m_{1}\not\approx n_{1}\not\approx m_{3}}Q^{1,t}_{n_{1}}(v_{m_{1}},\bar{v}_{m_{2}},v_{m_{3}}),$$
and $n_{1}\approx m_{1}-m_{2}+m_{3}$. Here we have to consider new restrictions on the frequencies $(m_{1},m_{2},m_{3},n_{2},n_{3})$ where the "new" triple of frequencies $m_{1},m_{2},m_{3}$ appears as a "child" of the frequency $n_{1}$. Thus, we define the set ($\mu_{1}=\Phi(n,n_{1},n_{2},n_{3})$ and $\mu_{2}=\Phi(n_{1},m_{1},m_{2},m_{3})$)
\begin{equation}
\label{setset1}
C_{1}=\{|\mu_{1}+\mu_{2}|\leq 5^{3}|\mu_{1}|^{1-\frac1{100}}\},
\end{equation}
and split the sum in (\ref{newnew1}) as 
\begin{equation}
\label{patel}
\sum_{A_{N}(n)^{c}}\sum_{C_{1}}\ldots+\sum_{A_{N}(n)^{c}}\sum_{C_{1}^{c}}\ldots=N_{31}^{t}(v)(n)+N_{32}^{t}(v)(n).
\end{equation}
The following holds:

\begin{lemma}
\label{fir3}
$$\|N_{31}^{t}(v)\|_{l^{q}L^{2}}\lesssim N^{\frac2{q'}-\frac1{100q'}-1+}\|v\|_{M_{2,q}}^{5},$$
and
$$\|N_{31}^{t}(v)-N_{31}^{t}(w)\|_{l^{q}L^{2}}\lesssim N^{\frac2{q'}-\frac1{100q'}-1+}(\|v\|^{4}_{M_{2,q}}+\|w\|_{M_{2,q}}^{4})\|v-w\|_{M_{2,q}}.$$
\end{lemma}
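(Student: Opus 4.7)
The plan is to unwind the definition of $N_{31}^{t}$ into an iterated six-fold sum and to reduce the estimate, via Lemma \ref{fir} and the observation (\ref{imppo}) from Remark \ref{expl}, to a counting problem that is controlled by the divisor bound (\ref{num}) together with the extra near-resonance restriction encoded in $C_{1}$. Concretely, combining (\ref{newnew1}) with (\ref{patel}) gives
\begin{equation*}
N_{31}^{t}(v)(n)=\sum_{A_{N}(n)^{c}}\sum_{C_{1}}\tilde{Q}^{1,t}_{n}\bigl(Q^{1,t}_{n_{1}}(v_{m_{1}},\bar v_{m_{2}},v_{m_{3}}),\bar v_{n_{2}},v_{n_{3}}\bigr),
\end{equation*}
with the inner indices running over $m_{1}\not\approx n_{1}\not\approx m_{3}$ and $m_{1}-m_{2}+m_{3}\approx n_{1}$. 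Applying Lemma \ref{fir} (valid for any triple of Fourier-localised functions by Remark \ref{expl}) and then (\ref{imppo}) yields the pointwise bound
\begin{equation*}
\bigl\|\tilde{Q}^{1,t}_{n}\bigl(Q^{1,t}_{n_{1}}(v_{m_{1}},\bar v_{m_{2}},v_{m_{3}}),\bar v_{n_{2}},v_{n_{3}}\bigr)\bigr\|_{2}\lesssim\frac{\|v_{m_{1}}\|_{2}\|v_{m_{2}}\|_{2}\|v_{m_{3}}\|_{2}\|v_{n_{2}}\|_{2}\|v_{n_{3}}\|_{2}}{|n-n_{1}||n-n_{3}|}.
\end{equation*}

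Next I would apply H\"older's inequality with exponents $(q',q)$ exactly as in the proofs of Lemmata \ref{lemle} and \ref{fir1}, separating the geometric weight $(|n-n_{1}||n-n_{3}|)^{-q'}$ from the product of norms. The key step is then the counting of the weighted sum
\begin{equation*}
S_{n}:=\sum_{A_{N}(n)^{c}}\sum_{C_{1}}\frac{1}{(|n-n_{1}||n-n_{3}|)^{q'}}.
\end{equation*}
Dyadically decompose $|\mu_{1}|\sim M$ with $M>N$. For each such $M$ the identity $\mu_{1}=2(n-n_{1})(n-n_{3})$ combined with (\ref{num}) gives $O(M^{0+})$ factorisations per integer value of $\mu_{1}$, times $O(M)$ admissible values of $\mu_{1}$, while $n_{2}$ is then determined up to $O(1)$. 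The restriction $C_{1}$ forces $\mu_{2}\in[-\mu_{1}-5^{3}M^{1-1/100},\,-\mu_{1}+5^{3}M^{1-1/100}]$, giving $O(M^{1-1/100})$ values of $\mu_{2}$, each yielding $O(M^{0+})$ choices of $(m_{1},m_{3})$ and $O(1)$ for $m_{2}$. Thus the number of admissible tuples at level $M$ is $O(M^{2-1/100+0+})$, each contributing weight $\sim M^{-q'}$. Since $q'\geq 2>2-1/100$, the resulting geometric series sums to $N^{2-1/100-q'+0+}$, so $S_{n}^{1/q'}\lesssim N^{2/q'-1/(100q')-1+}$.

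For the remaining factor, raise to the $q$-th power and sum over $n$. Summing first in $n$ (which is determined by $n_{1}-n_{2}+n_{3}$ up to $O(1)$) and then in $n_{1}$ (determined by $m_{1}-m_{2}+m_{3}$ up to $O(1)$) via Young's inequality on $\Z$ decouples the five indices and yields $\|v\|_{M_{2,q}}^{5q}$. Taking $q$-th roots and combining with the previous paragraph gives the claimed bound on $\|N_{31}^{t}(v)\|_{l^{q}L^{2}}$. The Lipschitz estimate follows by the usual telescoping
\begin{equation*}
\prod_{i=1}^{5}a_{i}-\prod_{i=1}^{5}b_{i}=\sum_{i=1}^{5}b_{1}\cdots b_{i-1}(a_{i}-b_{i})a_{i+1}\cdots a_{5},
\end{equation*}
applied to the five factors $v_{m_{1}},v_{m_{2}},v_{m_{3}},v_{n_{2}},v_{n_{3}}$ replaced by their $w$-counterparts, with each of the five resulting sums estimated by the same counting argument.

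The main obstacle is the counting paragraph: one must carefully combine the lower bound $|\mu_{1}|>N$ from $A_{N}(n)^{c}$ with the near-resonance window from $C_{1}$, making sure the divisor bound (\ref{num}) is applied on both factorisations $\mu_{1}=2(n-n_{1})(n-n_{3})$ and $\mu_{2}=2(n_{1}-m_{1})(n_{1}-m_{3})$ simultaneously, and that the loss $M^{1-1/100}$ from the $\mu_{2}$-window correctly balances against $M^{-q'}$ so as to keep the final exponent strictly less than zero. Everything else is a direct adaptation of the H\"older/Young scheme already exhibited in Lemmata \ref{lemle}, \ref{fir1} and \ref{fir2}.
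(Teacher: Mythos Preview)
Your proposal is correct and follows essentially the same approach as the paper: apply Lemma~\ref{fir} together with the trivial $L^{2}$ bound (\ref{imppo}) on the inner $Q^{1,t}_{n_{1}}$, then H\"older with exponents $(q',q)$, use the divisor bound (\ref{num}) on both factorisations $\mu_{1}=2(n-n_{1})(n-n_{3})$ and $\mu_{2}=2(n_{1}-m_{1})(n_{1}-m_{3})$ together with the observation $|\mu_{2}|\sim|\mu_{1}|$ and the $O(|\mu_{1}|^{1-1/100})$ window for $\mu_{2}$ coming from $C_{1}$, and finish with Young's inequality in $l^{q}$. The only cosmetic difference is that you phrase the counting via a dyadic decomposition in $|\mu_{1}|\sim M$, while the paper sums directly over integer values of $\mu_{1}$ and writes the weighted count as $\bigl(\sum_{\mu>N}\mu^{1-\frac1{100}+}/\mu^{q'}\bigr)^{1/q'}$; both routes give the same exponent $N^{2/q'-1/(100q')-1+}$.
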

\begin{proof}
From (\ref{num}) we know that for fixed $n$ and $\mu_{1}$, there are at most $o(|\mu_{1}|^{+})$ many choices for $n_{1}$ and $n_{3}$ and for fixed $n_{1}$ and $\mu_{2}$ there are at most $o(|\mu_{2}^{+})$ many choices for $m_{1}$ and $m_{3}$. From (\ref{setset1}) we can control $\mu_{2}$ in terms of $\mu_{1}$, that is $|\mu_{2}|\sim|\mu_{1}|$. In addition, for fixed $|\mu_{1}|$ there are at most $O(|\mu_{1}|^{1-\frac1{100}})$ many choices for $\mu_{2}.$ Therefore,
$$\|N_{31}^{t}(v)\|_{2}\leq\sum_{A_{N}(n)^{c}}\sum_{C_{1}}\|\tilde{Q}^{1,t}_{n}(Q^{1,t}_{n_{1}}(v_{m_{1}},\bar{v}_{m_{2}},v_{m_{3}}),\bar{v}_{n_{2}},v_{n_{3}})\|_{2}\lesssim$$
$$\sum_{A_{N}(n)^{c}}\sum_{C_{1}}\frac{\|v_{m_{1}}\|_{2}\|v_{m_{2}}\|_{2}\|v_{m_{3}}\|_{2}\|v_{n_{2}}\|_{2}\|v_{n_{3}}\|_{2}}{|n-n_{1}||n-n_{3}|}\leq$$
$$\Big(\sum_{\mu=N+1}^{\infty}\frac{\mu^{1-\frac1{100}+}}{\mu^{q'}}\Big)^{\frac1{q'}}\Big(\sum_{A_{N}(n)^{c}}\sum_{C_{1}}\|v_{m_{1}}\|_{2}^{q}\|v_{m_{2}}\|_{2}^{q}\|v_{m_{3}}\|_{2}^{q}\|v_{n_{2}}\|_{2}^{q}\|v_{n_{3}}\|_{2}^{q}\Big)^{\frac1{q}},$$
and then by taking the $l^{q}$ norm in $n$ and applying Young's inequality we are led to the desired estimate. 
\end{proof}
For the $N_{32}^{t}$ part we have to do the differentiation by parts technique which will create the $2$nd generation operators. Our first $2$nd generation operator $Q^{2,t}_{n}$ consists of three sums 
$$q^{2,t}_{1,n}=\sum_{A_{N}(n)^{c}}\sum_{ C_{1}^{c}}\tilde{Q}^{1,t}_{n}(N_{1}^{t}(v)(n_{1}),\bar{v}_{n_{2}},v_{n_{3}}),$$
$$q^{2,t}_{2,n}=\sum_{A_{N}(n)^{c}}\sum_{ C_{1}^{c}}\tilde{Q}^{1,t}_{n}(v_{n_{1}},\overline{N_{1}^{t}(v)}(n_{2}),v_{n_{3}}),$$
$$q^{2,t}_{3,n}=\sum_{A_{N}(n)^{c}}\sum_{ C_{1}^{c}}\tilde{Q}^{1,t}_{n}(v_{n_{1}},\bar{v}_{n_{2}},N_{1}^{t}(v)(n_{3})).$$
Let us have a look at the first sum $q^{2,t}_{1,n}$ (we treat the other two in a similar manner). Its Fourier transform is equal to 
$$\sum_{A_{N}(n)^{c}}\sum_{ C_{1}^{c}}\sigma_{n}(\xi)\int_{\mathbb R^2}\frac{e^{-2it(\xi-\xi_{1})(\xi-\xi_{3})}}{(\xi-\xi_{1})(\xi-\xi_{3})}\ \mathcal F(N_{1}^{t}(v)(n_{1}))(\xi_{1})\hat{\bar{v}}_{n_{2}}(\xi-\xi_{1}-\xi_{3})\hat{v}_{n_{3}}(\xi_{3})\ d\xi_{1}d\xi_{3},$$
where
$$\mathcal F(N_{1}^{t}(v)(n_{1}))(\xi_{1})$$
equals
$$\sum_{\substack {n_{1}\approx m_{1}-m_{2}+m_{3} \\ m_{1}\not\approx n_{1}\not\approx m_{3}}}\sigma_{n_{1}}(\xi_{1})\int_{\mathbb R^2}e^{-2it(\xi_{1}-\xi_{1}')(\xi_{1}-\xi_{3}')}\hat{v}_{m_{1}}(\xi_{1}')\hat{\bar{v}}_{m_{2}}(\xi_{1}-\xi_{1}'-\xi_{3}')\hat{v}_{m_{3}}(\xi_{3}')\ d\xi_{1}'d\xi_{3}'.$$
Putting everything together and applying differentiation by parts we can write the integrals inside the sums as
$$\partial_{t}\Big(\sigma_{n}(\xi)\int_{\mathbb R^4}\sigma_{n_{1}}(\xi_{1})\frac{e^{-it(\mu_{1}+\mu_{2})}}{\mu_{1}(\mu_{1}+\mu_{2})}\hat{v}_{m_{1}}(\xi_{1}')\hat{\bar{v}}_{m_{2}}(\xi_{1}-\xi_{1}'-\xi_{3}')\hat{v}_{m_{3}}(\xi_{3}')\hat{\bar{v}}_{n_{2}}(\xi-\xi_{1}-\xi_{3})\hat{v}_{n_{3}}(\xi_{3})d\xi_{1}'d\xi_{3}'d\xi_{1}d\xi_{3}\Big)$$
minus 
$$\sigma_{n}(\xi)\int_{\mathbb R^4}\sigma_{n_{1}}(\xi_{1})\frac{e^{-it(\mu_{1}+\mu_{2})}}{\mu_{1}(\mu_{1}+\mu_{2})}\partial_{t}\Big(\hat{v}_{m_{1}}(\xi_{1}')\hat{\bar{v}}_{m_{2}}(\xi_{1}-\xi_{1}'-\xi_{3}')\hat{v}_{m_{3}}(\xi_{3}')\hat{\bar{v}}_{n_{2}}(\xi-\xi_{1}-\xi_{3})\hat{v}_{n_{3}}(\xi_{3})\Big)d\xi_{1}'d\xi_{3}'d\xi_{1}d\xi_{3},$$
where $\mu_{1}=(\xi-\xi_{1})(\xi-\xi_{3})$ and $\mu_{2}=(\xi_{1}-\xi_{1}')(\xi_{1}-\xi_{3}').$ Equivalently,
\begin{equation}
\label{sms}
\mathcal F(q^{2,t}_{1,n})=\partial_{t}(\tilde q^{2,t}_{1,n})-\mathcal F(\tau^{2,t}_{1,n}).
\end{equation}
Thus, by doing the same at the remaining two sums of $Q^{2,t}_{n}$, namely $q^{2,t}_{2,n}, q^{2,t}_{3,n}$, we obtain the splitting 
\begin{equation}
\label{neq11}
\mathcal F(Q^{2,t}_{n})=\partial_{t}\mathcal F(\tilde Q^{2,t}_{n})-\mathcal F(T^{2,t}_{n}).
\end{equation}
These new operators $\tilde q^{2,t}_{i,n}$, $i=1,2,3$, act on the following "type" of sequences
$$\tilde q^{2,t}_{1,n}(v_{m_{1}},\bar{v}_{m_{2}},v_{m_{3}},\bar{v}_{n_{2}},v_{n_{3}}),$$
with $m_{1}-m_{2}+m_{3}\approx n_{1}$ and $n_{1}-n_{2}+n_{3}\approx n$,
$$\tilde q^{2,t}_{2,n}(v_{n_{1}},\bar{v}_{m_{1}},v_{m_{2}},\bar{v}_{m_{3}},v_{n_{3}}),$$
with $m_{1}-m_{2}+m_{3}\approx n_{2}$ and $n_{1}-n_{2}+n_{3}\approx n$, and
$$\tilde q^{2,t}_{3,n}(v_{n_{1}}\bar{v}_{n_{2}},v_{m_{1}},\bar{v}_{m_{2}},v_{m_{3}}),$$
with $m_{1}-m_{2}+m_{3}\approx n_{3}$ and $n_{1}-n_{2}+n_{3}\approx n$. 

In order to proceed we need a similar lemma for the operator $\tilde Q^{2,t}_{n}$ as the one we had for $\tilde Q^{1,t}_{n}$ (see Lemma \ref{fir}). Here we state it only for $\tilde q^{2,t}_{1,n}$ (remember that we look only at frequencies on $A_{N}(n)^{c}$ and $C_{1}^{c}$):

\begin{lemma}
\label{fir34}
\begin{equation}
\|\tilde q^{2,t}_{1,n}(v_{m_{1}},\bar{v}_{m_{2}},v_{m_{3}},\bar{v}_{n_{2}},v_{n_{3}})\|_{2}\lesssim\frac{\|v_{m_{1}}\|_{2}\|v_{m_{2}}\|_{2}\|v_{m_{3}}\|_{2}\|v_{n_{2}}\|_{2}\|v_{n_{3}}\|_{2}}{|n-n_{1}||n-n_{3}||(n-n_{1})(n-n_{3})+(n_{1}-m_{1})(n_{1}-m_{3})|}.
\end{equation}
\end{lemma}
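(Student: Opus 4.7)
\textbf{Proof plan for Lemma \ref{fir34}.} The argument follows the same blueprint as the proof of Lemma \ref{fir}, adapted to the more intricate multiplier and the two additional integration variables that arise at the second generation. It has three parts: a duality reduction, an $L^{\infty}$ multiplier bound, and a Cauchy--Schwarz estimate that exploits the compact Fourier supports.

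\emph{Step 1 (Duality/Plancherel).} Since the prefactor $e^{-it(\mu_1+\mu_2)}$ has modulus one, it plays no role in the $L^2$ bound, exactly as in Lemma \ref{fir}. By Plancherel and duality, $\|\tilde q^{2,t}_{1,n}\|_2 = \sup_{\|g\|_2=1}|\langle \tilde q^{2,t}_{1,n},g\rangle|$. I would write the pairing on the Fourier side and perform the change of variables $\xi_2 = \xi-\xi_1-\xi_3$, $\xi_2' = \xi_1-\xi_1'-\xi_3'$ (analogous to $\xi_1 \to \eta$ in Lemma \ref{fir}), obtaining an integral of the form
\[
\int \overline{\hat g(\xi)}\, e^{-it(\mu_1+\mu_2)}\,\rho_n^{(2)}(\xi,\xi_1,\xi_3,\xi_1',\xi_3')\,\hat v_{m_1}(\xi_1')\hat{\bar v}_{m_2}(\xi_2')\hat v_{m_3}(\xi_3')\hat{\bar v}_{n_2}(\xi_2)\hat v_{n_3}(\xi_3)\,d\xi_1'd\xi_2'd\xi_3'd\xi_2d\xi_3,
\]
with $\rho_n^{(2)} := \sigma_n(\xi)\sigma_{n_1}(\xi_1)/[\mu_1(\mu_1+\mu_2)]$, where $\xi_1 = \xi_1'+\xi_2'+\xi_3'$ and $\xi = \xi_1+\xi_2+\xi_3$.

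\emph{Step 2 (Multiplier bound, the key step).} On the support of the integrand $\xi\approx n,\,\xi_1\approx n_1,\,\xi_3\approx n_3,\,\xi_1'\approx m_1,\,\xi_3'\approx m_3$. Since $(n_1,n_2,n_3)\in A_N(n)^c$ forces $|n-n_1|,\,|n-n_3|\geq 2$, a direct expansion gives $|\mu_1|\asymp|n-n_1||n-n_3|$. For the delicate factor $\mu_1+\mu_2$, set $P := (n-n_1)(n-n_3)+(n_1-m_1)(n_1-m_3)$; expanding the product $\mu_1+\mu_2$ around the integer points yields $|\mu_1+\mu_2-P|\lesssim |n-n_1|+|n-n_3|+|n_1-m_1|+|n_1-m_3|$. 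The restriction to $C_1^c$ provides $|\mu_1+\mu_2| > 5^3|\mu_1|^{1-1/100}$, which strictly dominates the error term, so that $|\mu_1+\mu_2|\asymp |P|$. Combining,
\[
\|\rho_n^{(2)}\|_\infty \;\lesssim\; \frac{1}{|n-n_1|\,|n-n_3|\,|(n-n_1)(n-n_3)+(n_1-m_1)(n_1-m_3)|}.
\]

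\emph{Step 3 (Cauchy--Schwarz on the remaining integral).} Pulling $\|\rho_n^{(2)}\|_\infty$ out and applying Cauchy--Schwarz exactly as in Lemma \ref{fir}, the product $\hat v_{m_1}(\xi_1')\hat{\bar v}_{m_2}(\xi_2')\hat v_{m_3}(\xi_3')\hat{\bar v}_{n_2}(\xi_2)\hat v_{n_3}(\xi_3)$ has $L^2$-norm in $(\xi_1',\xi_2',\xi_3',\xi_2,\xi_3)$ equal to $\|v_{m_1}\|_2\|v_{m_2}\|_2\|v_{m_3}\|_2\|v_{n_2}\|_2\|v_{n_3}\|_2$, while the $L^2$-norm of $\overline{\hat g(\xi_1'+\xi_2'+\xi_3'+\xi_2+\xi_3)}$ over the product of the five unit-length supports $I_{m_i}\times I_{n_j}$ is controlled by $\|g\|_2$ times an $O(1)$ factor (four of the five intervals have length~$1$, so a single change of variables converts the integral into $\|g\|_2^2$ times the measure of the remaining four intervals). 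Combining with Step 2 gives the claimed bound.

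The main obstacle is Step 2: the passage from the continuous phase $\mu_1+\mu_2$ to its "integer" counterpart $P$. This is exactly what the cutoff $C_1^c$ was designed to make possible: the error $O(\sqrt{|\mu_1|}+\sqrt{|\mu_2|})$ can only be absorbed because $|\mu_1+\mu_2|$ is forced to be of size at least $|\mu_1|^{99/100}$; any weaker bound would not leave enough room to absorb the error, and a stronger one would worsen the counting exponent needed later (compare Lemma \ref{fir3}).
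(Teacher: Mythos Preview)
Your overall architecture---duality pairing on the Fourier side, an $L^\infty$ bound on the symbol $\rho^{(2)}$, then Cauchy--Schwarz exploiting the unit-length Fourier supports---is exactly the route the paper takes, and Steps~1 and~3 are fine. The paper's own proof simply writes down $\rho^{(2)}_{n,n_1}$, invokes the argument of Lemma~\ref{fir} verbatim, and asserts the bound on $\|\rho^{(2)}_{n,n_1}\|_\infty$ from the localisation $\xi_1'\approx m_1,\dots,\xi_3\approx n_3$ without further comment; in particular it does not isolate the continuous--versus--discrete phase issue you raise in Step~2.

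There is, however, a concrete error in your Step~2. You correctly record the first-order discrepancy as $|n-n_1|+|n-n_3|+|n_1-m_1|+|n_1-m_3|$, but then claim this quantity is $O(\sqrt{|\mu_1|}+\sqrt{|\mu_2|})$. The AM--GM inequality runs the other way: $|n-n_1|+|n-n_3|\ge 2\sqrt{|n-n_1|\,|n-n_3|}\sim\sqrt{|\mu_1|}$, so $\sqrt{|\mu_1|}$ is a \emph{lower} bound on that portion of the error, not an upper one. The honest upper bound is $|n-n_1|+|n-n_3|\le |n-n_1|\,|n-n_3|\sim|\mu_1|$ (each factor is at least~$2$), and likewise $|n_1-m_1|+|n_1-m_3|\lesssim|\mu_2|$. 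Consequently the error is only $O(|\mu_1|+|\mu_2|)$, and the $C_1^c$ lower bound $|P|>5^3|\mu_1|^{1-1/100}$ does \emph{not} dominate $|\mu_1|+|\mu_2|$ in the regime where $|\mu_1|$ is very large and $\mu_2\approx-\mu_1$. Note also that $C_1^c$ is a condition on the \emph{discrete} quantity $P$, not on the continuous $\mu_1+\mu_2$, so the sentence ``$C_1^c$ provides $|\mu_1+\mu_2|>5^3|\mu_1|^{1-1/100}$'' already conflates the two objects. In short: you have put your finger on a genuine subtlety that the paper glosses over, but the absorption mechanism you propose does not work as written.
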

\begin{proof}
Writing out the Fourier transforms of the functions inside the integral of $\mathcal F(\tilde q^{2,t}_{1,n})$ it is not hard to see that 
$$\mathcal F(\tilde q^{2,t}_{1,n}(v_{m_{1}},\bar{v}_{m_{2}},v_{m_{3}},\bar{v}_{n_{2}},v_{n_{3}}))(\xi)=e^{-it\xi^{2}} \mathcal F(R^{2,t}_{n,n_{1}}(u_{m_{1}},\bar{u}_{m_{2}},u_{m_{3}},\bar{u}_{n_{2}},u_{n_{3}}))(\xi),$$
where the operator 
\begin{equation}
\label{set2}
R^{2,t}_{n,n_{1}}(w_{m_{1}},\bar{w}_{m_{2}},w_{m_{3}},\bar{w}_{n_{2}},w_{n_{3}})(x)=
\end{equation}
$$\int_{\mathbb R^5}K^{(2)}_{n,n_{1}}(x,x_{1}',y',x_{3}',y,x_{3})w_{m_{1}}(x_{1}')\bar{w}_{m_{2}}(y')w_{m_{3}}(x_{3}')\bar{w}_{n_{2}}(y)w_{n_{3}}(x_{3})\ dx_{1}'dy'dx_{3}'dydx_{3}$$
and the Kernel $K^{(2)}_{n,n_{1}}$ is given by the formula
\begin{equation}
\label{set3}
K^{(2)}_{n,n_{1}}(x,x_{1}',y',x_{3}',y,x_{3})=
\end{equation}
$$\int_{\mathbb R^5}[e^{i\xi_{1}'(x-x_{1}')+i\eta'(x-y')+i\xi_{3}'(x-x_{3}')+i\eta(x-y)+i\xi_{3}(x-x_{3})}]$$
$$\frac{\sigma_{n}(\xi_{1}'+\eta'+\xi_{3}'+\eta+\xi_{3})\sigma_{n_{1}}(\xi_{1}'+\eta'+\xi_{3}')}{(\eta+\eta'+\xi_{1}'+\xi_{3}')(\eta+\xi_{3})[(\eta+\eta'+\xi_{1}'+\xi_{3}')(\eta+\xi_{3})+(\eta'+\xi_{1}')(\eta'+\xi_{3}')]} d\xi_{1}'d\eta'd\xi_{3}'d\eta d\xi_{3}=$$
$$(\mathcal F^{-1}\rho^{(2)}_{n,n_{1}})(x-x_{1}',x-y',x-x_{3}',x-y,x-x_{3}),$$
and the function $\rho^{(2)}_{n,n_{1}}$ equals
$$\rho^{(2)}_{n,n_{1}}(\xi_{1}',\eta',\xi_{3}',\eta,\xi_{3})=\frac{\sigma_{n}(\xi_{1}'+\eta'+\xi_{3}'+\eta+\xi_{3})\sigma_{n_{1}}(\xi_{1}'+\eta'+\xi_{3}')}{(\eta+\eta'+\xi_{1}'+\xi_{3}')(\eta+\xi_{3})[(\eta+\eta'+\xi_{1}'+\xi_{3}')(\eta+\xi_{3})+(\eta'+\xi_{1}')(\eta'+\xi_{3}')]}.$$
The operator $R^{2,t}_{n,n_{1}}$ is estimated in $L^2$ as in the proof of Lemma \ref{fir} and the function $\rho^{(2)}_{n,n_{1}}$ plays the same role as the function $\rho^{(1)}_{n}$ did for $R^{1,t}_{n}$, therefore, 
$$\|R^{2,t}_{n,n_{1}}(v_{m_{1}},\bar{v}_{m_{2}},v_{m_{3}},\bar{v}_{n_{2}},v_{n_{3}})\|_{2}\lesssim\|\rho^{(2)}_{n,n_{1}}\|_{\infty}\|v_{m_{1}}\|_{2}\|v_{m_{2}}\|_{2}\|v_{m_{3}}\|_{2}\|v_{n_{2}}\|_{2}\|v_{n_{3}}\|_{2},$$
and since $\xi_{1}'\approx m_{1}, \eta'\approx-m_{2}, \xi_{3}'\approx m_{3}, \eta\approx-n_{2}, \xi_{3}\approx n_{3}$ we obtain
$$\|\rho^{(2)}_{n,n_{1}}\|_{\infty}\lesssim\frac1{|n-n_{1}||n-n_{3}||(n-n_{1})(n-n_{3})+(n_{1}-m_{1})(n_{1}-m_{3})|},$$
which finishes the proof.
\end{proof}

\begin{remark}
The operator $\tilde q^{2,t}_{3,n}$ satisfies exactly the same bound as $\tilde q^{2,t}_{1,n}$ since the only difference between these operators is a permutation of their variables. On the other hand, the operator $\tilde q^{2,t}_{2,n}$ is a bit different, since instead of taking only the permutation we have to conjugate the $2$nd variable too. Thus, a similar argument as the one given in Lemma \ref{fir34} leads to the estimate
\begin{equation}
\label{fir354}
\|\tilde q^{2,t}_{2,n}(v_{n_{1}},\bar{v}_{m_{1}},v_{m_{2}},\bar{v}_{m_{3}},v_{n_{3}})\|_{2}\lesssim\frac{\|v_{n_{1}}\|_{2}\|v_{m_{1}}\|_{2}\|v_{m_{2}}\|_{2}\|v_{m_{3}}\|_{2}\|v_{n_{3}}\|_{2}}{|(n-n_{1})(n-n_{3})||(n-n_{1})(n-n_{3})-(n_{2}-m_{1})(n_{2}-m_{3})|}
\end{equation}
which is not exactly the same as the one we had for the operators $\tilde q^{2,t}_{1,n}, \tilde q^{2,t}_{3,n}$ since in the denominator instead of having $\mu_{1}+\mu_{2}$ we have $\mu_{1}-\mu_{2}$ ($\mu_{1}=(n-n_{1})(n-n_{3})$ and in the first case $\mu_{2}=(n_{1}-m_{1})(n_{1}-\mu_{3})$, $m_{1}, m_{3}$ being the "children" of $n_{1}$, whereas in the second case $\mu_{2}=(n_{2}-m_{1})(n_{2}-m_{3})$, $m_{1}, m_{3}$ being the "children" of $n_{2}$). It is readily checked that this change in the sign does not really affect the calculations that are to follow.
\end{remark}
This Lemma allows us to move forward with our iteration process and show that the operators

\begin{equation}
\label{formm1}
N_{0}^{(3)}(v)(n):=\sum_{A_{N}(n)^{c}}\sum_{C_{1}^{c}}\tilde Q^{2,t}_{n}=\sum_{A_{N}(n)^{c}}\sum_{C_{1}^{c}}\sum_{i=1}^{3}\tilde q^{2,t}_{i,n}
\end{equation}
and
\begin{equation}
\label{formm2}
N^{(3)}_{r}(v)(n):=\sum_{A_{N}(n)^{c}}\sum_{C_{1}^{c}}\Big(\tilde q^{2,t}_{1,n}(R^{t}_{2}(v)(m_{1})-R^{t}_{1}(v)(m_{1}),\bar{v}_{m_{2}},v_{m_{3}},\bar{v}_{n_{2}},v_{n_{3}})+
\end{equation}
$$\tilde q^{2,t}_{1,n}(v_{m_{1}},\overline{R^{t}_{2}(v)(m_{2})-R^{t}_{1}(v)(m_{2})},v_{m_{3}},\bar{v}_{n_{2}},v_{n_{3}})+\ldots+\tilde q^{2,t}_{3,n}(v_{n_{1}}\bar{v}_{n_{2}},v_{m_{1}},\bar{v}_{m_{2}},R^{t}_{2}(v)(m_{3})-R^{t}_{1}(v)(m_{3}))\Big),$$
are bounded on $l^{q}L^{2}$. The operator $N_{r}^{(3)}$ appears when we substitute each of the derivatives in the operator $\sum_{i=1}^{3}\tau^{2,t}_{i,n}$ by the expression given in (\ref{mainmain}). Notice that the operator $N_{0}^{(3)}$ has three summands and the operator $N_{r}^{(3)}$ has $3\cdot 5=15$ summands. Here is the claim:

\begin{lemma}
\label{gg}
$$\|N_{0}^{(3)}(v)\|_{l^{q}L^{2}}\lesssim N^{-2+\frac1{100}+\frac2{q'}-\frac1{100q'}+}\|v\|_{M_{2,q}}^{5},$$
and 
$$\|N_{0}^{(3)}(v)-N_{0}^{(3)}(w)\|_{l^{q}L^{2}}\lesssim N^{-2+\frac1{100}+\frac2{q'}-\frac1{100q'}+}(\|v\|_{M_{2,q}}^{4}+\|w\|_{M_{2,q}}^{4})\|v-w\|_{M_{2,q}}.$$

$$\|N_{r}^{(3)}(v)\|_{l^{q}L^{2}}\lesssim N^{-2+\frac1{100}+\frac2{q'}-\frac1{100q'}+}\|v\|^{7}_{M_{2,q}},$$
and
$$\|N_{r}^{(3)}(v)-N_{r}^{(3)}(w)\|_{l^{q}L^{2}}\lesssim N^{-2+\frac1{100}+\frac2{q'}-\frac1{100q'}+}(\|v\|_{M_{2,q}}^{6}+\|w\|_{M_{2,q}}^{6})\|v-w\|_{M_{2,q}}.$$
\end{lemma}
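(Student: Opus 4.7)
The plan is to mimic the estimate in Lemma \ref{fir1}, upgraded with the sharper pointwise bound of Lemma \ref{fir34} (and its sign-modified analogue \eqref{fir354} for $\tilde q^{2,t}_{2,n}$), so as to exploit the extra denominator $|\mu_1 \pm \mu_2|$. On $A_N(n)^c \cap C_1^c$ one has $|\mu_1| \sim |n-n_1||n-n_3| \gtrsim N$ and $|\mu_1 \pm \mu_2| \gtrsim |\mu_1|^{1 - 1/100}$, and combining these with Lemma \ref{fir34} yields, for $i = 1, 2, 3$,
\begin{equation*}
\|\tilde q^{2,t}_{i,n}(\cdot)\|_2 \lesssim \frac{\prod_{j=1}^{5}\|v_{*_j}\|_2}{|\mu_1|\,|\mu_1 \pm \mu_2|}.
\end{equation*}

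I would then apply Hölder's inequality in the five-fold index sum with exponents $q'$ and $q$ to separate the counting from the $v$-factors,
\begin{equation*}
\|N_0^{(3)}(v)(n)\|_2 \leq \left(\sum \frac{1}{(|\mu_1|\,|\mu_1 \pm \mu_2|)^{q'}}\right)^{1/q'} \left(\sum \prod_{j=1}^{5}\|v_{*_j}\|_2^q\right)^{1/q},
\end{equation*}
and split the first factor into the two regions $|\mu_2| \leq 2|\mu_1|$ and $|\mu_2| > 2|\mu_1|$. In both, the divisor bound \eqref{num} yields $o(|\mu_1|^{0+})$ choices of $(n_1,n_3)$ once $\mu_1$ is fixed and $o(|\mu_2|^{0+})$ choices of $(m_1,m_3)$ once $\mu_2$ is fixed, while $n_2, m_2$ are determined up to $O(1)$ by the near-conservation relations. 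In the first subregion, parametrizing by $(\mu_1, M = \mu_1 \pm \mu_2)$ with $|M| \in [5^3|\mu_1|^{1-1/100},\, 2|\mu_1|]$ and summing first in $M$ then in $\mu_1$ gives
\begin{equation*}
\left(\sum_{|\mu_1|>N} \frac{|\mu_1|^{0+} \cdot |\mu_1|^{(1-1/100)(1-q')}}{|\mu_1|^{q'}}\right)^{1/q'} \sim N^{\,-2 + \frac{1}{100} + \frac{2}{q'} - \frac{1}{100q'} +},
\end{equation*}
which is the claimed exponent; in the second subregion, $|\mu_1 \pm \mu_2| \sim |\mu_2|$ cancels the divisor loss and produces the strictly better bound $N^{2/q' - 2 +}$.

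To finish the estimate for $N_0^{(3)}$, I would take the $l^q$ norm in $n$ of the second Hölder factor and apply Young's inequality twice (once in $m_1 - m_2 + m_3 \approx n_1$ and once in $n_1 - n_2 + n_3 \approx n$) to obtain $\|v\|_{M_{2,q}}^{5}$. The $N_r^{(3)}$ bound is produced by the same computation, except that one argument $v_{*_j}$ of $\tilde q^{2,t}_{i,n}$ is replaced by $R_2^t(v)(*_j) - R_1^t(v)(*_j)$; this block is controlled in $l^q L^2$ by $\|v\|_{M_{2,q}}^{3}$ via Lemma \ref{lem}, and absorbing it in the Young step inflates $\|v\|^{5}$ to $\|v\|^{7}$. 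The Lipschitz estimates follow from the standard multilinear telescoping identity $\prod_{j} v^{(j)} - \prod_{j} w^{(j)} = \sum_{k} v^{(1)} \cdots v^{(k-1)}(v^{(k)}-w^{(k)}) w^{(k+1)}\cdots w^{(5)}$, applied argument by argument and bounded by the same Hölder and counting scheme.

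The main obstacle I anticipate is controlling the sum in the regime $|\mu_2| \gg |\mu_1|$, where the divisor loss $|\mu_2|^{0+}$ is not dominated by $|\mu_1|^{0+}$. This is precisely why one works outside $C_1$: the defining inequality $|\mu_1 + \mu_2| > 5^3|\mu_1|^{1-1/100}$ forces $|\mu_1 \pm \mu_2| \sim |\mu_2|$ whenever $|\mu_2| > 2|\mu_1|$, converting the would-be divergent counting into genuine summability and making the case split close cleanly.
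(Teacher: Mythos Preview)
Your proposal is correct and follows essentially the same route as the paper's proof: Lemma \ref{fir34} for the pointwise bound, H\"older to separate counting from $v$-factors, the divisor bound \eqref{num} for the counting sum, and Young's inequality for the $l^q$ norm; for $N_r^{(3)}$ both you and the paper simply invoke Lemma \ref{lem} on the $R_2^t - R_1^t$ block. The only difference is that the paper dismisses the counting sum with ``by a very crude estimate it is not difficult to see that the first sum behaves like $N^{-2+\frac1{100}+\frac2{q'}-\frac1{100q'}+}$'', whereas you actually carry out the case split $|\mu_2| \lessgtr 2|\mu_1|$ and the $(\mu_1, M)$-parametrization---this is the computation the paper is suppressing, and your version of it is sound (the upper cutoff $|M|\leq 2|\mu_1|$ should strictly be $3|\mu_1|$, but it is irrelevant since the $M$-sum already converges from the lower bound alone).
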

\begin{proof}
Let us start with the operator $N_{0}^{(3)}$ and for simplicity of the presentation we will consider only the sum with the term $\tilde q^{2,t}_{1,n}$. As in the proof of Lemma \ref{fir3} we have from (\ref{num}) that for fixed $n$ and $\mu_{1}$ there are at most $o(|\mu_{1}|^{+})$ many choices for $n_{1}, n_{2}, n_{3}$ (such that $(n-n_{1})(n-n_{3})=\mu_{1}$) and for fixed $n_{1}$ and $\mu_{2}$ there are at most $o(|\mu_{2}|^{+})$ many choices for $m_{1}, m _{2}, m_{3}$ (such that $(n_{1}-m_{1})(n_{1}-m_{3})=\mu_{2}$). Thus, from Lemma \ref{fir34} we obtain
$$\sum_{A_{N}(n)^{c}}\sum_{C_{1}^{c}}\|\tilde q^{2,t}_{1,n}(v_{m_{1}},\bar{v}_{m_{2}},v_{m_{3}},\bar{v}_{n_{2}},v_{n_{3}})\|_{2}\lesssim$$
$$\sum_{A_{N}(n)^{c}}\sum_{C_{1}^{c}}\frac{\|v_{m_{1}}\|_{2}\|v_{m_{2}}\|_{2}\|v_{m_{3}}\|_{2}\|v_{n_{2}}\|_{2}\|v_{n_{3}}\|_{2}}{|n-n_{1}||n-n_{3}||(n-n_{1})(n-n_{3})+(n_{1}-m_{1})(n_{1}-m_{3})|}$$
and the RHS is equal to 
$$\sum_{A_{N}(n)^{c}}\sum_{C_{1}^{c}}\frac{\|v_{m_{1}}\|_{2}\|v_{m_{2}}\|_{2}\|v_{m_{3}}\|_{2}\|v_{n_{2}}\|_{2}\|v_{n_{3}}\|_{2}}{|\mu_{1}||\mu_{1}+\mu_{2}|}$$
which by H\"older's inequality is bounded above by
$$\Big(\sum_{A_{N}(n)^{c}}\sum_{C_{1}^{c}}\frac1{|\mu_{1}|^{q'}|\mu_{1}+\mu_{2}|^{q'}}|\mu_{1}|^{+}|\mu_{2}|^{+}\Big)^{\frac1{q'}}\Big(\sum_{A_{N}(n)^{c}}\sum_{C_{1}^{c}}\|v_{m_{1}}\|_{2}^{q}\|v_{m_{2}}\|_{2}^{q}\|v_{m_{3}}\|_{2}^{q}\|v_{n_{2}}\|_{2}^{q}\|v_{n_{3}}\|_{2}^{q}\Big)^{\frac1{q}}.$$
By a very crude estimate it is not difficult to see that the first sum behaves like the number $N^{-2+\frac1{100}+\frac2{q'}-\frac1{100q'}+}$. Then, by taking the $l^{q}$ norm and applying Young's inequality for convolutions we are done. For the operator $N_{r}^{(3)}$ the proof is the same but in addition we use Lemma \ref{lem} for the operator $R_{2}^{t}-R_{1}^{t}$. 
\end{proof}
The operator that remains to be estimated is defined as
\begin{equation}
\label{formm3}
N^{(3)}(v)(n):=\sum_{A_{N}(n)^{c}}\sum_{C_{1}^{c}}\Big(\tilde q^{2,t}_{1,n}(N_{1}^{t}(v)(m_{1}),\bar{v}_{m_{2}},v_{m_{3}},\bar{v}_{n_{2}},v_{n_{3}})+
\end{equation}
$$\tilde q^{2,t}_{1,n}(v_{m_{1}},\overline{N_{1}^{t}(v)(m_{2})},v_{m_{3}},\bar{v}_{n_{2}},v_{n_{3}})+\ldots+\tilde q^{2,t}_{3,n}(v_{n_{1}}\bar{v}_{n_{2}},v_{m_{1}},\bar{v}_{m_{2}},N_{1}^{t}(v)(m_{3}))\Big),$$
which is the same as $N_{r}^{(3)}$ but in the place of the operator $R_{2}^{t}-R_{1}^{t}$ we have $N_{1}^{t}.$ As before, we write
\begin{equation}
\label{formm4}
N^{(3)}=N_{1}^{(3)}+N_{2}^{(3)},
\end{equation}
where $N_{1}^{(3)}$ is the restriction of $N^{(3)}$ onto the set of frequencies
\begin{equation}
\label{setset2}
C_{2}=\{|\tilde\mu_{3}|\leq 7^{3}|\tilde\mu_{2}|^{1-\frac1{100}}\}\cup\{|\tilde\mu_{3}|\leq 7^{3}|\mu_{1}|^{1-\frac1{100}}\},
\end{equation}
where $\tilde\mu_{2}=\mu_{1}+\mu_{2}$ and $\tilde\mu_{3}=\mu_{1}+\mu_{2}+\mu_{3}.$ The following is true:

\begin{lemma}
\label{gg1}
$$\|N_{1}^{(3)}(v)\|_{l^{q}L^{2}}\lesssim N^{-2+\frac1{100}+\frac3{q'}-\frac2{100q'}+}\|v\|_{M_{2,q}}^{7},$$
and
$$\|N_{1}^{(3)}(v)-N_{1}^{(3)}(w)\|_{l^{q}L^{2}}\lesssim N^{-2+\frac1{100}+\frac3{q'}-\frac2{100q'}+}(\|v\|^{6}_{M_{2,q}}+\|w\|^{6}_{M_{2,q}})\|v-w\|_{M_{2,q}}.$$
\end{lemma}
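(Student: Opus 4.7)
The plan is to adapt the argument of Lemma \ref{gg} by incorporating one further application of $N_{1}^{t}$ together with the frequency restriction $C_{2}$. Every summand of $N_{1}^{(3)}(v)(n)$ has the shape $\tilde q^{2,t}_{i,n}$ with exactly one of its five arguments replaced by $N_{1}^{t}(v)(m_{\star})=\sum Q^{1,t}_{m_{\star}}(v_{l_{1}},\bar v_{l_{2}},v_{l_{3}})$, where $m_{\star}$ ranges over $\{m_{1},m_{2},m_{3}\}$. This brings in a third triple of frequencies $(l_{1},l_{2},l_{3})$ and a third phase $\mu_{3}=\Phi(m_{\star},l_{1},l_{2},l_{3})$; by construction the tuple lies in the set $C_{2}$ defined in (\ref{setset2}), which constrains $\tilde\mu_{3}=\mu_{1}+\mu_{2}+\mu_{3}$.

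First I would bound the $L^{2}$ norm of each summand by combining Lemma \ref{fir34} (and its permuted analogue (\ref{fir354})) with the estimate (\ref{imppo}) applied to the inner $Q^{1,t}_{m_{\star}}$. Since the latter is controlled by a clean product of $L^{2}$ norms with no phase information entering the denominator, this yields
\[
\|\tilde q^{2,t}_{i,n}(\ldots,Q^{1,t}_{m_{\star}}(v_{l_{1}},\bar v_{l_{2}},v_{l_{3}}),\ldots)\|_{2}\lesssim \frac{\|v_{l_{1}}\|_{2}\|v_{l_{2}}\|_{2}\|v_{l_{3}}\|_{2}\prod_{j\neq \star}\|v_{m_{j}}\|_{2}\cdot\|v_{n_{2}}\|_{2}\|v_{n_{3}}\|_{2}}{|\mu_{1}|\,|\mu_{1}+\mu_{2}|}.
\]
Next I would apply H\"older's inequality in the full index set to split off the deterministic weight $|\mu_{1}|^{-q'}|\mu_{1}+\mu_{2}|^{-q'}$; after taking the outer $l^{q}_{n}$ norm, Young's convolution inequality applied successively to the seven remaining free frequencies produces the factor $\|v\|_{M_{2,q}}^{7}$, exactly in the fashion of the seven-factor accounting of $N_{r}^{(3)}$ in Lemma \ref{gg}.

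The weight sum is then estimated by the divisor bound (\ref{num}): for fixed $(n,\mu_{1})$ there are $o(|\mu_{1}|^{+})$ choices of $(n_{1},n_{3})$; for fixed $(n_{1},\mu_{2})$ there are $o(|\mu_{2}|^{+})$ choices of $(m_{1},m_{3})$; and for fixed $(m_{\star},\mu_{3})$ there are $o(|\mu_{3}|^{+})$ choices of $(l_{1},l_{3})$. On $C_{2}$ we have $|\tilde\mu_{3}|\lesssim \max(|\tilde\mu_{2}|,|\mu_{1}|)^{1-\frac{1}{100}}$, so for fixed $(\mu_{1},\mu_{2})$ the number of admissible $\mu_{3}$ is of the same order; on $C_{1}^{c}$ we have $|\mu_{1}+\mu_{2}|>5^{3}|\mu_{1}|^{1-\frac{1}{100}}$, and on $A_{N}(n)^{c}$ we have $|\mu_{1}|>N$. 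Summing the resulting geometric series in $|\mu_{1}|$, $|\mu_{1}+\mu_{2}|$ and the $\mu_{3}$-range dictated by $C_{2}$, and comparing with the calculation in Lemma \ref{gg}, the extra inner summation contributes precisely the additional factor $N^{\frac{1}{q'}-\frac{1}{100q'}}$, producing the asserted $N$-exponent $-2+\frac{1}{100}+\frac{3}{q'}-\frac{2}{100q'}+$.

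The Lipschitz-in-data bound follows by the standard telescoping across the seven $v$-arguments, replacing one factor at a time by $v-w$ and applying the just-derived bound to each piece. The main obstacle will be the bookkeeping of the three phases $\mu_{1},\mu_{2},\mu_{3}$ under the combined restriction $(A_{N}(n)^{c},C_{1}^{c},C_{2})$: one has to balance the lower bound $|\mu_{1}+\mu_{2}|\gtrsim|\mu_{1}|^{1-\frac{1}{100}}$ from $C_{1}^{c}$ against the upper bound on $|\tilde\mu_{3}|$ from $C_{2}$ so that exactly the target $N$-exponent is extracted and not a worse one. Once this counting step is carried out carefully, the remainder is a direct adaptation of the proofs of Lemmas \ref{fir3} and \ref{gg}.
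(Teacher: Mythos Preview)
Your approach is correct and essentially identical to the paper's: combine Lemma \ref{fir34} with the trivial $L^{2}$ bound (\ref{imppo}) on the inner $Q^{1,t}$, apply H\"older to separate the weight $|\mu_{1}|^{-1}|\tilde\mu_{2}|^{-1}$, use the divisor count (\ref{num}) at all three generations together with the $C_{2}$ restriction to bound the number of admissible $\mu_{3}$, and finish with Young's inequality. One small bookkeeping remark: $N_{1}^{t}$ can act on \emph{any} of the five terminal arguments of $\tilde q^{2,t}_{i,n}$ (giving $3\cdot5=15$ summands), not only on the second-generation triple $\{m_{1},m_{2},m_{3}\}$; the paper also treats the two parts of $C_{2}$ separately (noting $|\mu_{3}|\sim|\tilde\mu_{2}|$ in the first case to absorb the factor $|\mu_{3}|^{+}$), but these points do not alter your argument.
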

\begin{proof}
Let us only consider the very first summand of the operator $N_{1}^{(3)}$, that is the operator $\tilde q_{1,n}^{2,t}$ with $N_{1}^{t}$ acting on its first variable, since for the other summands similar considerations apply. For the proof we use again the divisor counting argument. From (\ref{num}) it follows that for fixed $n$ and $\mu_{1}$ there are at most $o(|\mu_{1}|^{+})$ many choices for $n_{1}, n_{2}, n_{3}$ ($\mu_{1}=(n-n_{1})(n-n_{3})$, $n\approx n_{1}-n_{2}+n_{3}$). For fixed $n_{1}$ and $\mu_{2}$ there are at most $o(|\mu_{2}|^{+})$ many choices for $m_{1}, m_{2}, m_{3}$ ($\mu_{2}=(n_{1}-m_{1})(n_{1}-m_{3})$, $n_{1}\approx m_{1}-m_{2}+m_{3}$) and for fixed $m_{1}$ and $\mu_{3}$ there are at most $o(|\mu_{3}|^{+})$ many choices for $k_{1}, k_{2}, k_{3}$ ($\mu_{3}=(m_{1}-k_{1})(m_{1}-k_{3})$, $m_{1}\approx k_{1}-k_{2}+k_{3}$). 

First, let us assume that our frequencies satisfy $|\tilde\mu_{3}|\lesssim |\tilde\mu_{2}|^{1-\frac1{100}}$. Since, $\tilde\mu_{3}=\tilde\mu_{2}+\mu_{3}$ we have $|\mu_{3}|\sim|\tilde\mu_{2}|$. Moreover, for fixed $|\tilde\mu_{2}|$ (equivalently, for fixed $\mu_{1}, \mu_{2}$) there are at most $O(|\tilde\mu_{2}|^{1-\frac1{100}})$ many choices for $\tilde\mu_{3}$ and hence, for $\mu_{3}=\tilde\mu_{3}-\tilde\mu_{2}$. In addition, $|\mu_{2}|\lesssim\max(|\mu_{1}|, |\tilde\mu_{2}|)$ and we should recall that since we are on $C_{1}^{c}$ we have $|\tilde\mu_{2}|=|\mu_{1}+\mu_{2}|>5^{3}|\mu_{1}|^{1-\frac1{100}}>5^{3}N^{1-\frac1{100}}$. Then, the expression
$$\sum_{A_{N}(n)^{c}}\sum_{C_{1}^{c}}\sum_{C_{2}}\|\tilde q^{2,t}_{1,n}(Q^{1,t}_{m_{1}}(v_{k_{1}},\bar{v}_{k_{2}},v_{k_{3}}),\bar{v}_{m_{2}},v_{m_{3}},\bar{v}_{n_{2}},v_{n_{3}})\|_{2}$$
with the use of Lemma \ref{fir34} and a trivial bound of the operator $Q^{1,t}_{m_{1}}$ in $L^{2}$ (see proof of Lemma \ref{lemle}) we obtain the upper bound 
$$\sum_{A_{N}(n)^{c}}\sum_{C_{1}^{c}}\sum_{C_{2}}\frac{\|v_{k_{1}}\|_{2}\|v_{k_{2}}\|_{2}\|v_{k_{3}}\|_{2}\|v_{m_{2}}\|_{2}\|v_{m_{3}}\|_{2}\|v_{n_{2}}\|_{2}\|v_{n_{3}}\|_{2}}{|n-n_{1}||n-n_{3}||(n-n_{1})(n-n_{3})+(n_{1}-m_{1})(n_{1}-m_{3})|}=$$
$$\sum_{A_{N}(n)^{c}}\sum_{C_{1}^{c}}\sum_{C_{2}}\frac{\|v_{k_{1}}\|_{2}\|v_{k_{2}}\|_{2}\|v_{k_{3}}\|_{2}\|v_{m_{2}}\|_{2}\|v_{m_{3}}\|_{2}\|v_{n_{2}}\|_{2}\|v_{n_{3}}\|_{2}}{|\mu_{1}||\tilde\mu_{2}|}$$
and by H\"older's inequality we obtain
\begin{equation}
\label{usus}
\Big(\sum_{\substack{|\mu_{1}|>N \\ |\tilde\mu_{2}|>5^{3}N^{1-\frac1{100}}}}\frac{|\mu_{1}|^{+}|\mu_{2}|^{+}|\mu_{3}|^{+}|\tilde\mu_{2}|^{1-\frac1{100}}}{|\mu_{1}|^{q'}|\tilde\mu_{2}|^{q'}}\Big)^{\frac1{q'}}\times
\end{equation}
$$\Big(\sum_{A_{N}(n)^{c}}\sum_{C_{1}^{c}}\sum_{C_{2}}\|v_{k_{1}}\|_{2}^{q}\|v_{k_{2}}\|_{2}^{q}\|v_{k_{3}}\|_{2}^{q}\|v_{m_{2}}\|_{2}^{q}\|v_{m_{3}}\|_{2}^{q}\|v_{n_{2}}\|_{2}^{q}\|v_{n_{3}}\|_{2}^{q}\Big)^{\frac1{q}}.$$
The first sum is bounded above by
\begin{equation}
\label{estst}
\Big(\sum_{\substack{|\mu_{1}|>N \\ |\tilde\mu_{2}|>5^{3}N^{1-\frac1{100}}}}\frac1{|\mu_{1}|^{q'-\epsilon}|\tilde\mu_{2}|^{q'-1+\frac1{100}-\epsilon}}\Big)^{\frac1{q'}}\lesssim \Big(N^{3(1-\frac1{100})-q'(2-\frac1{100})+\frac1{100^{2}}+}\Big)^{\frac1{q'}}
\end{equation}
and by the use of Young's inequality at the second sum we are done. 

On the other hand, if $|\tilde\mu_{3}|\lesssim|\mu_{1}|^{1-\frac1{100}}$, then for fixed $\mu_{1}, \mu_{2}$ there are at most $O(|\mu_{1}|^{1-\frac1{100}})$ many choices for $\tilde\mu_{3}$ and hence for $\mu_{3}.$ After this observation, the calculations are exactly the same as before but the first sum of (\ref{usus}) becomes
\begin{equation}
\label{estst1}
\Big(\sum_{\substack{|\mu_{1}|>N \\ |\tilde\mu_{2}|>5^{3}N^{1-\frac1{100}}}}\frac1{|\mu_{1}|^{q'-1+\frac1{100}-\epsilon}|\tilde\mu_{2}|^{q'-\epsilon}}\Big)^{\frac1{q'}}\lesssim \Big(N^{3-\frac2{100}-q'(2-\frac1{100})+}\Big)^{\frac1{q'}}.
\end{equation}
Between the two exponents of $N$ in (\ref{estst}) and (\ref{estst1}) we see that (\ref{estst1}) is the dominating one and the proof is complete.
\end{proof}
To the remaining part, namely $N_{2}^{(3)}$, we have to apply the differentiation by parts technique again. Note that here we only look at frequencies such that
$$|\tilde\mu_{3}|=|\mu_{1}+\mu_{2}+\mu_{3}|>7^{3}|\mu_{1}|^{1-\frac1{100}}>7^{3}N^{1-\frac1{100}},$$
or equivalently, frequencies that are on the set $C_{2}^{c}$. Instead, we will present the general $J$th step of the iteration procedure and prove the required lemmata. To do this, we need to use the tree notation as it was introduced in \cite{GKO}.

\subsection{The Tree Notation and the Induction Step}
A tree $T$ is a finite, partially ordered set with the following properties:
\begin{itemize}
\item For any $a_{1}, a_{2}, a_{3}, a_{4}\in T$ if $a_{4}\leq a_{2}\leq a_{1}$ and $a_{4}\leq a_{3}\leq a_{1}$ then $a_{2}\leq a_{3}$ or $a_{3}\leq a_{2}$. 
\item There exists a maximum element $r\in T$, that is $a\leq r$ for all $a\in T$ which is called the root. 
\end{itemize}
We call the elements of $T$ the nodes of the tree and in this content we will say that $b\in T$ is a child of $a\in T$ (or equivalently, that $a$ is the parent of $b$) if $b\leq a, b\neq a$ and for all $c\in T$ such that $b\leq c\leq a$ we have either $b=c$ or $c=a$. 

A node $a\in T$ is called terminal if it has no children. A nonterminal node $a\in T$ is a node with exactly $3$ children $a_{1}$, the left child, $a_{2}$, the middle child, and $a_{3}$, the right child. We define the sets
\begin{equation}
\label{setsetset}
T^{0}=\{\mbox{all nonterminal nodes}\},
\end{equation}
and
\begin{equation}
\label{setsetset1}
T^{\infty}=\{\mbox{all terminal nodes}\}.
\end{equation}
Obviously, $T=T^{0}\cup T^{\infty}$, $T^{0}\cap T^{\infty}=\emptyset$ and if $|T^{0}|=j\in\Z_{+}$ we have $|T|=3j+1$ and $|T^{\infty}|=2j+1.$ We denote the collection of trees with $j$ parental nodes by
\begin{equation}
\label{setsetset2}
T(j)=\{T\ \mbox{is a tree with}\ |T|=3j+1\}.
\end{equation}
Next, we say that a sequence of trees $\{T_{j}\}_{j=1}^{J}$ is a chronicle of $J$ generations if:
\begin{itemize}
\item $T_{j}\in T(j)$ for all $j=1, 2, \ldots, J$.
\item $T_{j+1}$ is obtained by changing one of the terminal nodes of $T_{j}$ into a nonterminal node with exactly $3$ children, for all $j=1, 2, \ldots, J-1$.
\end{itemize}
Let us also denote by $\mathcal I(J)$ the collection of trees of the $J$th generation. It is easily checked by an induction argument that
\begin{equation}
\label{setsetset3}
|\mathcal I(J)|=1\cdot 3\cdot 5\ldots(2J-1)=:(2J-1)!!.
\end{equation}
Given a chronicle $\{T_{j}\}_{j=1}^{J}$ of $J$ generations we refer to $T_{J}$ as an ordered tree of the $J$th generation. We should keep in mind that the notion of ordered trees comes with associated chronicles. It includes not only the shape of the tree but also how it "grew". 

Given an ordered tree $T$ we define an index function $n:T\to\Z$ such that
\begin{itemize}
\item $n_{a}\approx n_{a_{1}}-n_{a_{2}}+n_{a_{3}}$ for all $a\in T^{0}$, where $a_{1}, a_{2}, a_{3}$ are the children of $a$,
\item $n\not\approx n_{a_{1}}$ and $n\not\approx n_{a_{3}}$, for all $a\in T^{0}$,
\item $|\mu_{1}|:=2|n_{r}-n_{r_{1}}||n_{r}-n_{r_{3}}|>N$, where $r$ is the root of $T$,
\end{itemize}
and we denote the collection of all such index functions by $\mathcal R(T)$. 

For the sake of completeness, as it was done in \cite{GKO}, given an ordered tree $T$ with the chronicle $\{T_{j}\}_{j=1}^{J}$ and associated index functions $n\in\mathcal R(T)$, we need to keep track of the generations of frequencies. Fix an $n\in\mathcal R(T)$ and consider the very first tree $T_{1}$. Its nodes are the root $r$ and its children $r_{1}, r_{2}, r_{3}$. We define the first generation of frequencies by 
$$(n^{(1)},n_{1}^{(1)},n_{2}^{(1)},n_{3}^{(1)}):=(n_{r},n_{r_{1}},n_{r_{2}},n_{r_{3}}).$$
From the definition of the index function we have
$$n^{(1)}\approx n_{1}^{(1)}-n_{2}^{(1)}+n_{3}^{(1)},\ n_{1}^{(1)}\not\approx n^{(1)}\not\approx n_{3}^{(1)}.$$
The ordered tree $T_{2}$ of the second generation is obtained from $T_{1}$ by changing one of its terminal nodes $a=r_{k}\in T_{1}^{\infty}$ for some $k=1,2,3$ into a nonterminal node. Then, the second generation of frequencies is defined by
$$(n^{(2)},n_{1}^{(2)},n_{2}^{(2)},n_{3}^{(2)}):=(n_{a},n_{a_{1}},n_{a_{2}},n_{a_{3}}).$$
Thus, we have $n^{(2)}=n_{k}^{(1)}$ for some $k=1,2,3$ and from the definition of the index function we have
$$n^{(2)}\approx n_{1}^{(2)}-n_{2}^{(2)}+n_{3}^{(2)},\ n_{1}^{(2)}\not\approx n^{(2)}\not\approx n_{3}^{(2)}.$$
This should be compared with what happened in the calculations we presented before when passing from the first step of the iteration process into the second step. Every time we apply the differentiation by parts technique we introduce a new set of frequencies. 

After $j-1$ steps, the ordered tree $T_{j}$ of the $j$th generation is obtained from $T_{j-1}$ by changing one of its terminal nodes $a\in T_{j-1}^{\infty}$ into a nonterminal node. Then, the $j$th generation frequencies are defined as 
$$(n^{(j)},n_{1}^{(j)},n_{2}^{(j)},n_{3}^{(j)}):=(n_{a},n_{a_{1}},n_{a_{2}},n_{a_{3}}),$$
and we have $n^{(j)}=n_{k}^{(m)}(=n_{a})$ for some $m=1,2,\ldots,j-1$ and $k=1,2,3$, since this corresponds to the frequency of some terminal node in $T_{j-1}$. In addition, from the definition of the index function we have
$$n^{(j)}\approx n_{1}^{(j)}-n_{2}^{(j)}+n_{3}^{(j)},\ n_{1}^{(j)}\not\approx n^{(j)}\not\approx n_{3}^{(j)}.$$
Finally, we use $\mu_{j}$ to denote the corresponding phase factor introduced at the $j$th generation. That is,
\begin{equation}
\label{muuu}
\mu_{j}=2(n^{(j)}-n_{1}^{(j)})(n^{(j)}-n_{3}^{(j)}),
\end{equation}
and we also introduce the quantities
\begin{equation}
\label{qqq}
\tilde\mu_{J}=\sum_{j=1}^{J}\mu_{j},\ \hat{\mu}_{J}=\prod_{j=1}^{J}\tilde\mu_{j}.
\end{equation}
We should keep in mind that everytime we apply differentiation by parts and split the operators, we need to control the new frequencies that arise from this procedure. For this reason we need to define the sets (see (\ref{setset1}) and (\ref{setset2})):
\begin{equation}
\label{sesee}
C_{J}:=\{|\tilde\mu_{J+1}|\leq(2J+3)^{3}|\tilde\mu_{J}|^{1-\frac1{100}}\}\cup\{|\tilde\mu_{J+1}|\leq(2J+3)^{3}|\mu_{1}|^{1-\frac1{100}}\}.
\end{equation}

Let us see how to use this notation and terminology in our calculations. On the very first step, $J=1$, we have only one tree, the root node $r$ and its three children $r_{1}, r_{2}, r_{3}$ (sometimes, when it is clear from the context, we will identify the nodes and the frequencies assigned to them, that is, we have the root $n=n_{r}$ and its three children $n_{r_{1}}=n_{1}, n_{r_{2}}=n_{2}, n_{r_{3}}=n_{3}$) and we have only one operator that needs to be controlled in order to proceed further, namely $\tilde q^{1,t}_{n}:=\tilde Q^{1,t}_{n}$. 

On the second step, $J=2$, we have three operators $\tilde q^{2,t}_{n,n_{1}}:=\tilde q^{2,t}_{1,n}, \tilde q^{2,t}_{n,n_{2}}:=\tilde q^{2,t}_{2,n}, \tilde q^{2,t}_{n,n_{3}}:=\tilde q^{2,t}_{3,n}$ that play the same role as $\tilde q^{1,t}_{n}$ did for the first step. Let us observe that for each one of these operators we must have estimates on their $L^{2}$ norms in order to be able and continue the iteration. These estimates were provided by Lemmata \ref{fir} and \ref{fir34}. 

On the general $J$th step we will have $|\mathcal I(J)|$ operators of the $\tilde q^{J,t}_{T^0,\mathbf n}$ "type" each one corresponding to one of the ordered trees of the $J$th generation, $T\in T(J)$, where $\mathbf n$ is an arbitrary fixed index function on $T$. We have the subindices $T^0$ and $\mathbf n$ because each one of these operators has Fourier transform supported on the cubes with centers the frequencies assigned to the nodes that belong to $T^0$. 

Let us denote by $T_{\alpha}$ all the nodes of the ordered tree $T$ that are descendants of the node $\alpha\in T^{0}$, i.e. $T_{\alpha}=\{\beta\in T:\beta\leq\alpha,\ \beta\neq\alpha\}$.

We also need to define the principal and final "signs" of a node $a\in T$ which are functions from the tree $T$ into the set $\{\pm1\}$:
\begin{equation}
\label{signsign}
\mbox{psgn}(a)=\begin{cases}
+1,\ a\ \mbox{is not the middle child of his father}\\
+1,\ a=r,\ \mbox{the root node}\\
-1,\ a\ \mbox{is the middle child of his father}
\end{cases}
\end{equation}
\begin{equation}
\label{signsignsign}
\mbox{fsgn}(a)=\begin{cases}
+1,\ \mbox{psgn}(a)=+1\ \mbox{and}\ a\ \mbox{has an even number of middle predecessors}\\
-1,\ \mbox{psgn}(a)=+1\ \mbox{and}\ a\ \mbox{has an odd number of middle predecessors}\\
-1,\ \mbox{psgn}(a)=-1\ \mbox{and}\ a\ \mbox{has an even number of middle predecessors}\\
+1,\ \mbox{psgn}(a)=-1\ \mbox{and}\ a\ \mbox{has an odd number of middle predecessors},
\end{cases}
\end{equation}
where the root node $r\in T$ is not considered a middle father. 

The operators $\tilde q^{J,t}_{T^0,\mathbf n}$ are defined through their Fourier transforms as
\begin{equation}
\label{oops}
\mathcal F(\tilde q^{J,t}_{T^0,\mathbf n}(\{w_{n_\beta}\}_{\beta\in T^{\infty}}))(\xi)=e^{-it\xi^{2}}\mathcal F(R^{J,t}_{T^0,\mathbf n}(\{e^{-it\partial_{x}^{2}}w_{n_\beta}\}_{\beta\in T^{\infty}}))(\xi),
\end{equation}
where the operator $R^{J,t}_{T^0,\mathbf n}$ acts on the functions $\{w_{n_\beta}\}_{\beta\in T^{\infty}}$ as
\begin{equation}
\label{oops1}
R^{J,t}_{T^0,\mathbf n}(\{w_{n_\beta}\}_{\beta\in T^{\infty}})(x)=\int_{\R^{2J+1}}K^{(J)}_{T^0}(x,\{x_{\beta}\}_{\beta\in T^{\infty}})\Big[\otimes_{\beta\in T^{\infty}}w_{n_\beta}(x_{\beta})\Big]\ \prod_{\beta\in T^{\infty}} dx_{\beta},
\end{equation}
and the Kernel $K^{(J)}_{T^0,\mathbf n}$ is defined as 
\begin{equation}
\label{oopss}
K^{(J)}_{T^0,\mathbf n}(x,\{x_{\beta}\}_{\beta\in T^{\infty}})=\mathcal F^{-1}(\rho^{(J)}_{T^{0},\mathbf n})(\{x-x_{\beta}\}_{\beta\in T^{\infty}}).
\end{equation}
Here is the formula for the function $\rho^{(J)}_{T^0,\mathbf n}$ with ($|T^{\infty}|=2J+1$)-variables, $\xi_{\beta}$, $\beta\in T^{\infty}$:
\begin{equation}
\label{jc}
\rho^{(J)}_{T^0,\mathbf n}(\{\xi_{\beta}\}_{\beta\in T^{\infty}})=\Big[\prod_{\alpha\in T^0}\sigma_{n_{\alpha}}\Big(\sum_{\beta\in T^{\infty}\cap T_{\alpha}}\mbox{fsgn}(\beta)\ \xi_{\beta}\Big)\Big]\frac1{\hat{\mu}_{T}},
\end{equation}
where we denote by 
\begin{equation}
\label{yeah}
\hat{\mu}_{T}=\prod_{\alpha\in T^0}\tilde\mu_{\alpha},\ \tilde\mu_{\alpha}=\sum_{\beta\in T^{0}\setminus T_{\alpha}}\mu_{\beta},
\end{equation}
and for $\beta\in T^{0}$ we have
\begin{equation}
\label{yyeah}
\mu_{\beta}=2(\xi_{\beta}-\xi_{\beta_{1}})(\xi_{\beta}-\xi_{\beta_{3}}),
\end{equation}
where we impose the relation $\xi_{\alpha}=\xi_{\alpha_{1}}-\xi_{\alpha_{2}}+\xi_{\alpha_{3}}$ for every $\alpha\in T^{0}$ that appears in the calculations until we reach the terminal nodes of $T^{\infty}$. This is because in the definition of the function $ \rho^{J,t}_{T^0}$ we need the variables "$\xi$" to be assigned only at the terminal nodes of the tree $T$. We use the notation $\mu_{\beta}$ in similarity to $\mu_{j}$ of equation (\ref{muuu}) because this is the "continuous" version of the discrete case. In addition, the variables $\xi_{\alpha_{1}}, \xi_{\alpha_{2}}, \xi_{\alpha_{3}}$ that appear in the expression (\ref{jc}) are supported in such a way that $\xi_{\alpha_{1}}\approx n_{\alpha_{1}}, \xi_{\alpha_{2}}\approx n_{\alpha_{2}}, \xi_{\alpha_{3}}\approx n_{\alpha_{3}}$. This is because the functions $\sigma_{n_{\alpha}}$ are supported in such a way. Therefore, $|\hat{\mu}_{T}|\sim|\hat{\mu}_{J}|$. 

For the induction step of our iteration process it is easy to check that the following Lemma is true, which should be compared with Lemmata \ref{fir} and \ref{fir34}:

\begin{lemma}
\label{indu}
\begin{equation}
\|\tilde q^{J,t}_{T^0,\mathbf n}(\{v_{n_\beta}\}_{\beta\in T^{\infty}})\|_{2}\lesssim\Big(\prod_{\beta\in T^{\infty}}\|v_{n_\beta}\|_{2}\Big)\frac1{|\hat{\mu}_{T}|},
\end{equation}
for every tree $T\in T(J)$ and index function $\mathbf n\in\mathcal R(T)$.
\end{lemma}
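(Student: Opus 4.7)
The plan is to proceed exactly as in the proofs of Lemmata \ref{fir} and \ref{fir34}, which are the cases $J=1$ and $J=2$; the argument for general $J$ simply replaces the three (respectively five) terminal variables by the $2J+1$ variables $\{\xi_\beta\}_{\beta\in T^\infty}$ associated with the leaves of the ordered tree $T$. By (\ref{oops}) and the $L^2$-unitarity of the Schr\"odinger group, it suffices to bound $\|R^{J,t}_{T^0,\mathbf n}(\{u_{n_\beta}\})\|_2$ where $u_{n_\beta}=e^{-it\partial_x^2}v_{n_\beta}$.

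Pairing against an arbitrary $g\in L^2$ with $\|g\|_2\neq 0$, applying Plancherel, and using the parent-child relations $\xi_\alpha=\xi_{\alpha_1}-\xi_{\alpha_2}+\xi_{\alpha_3}$ at every nonterminal node to express every intermediate variable as a signed sum of terminal ones, one arrives at an integral over $\{\xi_\beta\}_{\beta\in T^\infty}$ whose integrand is $\hat g(\sum_\beta\mbox{fsgn}(\beta)\xi_\beta)\,\rho^{(J)}_{T^0,\mathbf n}(\{\xi_\beta\})\prod_\beta \widehat{u^{(\beta)}}(\xi_\beta)$, the conjugations on the $u^{(\beta)}$ being dictated by $\mbox{fsgn}$. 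The change of variables from the intermediate-plus-terminal parametrisation to the terminal-only parametrisation has trivial Jacobian.

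H\"older's inequality then factors out $\|\rho^{(J)}_{T^0,\mathbf n}\|_\infty\prod_\beta\|v_{n_\beta}\|_2$, and the remaining Cauchy-Schwarz factor $\bigl(\int |\hat g(\sum_\beta\mbox{fsgn}(\beta)\xi_\beta)|^2\prod_\beta \mathbf 1_{I_{n_\beta}}(\xi_\beta)\,d\xi_\beta\bigr)^{1/2}$ is bounded by $\|g\|_2$ after one more change of variable (replacing any one $\xi_{\beta_0}$ by $\xi=\sum_\beta\mbox{fsgn}(\beta)\xi_\beta$), since the other $2J$ variables are confined to unit intervals. To finish I estimate $\|\rho^{(J)}_{T^0,\mathbf n}\|_\infty\lesssim 1/|\hat\mu_T|$: from the explicit formula (\ref{jc}) one reads off $|\rho^{(J)}_{T^0,\mathbf n}|\leq 1/|\hat\mu_T|$ pointwise (each $\sigma_{n_\alpha}$ being an indicator function), and on the support every $\xi_\beta$ lies in the unit interval $I_{n_\beta}$, so each continuous $\mu_\alpha$ differs by $O(1)$ from its integer counterpart in (\ref{muuu}); under the lower bounds on $|\tilde\mu_\alpha|$ enforced by the sets $A_N(n)^c$ and $C_j^c$, these perturbations are harmless and $|\hat\mu_T|$ evaluated at the continuous variables is comparable to the discrete $|\hat\mu_J|$ in (\ref{qqq}).

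The step I expect to take the most care is the bookkeeping of signs: verifying that the signed sum $\xi=\sum_\beta\mbox{fsgn}(\beta)\xi_\beta$ really emerges from iterated application of the parent-child identity (which is precisely what motivates the inductive definition of $\mbox{fsgn}$) and that the factors $\sigma_{n_\alpha}$ at intermediate nodes are automatically accommodated, up to bounded overlap, by the terminal localisations. No step poses a genuine analytic difficulty; the induction merely transports the $J=1,2$ arguments forward one generation at a time, and the uniformity in $T\in T(J)$ and $\mathbf n\in\mathcal R(T)$ is immediate because every implicit constant depends only on the absolute $L^\infty$-norms of the $\sigma_{n_\alpha}$ and on the Jacobians of the changes of variables, neither of which sees $T$ or $\mathbf n$.
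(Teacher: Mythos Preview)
Your proposal is correct and follows exactly the approach the paper intends: the paper does not give a separate proof of Lemma \ref{indu} but merely says it ``is easy to check'' by comparison with Lemmata \ref{fir} and \ref{fir34}, and you have spelled out precisely that comparison---duality against $g\in L^2$, reduction to $\|\rho^{(J)}_{T^0,\mathbf n}\|_\infty$, and the observation that on the support the continuous phase factors are comparable to their discrete counterparts. Your remarks on the bookkeeping of $\mbox{fsgn}$ and the role of the intermediate cutoffs $\sigma_{n_\alpha}$ are exactly the details the paper leaves implicit in formulas (\ref{oops})--(\ref{jc}).
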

Given an index function $\mathbf n$ and $2J+1$ functions $\{v_{n_\beta}\}_{\beta\in T^{\infty}}$ and $\alpha\in T^{\infty}$ we define the action of the operator $N_{1}^{t}$ (see (\ref{main10})) on the set $\{v_{n_\beta}\}_{\beta\in T^{\infty}}$ to be the same set as before but with the difference that we have substituted the function $v_{n_\alpha}$ by the new function $N_{1}^{t}(v)(n_\alpha)$. We will denote this new set of functions $N_{1}^{t,\alpha}(\{v_{n_\beta}\}_{\beta\in T^{\infty}})$. Similarly, the action of the operator $R_{2}^{t}-R_{1}^{t}$ (see (\ref{main9})) on the set of functions $\{v_{n_\beta}\}_{\beta\in T^{\infty}}$ will be denoted by $(R_{2}^{t,\alpha}-R_{1}^{t,\alpha})(\{v_{n_\beta}\}_{\beta\in T^{\infty}})$. 

The operator of the $J$th step, $J\geq 2$, that we want to estimate is given by the formula:
\begin{equation}
\label{fina}
N_{2}^{(J)}(v)(n):=\sum_{T\in T(J-1)}\sum_{\alpha\in T^{\infty}}\sum_{\substack{\mathbf n\in\mathcal R(T)\\ \mathbf n_{r}=n}}\tilde q^{J-1,t}_{T^0}(N_{1}^{t,\alpha}(\{v_{n_\beta}\}_{\beta\in T^{\infty}})).
\end{equation}
Applying differentiation by parts on the Fourier side (keep in mind that from the splitting procedure we are on the sets $A_{N}(n)^{c},C_{1}^{c},\ldots,C_{J-1}^{c}$) we obtain the expression
\begin{equation}
\label{fina1}
N_{2}^{(J)}(v)(n)=\partial_{t}(N_{0}^{(J+1)}(v)(n))+N_{r}^{(J+1)}(v)(n)+N^{(J+1)}(v)(n), 
\end{equation}
where
\begin{equation}
\label{fina2}
N_{0}^{(J+1)}(v)(n):=\sum_{T\in T(J)}\sum_{\substack{\mathbf n\in\mathcal R(T)\\ \mathbf n_{r}=n}}\tilde q^{J,t}_{T^0,\mathbf n}(\{v_{n_\beta}\}_{\beta\in T^{\infty}}),
\end{equation}
and
\begin{equation}
\label{fina3}
N_{r}^{(J+1)}(v)(n):=\sum_{T\in T(J)}\sum_{\alpha\in T^{\infty}}\sum_{\substack{\mathbf n\in\mathcal R(T)\\ \mathbf n_{r}=n}}\tilde q^{J,t}_{T^0,\mathbf n}((R^{t,\alpha}_{2}-R^{t,\alpha}_{1})(\{v_{n_{\beta}}\}_{\beta\in T^{\infty}})),
\end{equation}
and
\begin{equation}
\label{fina4}
N^{(J+1)}(v)(n):=\sum_{T\in T(J)}\sum_{\alpha\in T^{\infty}}\sum_{\substack{\mathbf n\in\mathcal R(T)\\ \mathbf n_{r}=n}}\tilde q^{J,t}_{T^0,\mathbf n}(N_{1}^{t,\alpha}(\{v_{n_{\beta}}\}_{\beta\in T^{\infty}})).
\end{equation}
We also split the operator $N^{(J+1)}$ as the sum
\begin{equation}
\label{fina5}
N^{(J+1)}=N_{1}^{(J+1)}+N_{2}^{(J+1)},
\end{equation}
where $N_{1}^{(J+1)}$ is the restriction of $N^{(J+1)}$ onto $C_{J}$ and $N_{2}^{(J+1)}$ onto $C_{J}^{c}.$ First, we generalise Lemma \ref{gg} by estimating the operators $N_{0}^{(J+1)}$ and $N_{r}^{(J+1)}$:

\begin{lemma}
\label{finaal}
$$\|N_{0}^{(J+1)}(v)\|_{l^{q}L^{2}}\lesssim N^{-\frac{(q'-1)}{q'}J+\frac{(q'-1)}{100q'}(J-1)+}\|v\|_{M_{2,q}}^{2J+1},$$
and
$$\|N_{0}^{(J+1)}(v)-N_{0}^{(J+1)}(w)\|_{l^{q}L^{2}}\lesssim N^{-\frac{(q'-1)}{q'}J+\frac{(q'-1)}{100q'}(J-1)+}(\|v\|_{M_{2,q}}^{2J}+\|w\|_{M_{2,q}}^{2J})\|v-w\|_{M_{2,q}}.$$

$$\|N_{r}^{(J+1)}(v)\|_{l^{q}L^{2}}\lesssim N^{-\frac{(q'-1)}{q'}J+\frac{(q'-1)}{100q'}(J-1)+}\|v\|_{M_{2,q}}^{2J+3},$$
and
$$\|N_{r}^{(J+1)}(v)-N_{r}^{(J+1)}(w)\|_{l^{q}L^{2}}\lesssim N^{-\frac{(q'-1)}{q'}J+\frac{(q'-1)}{100q'}(J-1)+}(\|v\|_{M_{2,q}}^{2J+2}+\|w\|_{M_{2,q}}^{2J+2})\|v-w\|_{M_{2,q}}.$$
\end{lemma}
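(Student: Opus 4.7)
The plan is to extend the argument of Lemma~\ref{gg} (which handles the case $J=2$) to arbitrary $J\geq 2$ by fully exploiting the tree formalism of Subsection~$2.2$. I focus on the bound for $N_0^{(J+1)}$; the corresponding statement for $N_r^{(J+1)}$ follows from it by inserting the estimate $\|R_2^t-R_1^t\|_{l^q L^2}\lesssim\|v\|_{M_{2,q}}^3$ from Lemma~\ref{lem} at the terminal node where $R_2^{t,\alpha}-R_1^{t,\alpha}$ acts, which accounts for the extra $\|v\|_{M_{2,q}}^2$ in the claimed bound. The Lipschitz differences will follow by a routine telescoping: $N_0^{(J+1)}(v)-N_0^{(J+1)}(w)$ is a sum of $2J+1$ multilinear forms each containing exactly one factor $v_{n_\beta}-w_{n_\beta}$, with the remaining factors being either $v$ or $w$, so the same estimate applies term-by-term to give the stated difference bound.

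Step~1 (single-tree reduction). Since $|\mathcal{I}(J)|=(2J-1)!!$ depends only on $J$, it is enough to estimate the contribution of one fixed ordered tree $T\in T(J)$. Lemma~\ref{indu} yields, for any admissible $\mathbf n\in\mathcal R(T)$ with $\mathbf n_r=n$,
\begin{equation*}
\|\tilde q^{J,t}_{T^0,\mathbf n}(\{v_{n_\beta}\}_{\beta\in T^\infty})\|_{2}\lesssim\frac{1}{|\hat\mu_T|}\prod_{\beta\in T^\infty}\|v_{n_\beta}\|_{2}.
\end{equation*}
Summing over $\mathbf n$ and applying H\"older's inequality with dual exponents $(q',q)$ in the counting variable I obtain
\begin{equation*}
\|N_0^{(J+1)}(v)(n)\|_{2}\lesssim\Big(\sum_{\mathbf n:\,\mathbf n_r=n}\frac{1}{|\hat\mu_T|^{q'}}\Big)^{\frac1{q'}}\Big(\sum_{\mathbf n:\,\mathbf n_r=n}\prod_{\beta\in T^\infty}\|v_{n_\beta}\|_{2}^{q}\Big)^{\frac1{q}}.
\end{equation*}

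Step~2 (the $N$-power). I parameterise the first sum by the phase quantities $\mu_1,\dots,\mu_J$ generated along the chronicle. Being on $A_N(n)^c\cap C_1^c\cap\cdots\cap C_{J-1}^c$ forces $|\mu_1|>N$ and $|\tilde\mu_j|>(2j+1)^3|\tilde\mu_{j-1}|^{1-\frac1{100}}$ for $j=2,\dots,J$. By the divisor bound (\ref{num}), once $n^{(j)}$ and $\mu_j$ are fixed, the triple $(n_1^{(j)},n_2^{(j)},n_3^{(j)})$ is determined up to $O(|\mu_j|^{+})$ choices, whereas the restriction coming from $C_{j-1}^c$ allows at most $O(|\tilde\mu_{j-1}|^{1-\frac1{100}})$ admissible values of $\tilde\mu_j$, and hence of $\mu_j=\tilde\mu_j-\tilde\mu_{j-1}$. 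Using $|\mu_j|\lesssim|\tilde\mu_j|$ in this regime, the first H\"older factor is dominated by
\begin{equation*}
\Big(\sum_{|\mu_1|>N}\frac{|\mu_1|^{+}}{|\mu_1|^{q'}}\prod_{j=2}^{J}\sum_{|\tilde\mu_j|>|\tilde\mu_{j-1}|^{1-\frac1{100}}}\frac{|\tilde\mu_{j-1}|^{1-\frac1{100}}\,|\tilde\mu_j|^{+}}{|\tilde\mu_j|^{q'}}\Big)^{\frac1{q'}},
\end{equation*}
and a straightforward crude geometric summation telescoping through the $\tilde\mu_j$'s evaluates to exactly $N^{-\frac{(q'-1)}{q'}J+\frac{(q'-1)}{100q'}(J-1)+}$, matching the exponent computed for $J=2$ in Lemma~\ref{gg}.

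Step~3 (the norm factor) and main obstacle. The second H\"older factor, summed in $n$, is a nested convolution of $2J+1$ copies of the $\ell^1(\Z)$ sequence $f(k)=\|v_k\|_{2}^{q}$, coupled through the constraints $n^{(j)}\approx n_1^{(j)}-n_2^{(j)}+n_3^{(j)}$ at each parental node. Iterating Young's inequality for convolutions on $\ell^{1}$ once per parental node bounds this sum by $\|f\|_{1}^{2J+1}=\|v\|_{M_{2,q}}^{q(2J+1)}$, and taking the $1/q$-th power produces the required factor $\|v\|_{M_{2,q}}^{2J+1}$. I expect the principal technical difficulty to be in Step~2, namely verifying that the worst of the two alternatives defining each $C_j$ still yields a linear-in-$J$ exponent of $N$; this linearity is precisely what is needed in the next subsection so that the series $\sum_{J\geq 1}\|N_0^{(J+1)}\|_{l^{q}L^{2}}$ converges whenever $q<3$, and it is therefore the decisive estimate for the entire iteration scheme.
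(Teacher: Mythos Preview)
Your Steps~1 and~3 are correct and coincide with the paper's argument. The gap is in Step~2: you have reversed the role of $C_{j-1}^c$. Recall from (\ref{sesee}) that $C_{j-1}$ is the set where $|\tilde\mu_j|\leq(2j+1)^3\max(|\tilde\mu_{j-1}|,|\mu_1|)^{1-\frac1{100}}$, so its \emph{complement} $C_{j-1}^c$ imposes the \emph{lower} bound $|\tilde\mu_j|>(2j+1)^3|\mu_1|^{1-\frac1{100}}>(2j+1)^3 N^{1-\frac1{100}}$. It gives no upper bound on $|\tilde\mu_j|$ whatsoever, and hence does not limit the number of admissible values of $\tilde\mu_j$ to $O(|\tilde\mu_{j-1}|^{1-\frac1{100}})$. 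The factor $|\tilde\mu_{j-1}|^{1-\frac1{100}}$ you inserted in the numerator of your displayed sum is therefore unjustified; that multiplicity count is the mechanism used in Lemma~\ref{finaal2} for $N_1^{(J+1)}$ (where one is on $C_J$, not $C_J^c$), and you have imported it into the wrong lemma.

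The correct way to run Step~2 is the one in the paper: change summation variables from $(\mu_1,\dots,\mu_J)$ to $(\tilde\mu_1,\dots,\tilde\mu_J)$, use $|\mu_j|\lesssim\max(|\tilde\mu_{j-1}|,|\tilde\mu_j|)$ to absorb the divisor factors $|\mu_j|^{+}$, and then sum each $\tilde\mu_j$ \emph{freely} over $|\tilde\mu_j|>(2j+1)^3 N^{1-\frac1{100}}$ (and $|\tilde\mu_1|=|\mu_1|>N$). Since $q'>1$, each such sum $\sum_{|\tilde\mu_j|\gtrsim N^{1-\frac1{100}}}|\tilde\mu_j|^{-q'+}$ converges and contributes a factor $\sim N^{(1-\frac1{100})(1-q')+}$; taking the product over $j$ and the $1/q'$-th root yields exactly $N^{-\frac{(q'-1)}{q'}J+\frac{(q'-1)}{100q'}(J-1)+}$. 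The summability here is driven purely by the decay $|\tilde\mu_j|^{-q'}$ against a lower threshold, not by any cardinality bound on $\tilde\mu_j$. Once Step~2 is repaired in this way, your Steps~1 and~3 combine with it to give the paper's proof verbatim, including the double-factorial decay (\ref{factori}) that absorbs the tree count $|\mathcal I(J)|$.
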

\begin{proof}
As in the proof of Lemma \ref{gg} for fixed $n^{(j)}$ and $\mu_{j}$ there are at most $o(|\mu_{j}|^{+})$ many choices for $n_{1}^{(j)},n_{2}^{(j)},n_{3}^{(j)}$. In addition, let us observe that $\mu_{j}$ is determined by $\tilde\mu_{1},\ldots,\tilde\mu_{j}$ and $|\mu_{j}|\lesssim\max(|\tilde\mu_{j-1}|,|\tilde\mu_{j}|)$, since $\mu_{j}=\tilde\mu_{j}-\tilde\mu_{j-1}.$ Then, for a fixed tree $T\in T(J)$, by Lemma \ref{indu} the estimate for the operator $\tilde q^{J,t}_{T^0,\mathbf n}$ is as follows (remember that $|\hat{\mu}_{T}|\sim|\hat{\mu}_{J}|=\prod_{k=1}^{J}|\tilde\mu_{k}|$):
$$\sum_{\substack{\mathbf n\in\mathcal R(T)\\ \mathbf n_{r}=n}}\|\tilde q^{J,t}_{T^0,\mathbf n}(\{v_{\beta}\}_{\beta\in T^\infty})\|_{2}\lesssim\sum_{\substack{\mathbf n\in\mathcal R(T)\\ \mathbf n_{r}=n}}\Big(\prod_{\beta\in T^{\infty}}\|v_{n_\beta}\|_{2}\Big)\Big(\prod_{k=1}^{J}\frac1{|\tilde\mu_{k}|}\Big),$$
and by H\"older's inequality this is bounded from above by
\begin{equation}
\label{hahah}
\Big(\sum_{\substack{|\mu_{1}|>N\\ |\tilde\mu_{j}|>(2j+1)^{3}N^{1-\frac1{100}}\\ j=2,\ldots,J}}\prod_{k=1}^{J}\frac1{|\tilde\mu_{k}|^{q'}}|\mu_{k}|^{+}\Big)^{\frac1{q'}}\Big(\sum_{\substack{\mathbf n\in\mathcal R(T)\\ \mathbf n_{r}=n}}\prod_{\beta\in T^\infty}\|v_{n_{\beta}}\|_{2}^{q}\Big)^{\frac1{q}}.
\end{equation}
The first sum behaves like $N^{-\frac{(q'-1)}{q'}J+\frac{(q'-1)}{100q'}(J-1)+}$ and for the remaining part we take the $l^{q}$ norm in $n$ and by the use of Young's inequality we are done. 

We have to make two observations for this lemma. Note that there is an extra factor $\sim J$ when we estimate the differences $N_{0}^{(J+1)}(v)-N_{0}^{(J+1)}(w)$ since $|a^{2J+1}-b^{2J+1}|\lesssim(\sum_{j=1}^{2J+1}a^{2J+1-j}b^{j-1})|a-b|$ has $O(J)$ many terms. Also, we have $c_{J}=|\mathcal I(J)|$ many summands in the operator $N_{0}^{(J+1)}$ since there are $c_{J}$ many trees of the $J$th generation and $c_{J}$ behaves like a double factorial in $J$ (see (\ref{setsetset3})). However, these observations do not cause any problem since the constant that we obtain from estimating the first sum of (\ref{hahah}) decays like a fractional power of a double factorial in $J$, or to be more precise we have 
\begin{equation}
\label{factori}
\frac{c_{J}}{\prod_{j=2}^{J}(2j+1)^{3\cdot\frac{q'-1}{q'}-}}.
\end{equation}
This fraction for large values of $J$ behaves like $J^{J}/J^{(3-\frac3{q'})J}=1/J^{(2-\frac3{q'})J}$ and in order to maintain the decay in the denominator we use the assumption of Theorem \ref{th1} namely that $1\leq q\leq 2.$ For the operator $N_{r}^{(J+1)}$ the proof is the same but in addition we use Lemma \ref{lem} for the operator $R_{2}^{t}-R_{1}^{t}$. 
\end{proof}
The estimate for the operator $N_{1}^{(J+1)}$, which generalises Lemma \ref{gg1}, is the following:

\begin{lemma}
\label{finaal2}
$$\|N_{1}^{(J+1)}(v)\|_{l^{q}L^{2}}\lesssim N^{-1+\frac2{q'}-\frac1{100q'}+(1-\frac1{100})(\frac1{q'}-1)(J-1)+}\|v\|_{M_{2,q}}^{2J+3},$$
and
$$\|N_{1}^{(J+1)}(v)-N_{1}^{(J+1)}(w)\|_{l^{q}L^{2}}\lesssim N^{-1+\frac2{q'}-\frac1{100q'}+(1-\frac1{100})(\frac1{q'}-1)(J-1)+}(\|v\|_{M_{2,q}}^{2J+2}+\|w\|_{M_{2,q}}^{2J+2})\|v-w\|_{M_{2,q}}.$$
\end{lemma}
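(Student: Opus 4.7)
The plan is to extend the proof of Lemma \ref{gg1} (which treated the $J=1$ case) to general $J\geq 1$ using the tree notation of Subsection $2.2$. Fix a tree $T\in T(J)$ with chronicle $\{T_j\}_{j=1}^J$, an index function $\mathbf n\in\mathcal R(T)$ with $\mathbf n_r=n$, a distinguished terminal node $\alpha\in T^\infty$, and let $(n_1^{(J+1)},n_2^{(J+1)},n_3^{(J+1)})$ denote the new frequencies produced by expanding $N_1^{t,\alpha}$, which I view as the $(J+1)$-st generation; set $\mu_{J+1}=2(n_\alpha-n_1^{(J+1)})(n_\alpha-n_3^{(J+1)})$ and $\tilde\mu_{J+1}=\tilde\mu_J+\mu_{J+1}$. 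Applying Lemma \ref{indu} to $\tilde q^{J,t}_{T^0,\mathbf n}$ together with the trivial $L^2$ bound for $Q^{1,t}_{n_\alpha}$ used in the proof of Lemma \ref{lemle} yields the pointwise estimate
$$\|\tilde q^{J,t}_{T^0,\mathbf n}(N_1^{t,\alpha}(\{v_{n_\beta}\}_{\beta\in T^\infty}))\|_2\lesssim\Big(\prod_{\beta\in T^\infty\setminus\{\alpha\}}\|v_{n_\beta}\|_2\Big)\|v_{n_1^{(J+1)}}\|_2\|v_{n_2^{(J+1)}}\|_2\|v_{n_3^{(J+1)}}\|_2\frac1{\prod_{k=1}^J|\tilde\mu_k|}.$$

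Next, I would apply H\"older's inequality in the $(2J+3)$-fold sum over all frequencies with exponents $q'$ and $q$, separating the weight $\prod_{k=1}^J|\tilde\mu_k|^{-1}$ together with the divisor-counting factors from the product $\prod_\beta\|v_{n_\beta}\|_2^q$. By (\ref{num}), fixing $n^{(k)}$ and $\mu_k$ leaves at most $o(|\mu_k|^+)$ choices for $n_1^{(k)},n_2^{(k)},n_3^{(k)}$, contributing an overall factor $\prod_{k=1}^{J+1}|\mu_k|^+$. On $A_N(n)^c\cap C_1^c\cap\cdots\cap C_{J-1}^c$ one has $|\mu_1|>N$ and $|\tilde\mu_j|\gtrsim N^{1-1/100}$ for $j=2,\ldots,J$, while the set $C_J$ from (\ref{sesee}) splits into two subcases. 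In the first, $|\tilde\mu_{J+1}|\leq(2J+3)^3|\tilde\mu_J|^{1-1/100}$, so once $\tilde\mu_1,\ldots,\tilde\mu_J$ are fixed, $\tilde\mu_{J+1}$ (and hence $\mu_{J+1}$) takes at most $O(|\tilde\mu_J|^{1-1/100})$ values; in the second, $|\tilde\mu_{J+1}|\leq(2J+3)^3|\mu_1|^{1-1/100}$, giving $O(|\mu_1|^{1-1/100})$ values. Using $|\mu_k|\lesssim\max(|\tilde\mu_{k-1}|,|\tilde\mu_k|)$ and computing the weight sum exactly as in (\ref{estst})--(\ref{estst1}), the second subcase dominates and yields the exponent $N^{-1+2/q'-1/(100q')+(1-1/100)(1/q'-1)(J-1)+}$. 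The remaining $l^q$ sum in $n$ is then closed by Young's inequality for convolutions, producing $\|v\|_{M_{2,q}}^{2J+3}$.

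Finally, one has to sum over the $|\mathcal I(J)|=(2J-1)!!$ ordered trees in $T(J)$ and the $2J+1$ possible terminal nodes $\alpha\in T^\infty$; as in the discussion surrounding (\ref{factori}) in Lemma \ref{finaal}, this combinatorial growth is absorbed by the decay of the product $\prod_{j=2}^{J+1}(2j+1)^{-3(q'-1)/q'+}$, thanks to the hypothesis $1\leq q\leq 2$. The Lipschitz estimate for $N_1^{(J+1)}(v)-N_1^{(J+1)}(w)$ follows by the standard telescoping argument, the resulting $O(J)$ extra terms being swallowed by the same combinatorial decay. The main technical obstacle I expect is the careful bookkeeping required to track all $J+1$ generations of divisor counts simultaneously and to verify, uniformly in $J$, that the second subcase of $C_J$ is indeed dominant, mirroring the comparison (\ref{estst})--(\ref{estst1}) carried out for $J=2$ in Lemma \ref{gg1}.
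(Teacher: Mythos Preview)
Your proposal is correct and follows essentially the same approach as the paper's own proof: fix a tree and terminal node, apply Lemma~\ref{indu} together with the trivial $L^2$ bound for $Q^{1,t}_{n_\alpha}$, use H\"older with exponents $q,q'$, invoke the divisor-counting argument and the two subcases of $C_J$ from~(\ref{sesee}), then close with Young's inequality and absorb the combinatorial growth via~(\ref{factori}). One small remark: in the general-$J$ proof the paper does not single out one subcase as dominating but shows both yield the same exponent $N^{-1+2/q'-1/(100q')+(1-1/100)(1/q'-1)(J-1)+}$, so your expectation that the second subcase dominates (carried over from the explicit comparison in Lemma~\ref{gg1}) is harmless but slightly stronger than what is actually needed.
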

\begin{proof}
As before, for fixed $n^{(j)}$ and $\mu_{j}$ there are at most $o(|\mu_{j}|^{+})$ many choices for $n_{1}^{(1)}, n_{2}^{(1)}, n_{3}^{(1)}$ and note that $\mu_{j}$ is determined by $\tilde\mu_{1},\ldots,\tilde\mu_{j}$. 

Let us assume that $|\tilde\mu_{J+1}|=|\tilde\mu_{J}+\mu_{J+1}|\lesssim(2J+3)^{3}|\tilde\mu_{J}|^{1-\frac1{100}}$ holds in (\ref{sesee}). Then, $|\mu_{J+1}|\lesssim|\tilde\mu_{J}|$ and for fixed $\tilde\mu_{J}$ there are at most $o(|\tilde\mu_{J}|^{1-\frac1{100}})$ many choices for $\tilde\mu_{J+1}$ and therefore, for $\mu_{J+1}=\tilde\mu_{J+1}-\tilde\mu_{J}$. For a fixed tree $T\in T(J)$ and $\alpha\in T^{\infty}$, by Lemma \ref{indu} and a trivial bound of the operator $Q^{1,t}_{n_{\alpha}}$ in $L^{2}$ (see proof of Lemma \ref{lemle}) the estimate for the operator $\tilde q^{J,t}_{T^0,\mathbf n}$ is as follows (remember that $|\hat{\mu}_{T}|\sim|\hat{\mu}_{J}|=\prod_{k=1}^{J}|\tilde\mu_{k}|$):
$$\sum_{\substack{\mathbf n\in\mathcal R(T)\\ \mathbf n_{r}=n}}\|\tilde q^{J,t}_{T^0,\mathbf n}(N_{1}^{t,\alpha}(\{v_{n_{\beta}}\}_{\beta\in T^{\infty}}))\|_{2}\lesssim$$
$$\sum_{\substack{\mathbf n\in\mathcal R(T)\\ \mathbf n_{r}=n}}\Big(\|v_{n_{\alpha_{1}}}\|_{2}\|v_{n_{\alpha_{2}}}\|_{2}\|v_{n_{\alpha_{3}}}\|_{2}\prod_{\beta\in T^{\infty}\setminus\{\alpha\}}\|v_{n_\beta}\|_{2}\Big)\Big(\prod_{k=1}^{J}\frac1{|\tilde\mu_{k}|}\Big),$$
and by H\"older's inequality we obtain the upper bound
\begin{equation}
\label{hahah2}
\Big(\sum_{\substack{|\mu_{1}|>N\\ |\tilde\mu_{j}|>(2j+1)^{3}N^{1-\frac1{100}}\\ j=2,\ldots,J}}|\tilde\mu_{J}|^{1-\frac1{100}+}\prod_{k=1}^{J}\frac1{|\tilde\mu_{k}|^{q'}}|\mu_{k}|^{+}\Big)^{\frac1{q'}}
\end{equation}
$$\Big(\sum_{\substack{\mathbf n\in\mathcal R(T)\\ \mathbf n_{r}=n}}\|v_{n_{\alpha_{1}}}\|_{2}^{q}\|v_{n_{\alpha_{2}}}\|_{2}^{q}\|v_{n_{\alpha_{3}}}\|_{2}^{q}\prod_{\beta\in T^{\infty}\setminus\{\alpha\}}\|v_{n_{\beta}}\|_{2}^{q}\Big)^{\frac1{q}}.$$
An easy calculation shows that the first sum behaves like $N^{-1+\frac2{q'}-\frac1{100q'}+(1-\frac1{100})(\frac1{q'}-1)(J-1)+}$ and then by taking the $l^q$ norm by the use of Young's inequality we are done. 

If $|\tilde\mu_{J+1}|\lesssim(2J+3)^{3}|\mu_{1}|^{1-\frac1{100}}$ holds in (\ref{sesee}), then for fixed $\mu_{j}$, $j=1,\ldots,J$, there are at most $O(|\mu_{1}|^{1-\frac1{100}})$ many choices for $\mu_{J+1}$. The same argument as above leads us to exactly the same expressions as in (\ref{hahah2}) but with the first sum replaced by the following:
$$\Big(\sum_{\substack{|\mu_{1}|>N\\ |\tilde\mu_{j}|>(2j+1)^{3}N^{1-\frac1{100}}\\ j=2,\ldots,J}}|\mu_{1}|^{1-\frac1{100}}\prod_{k=1}^{J}\frac1{|\tilde\mu_{k}|^{q'}}|\mu_{k}|^{+}\Big)^{\frac1{q'}},$$
which again is bounded from above by $N^{-1+\frac2{q'}-\frac1{100q'}+(1-\frac1{100})(\frac1{q'}-1)(J-1)+}$ and the proof is complete.
\end{proof}

\begin{remark}
\label{reme}
As it was done in \cite{GKO}, for $s>0$ we have to observe that all previous lemmata hold true if we replace the $l^{q}L^{2}$ norm by the $l^{q}_{s}L^{2}$ norm and the $M_{2,q}(\R)$ norm by the $M_{2,q}^{s}(\R)$ norm. To see this, consider $n^{(j)}$ large. Then, there exists at least one of $n_{1}^{(j)},n_{2}^{(j)},n_{3}^{(j)}$ such that $|n_{k}^{(j)}|\geq\frac13|n^{(j)}|$, $k\in\{1,2,3\}$, since we have the relation $n^{(j)}\approx n_{1}^{(j)}-n_{2}^{(j)}+n_{3}^{(j)}$. Therefore, in the estimates of the $J$th generation, there exists at least one frequency $n_{k}^{(j)}$ for some $j\in\{1,\ldots,J\}$ with the property
$$\langle n\rangle^{s}\leq 3^{js}\langle n_{k}^{(j)}\rangle^{s}\leq 3^{Js}\langle n_{k}^{(j)}\rangle ^{s}.$$
This exponential growth does not affect our calculations due to the double factorial decay in the denominator of (\ref{factori}).
\end{remark}

\subsection{Existence of Weak Solutions in the Extended Sense}

In this subsection we prove Theorem \ref{th1}. The calculations are the same as in \cite{GKO} where we just need to replace their $L^{2}$ norm by the $M_{2,q}(\R)$ norm. We will present them for the sake of completion. 

Let us start by defining the partial sum operator $\Gamma_{v_{0}}^{(J)}$ as
\begin{equation}
\label{gamaa}
\Gamma_{v_{0}}^{(J)}v(t)=v_{0}+\sum_{j=2}^{J}N_{0}^{(j)}(v)(n)-\sum_{j=2}^{J}N_{0}^{(j)}(v_{0})(n)
\end{equation}
$$+\int_{0}^{t}R_{1}^{\tau}(v)(n)+R_{2}^{\tau}(v)(n)+\sum_{j=2}^{J}N_{r}^{(j)}(v)(n)+\sum_{j=1}^{J}N_{1}^{(j)}(v)(n)\ d\tau,$$
where we have $N_{1}^{(1)}:=N_{11}^{t}$ from (\ref{main13}), $N_{0}^{(2)}:=N_{21}^{t}$ from (\ref{nex}), $N_{1}^{(2)}:=N_{31}^{t}$ from (\ref{patel}) and $N_{r}^{(2)}:=N_{4}^{t}$ from (\ref{patel2}) and $v_{0}\in M_{2,q}(\R)$ is a fixed function. 

In the following we will denote by $X_{T}=C([0,T],M_{2,q}(\R))$. Our goal is to show that the series appearing on the RHS of (\ref{gamaa}) converge absolutely in $X_{T}$ for sufficiently small $T>0$, if $v\in X_{T}$, even for $J=\infty$. Indeed, by Lemmata \ref{lem}, \ref{lemle}, \ref{finaal}, and \ref{finaal2} we obtain 
\begin{equation}
\label{argg}
\|\Gamma_{v_{0}}^{(J)}v\|_{X_{T}}\leq\|v_{0}\|_{M_{2,q}}+C\sum_{j=2}^{J}N^{-(1-\frac1{q'})(j-1)+\frac{q'-1}{100q'}(j-2)+}(\|v\|_{X_{T}}^{2j-1}+\|v_{0}\|_{M_{2,q}}^{2j-1})
\end{equation}
$$+CT\Big[\|v\|^{3}_{X_{T}}+\sum_{j=2}^{J}N^{-(1-\frac1{q'})(j-1)+\frac{q'-1}{100q'}(j-2)+}\|v\|_{X_{T}}^{2j+1}$$
$$+N^{\frac1{q'}+}\|v\|_{X_{T}}^{3}+\sum_{j=2}^{J}N^{-1+\frac2{q'}-\frac1{100q'}+(1-\frac1{100})(\frac1{q'}-1)(J-2)+}\|v\|_{X_{T}}^{2j+1}\Big].$$
Let us assume that $\|v_{0}\|_{M_{2,q}}\leq R$ and $\|v\|_{X_{T}}\leq\tilde R$, with $\tilde R\geq R\geq1.$ From (\ref{argg}) we have
\begin{equation}
\label{argg2}
\|\Gamma_{v_{0}}^{(J)}v\|_{X_{T}}\leq R+CN^{\frac1{q'}-1+}R^{3}\sum_{j=0}^{J-2}(N^{\frac1{q'}-1+\frac{q'-1}{100q'}}R^{2})^{j}+CN^{\frac1{q'}-1+}\tilde R^{3}\sum_{j=0}^{J-2}(N^{\frac1{q'}-1+\frac{q'-1}{100q'}}\tilde R^{2})^{j}
\end{equation}
$$+CT\Big[(1+N^{\frac1{q'}+})\tilde R^{3}+CN^{\frac1{q'}-1+}\tilde R^{5}\sum_{j=0}^{J-2}(N^{\frac1{q'}-1+\frac{q'-1}{100q'}}\tilde R^{2})^{j}$$
$$+N^{\frac2{q'}-1-\frac1{100q'}+}\tilde R^{5}\sum_{j=0}^{J-2}(N^{\frac1{q'}-1+\frac{q'-1}{100q'}}\tilde R^{2})^{j}\Big].$$
We choose $N=N(\tilde R)$ large enough, such that $N^{\frac1{q'}-1+\frac{q'-1}{100q'}}\tilde R^{2}=N^{99\frac{1-q'}{100q'}}\tilde R^{2}\leq\frac12$, or equivalently,
\begin{equation}
\label{argg3}
N\geq(2\tilde R^{2})^{\frac{100q'}{99(q'-1)}},
\end{equation}
so that the geometric series on the RHS of (\ref{argg2}) converge and are bounded by $2.$ Therefore, we arrive at
\begin{equation}
\label{argg4}
\|\Gamma_{v_{0}}^{(J)}v\|_{X_{T}}\leq R+2CN^{\frac1{q'}-1+}R^{3}+2CN^{\frac1{q'}-1+}\tilde R^{3}
\end{equation}
$$+CT\Big[(1+N^{\frac1{q'}+})\tilde R^{2}+2N^{\frac1{q'}-1+}\tilde R^{4}+2N^{\frac{199-100q'}{100q'}+}\tilde R^{4}\Big]\tilde R,$$
and we choose $T>0$ sufficiently small such that
\begin{equation}
\label{argg4,5}
CT\Big[(1+N^{\frac1{q'}+})\tilde R^{2}+2N^{\frac1{q'}-1+}\tilde R^{4}+2N^{\frac{199-100q'}{100q'}+}\tilde R^{4}\Big]<\frac1{10}.
\end{equation}
With the use of (\ref{argg3}) we see that $2CN^{\frac1{q'}-1+}\tilde R^{3}\leq CN^{\frac{1-q'}{100q'}+}\tilde R$ and by further imposing $N$ to be sufficiently large such that
\begin{equation}
\label{argg5}
C N^{\frac{1-q'}{100q'}+}<\frac1{10},
\end{equation}
we have
\begin{equation}
\label{argg6}
\|\Gamma_{v_{0}}^{(J)}v\|_{X_{T}}\leq R+\frac{R}{10}+\frac{\tilde R}{5}=\frac{11}{10}R+\frac15 \tilde R.
\end{equation}
Thus, for sufficiently large $N$ and sufficiently small $T>0$ the partial sum operators $\Gamma_{v_{0}}^{(J)}$ are well defined in $X_{T}$, for every $J\in\mathbb N\cup\{\infty\}$. We will write $\Gamma_{v_{0}}$ for $\Gamma_{v_{0}}^{(\infty)}$. 

Our next step is given an initial data $v_{0}\in M_{2,q}(\R)$ to construct a solution $v\in X_{T}$ in the sense of Definition \ref{def3}. To this direction, let $s>\max\{\frac1{q'},\frac12+\frac1{q}\}$ (so that $M_{2,q}^{s}(\R)$ is a Banach Algebra that embeds in $L^{2}(\R)$) and consider a sequence $\{v_{0}^{(m)}\}_{m\in\mathbb N}\in M_{2,q}^{s}(\R)\subset M_{2,q}(\R)$ whose Fourier transforms are all compactly supported (thus, all $v_{0}^{(m)}$ are smooth functions) and such that $v_{0}^{(m)}\to v_{0}$ in $M_{2,q}(\R)$ as $m\to\infty.$ Let $R=\|v_{0}\|_{M_{2,q}}+1$ and we can assume that $\|v_{0}^{(m)}\|_{M_{2,q}}\leq R$, for all $m\in\mathbb N.$ Denote by $v^{(m)}$ the local in time solution of NLS (\ref{maineq}) in $M_{2,q}^{s}(\R)$ with initial condition $v_{0}^{(m)}.$ It satisfies the Duhamel formulation:
\begin{equation}
\label{argg7}
v^{(m)}(t)=v_{0}^{(m)}+i\int_{0}^{t}N_{1}^{\tau}(v^{(m)})-R_{1}^{\tau}(v^{(m)})+R_{2}^{\tau}(v^{(m)})\ d\tau=
\end{equation}
$$v_{0}^{(m)}+\sum_{j=2}^{\infty}N_{0}^{(j)}(v^{(m)})(n)-\sum_{j=2}^{\infty}N_{0}^{(j)}(v_{0}^{(m)})(n)$$
$$+\int_{0}^{t}R_{1}^{\tau}(v^{(m)})(n)+R_{2}^{\tau}(v^{(m)})(n)+\sum_{j=2}^{\infty}N_{r}^{(j)}(v^{(m)})(n)+\sum_{j=1}^{\infty}N_{1}^{(j)}(v^{(m)})(n)\ d\tau=\Gamma_{v_{0}^{(m)}}v^{(m)}.$$
To see this it suffices to prove that the remainder term $N_{2}^{(J+1)}(v)$ given by (\ref{fina}) goes to zero in the $l^{q}L^{2}$ norm as $J$ goes to infinity for the smooth solutions $v^{(m)}.$ Indeed, we have the following lemma:

\begin{lemma}
\label{dankda}
Let $w$ be one of the smooth solutions $v^{(m)}.$ Then
$$\lim_{J\to\infty}\|N_{2}^{(J+1)}(w)\|_{l^{q}L^{2}}=0.$$
\end{lemma}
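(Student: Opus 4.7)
The plan is to bound $N_2^{(J+1)}(w)$ directly using Lemma \ref{indu} together with the trivial estimate (\ref{imppo}) highlighted in Remark \ref{expl}, and to observe that for fixed $w$ and $N$ the resulting bound vanishes as $J\to\infty$ by the same factorial-type decay already exploited in the proofs of Lemmata \ref{finaal} and \ref{finaal2}.

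First I would expand $N_1^{t,\alpha}(\{w_{n_\beta}\}_{\beta\in T^\infty})$ entry by entry as a sum of cubic pieces $Q^{1,t}_{n_\alpha}(w_{m_1},\bar w_{m_2},w_{m_3})$, each of which is Fourier-localized near $n_\alpha$ and therefore meets the localization hypothesis of Lemma \ref{indu} (exactly the observation recorded in Remark \ref{expl}). Combining Lemma \ref{indu} with the bound (\ref{imppo}) on each such piece and using the triangle inequality yields
\begin{align*}
\|N_2^{(J+1)}(w)(n)\|_{2}\lesssim\sum_{T\in T(J)}\sum_{\alpha\in T^\infty}\sum_{\substack{\mathbf n\in\mathcal R(T)\\\mathbf n_r=n}}\sum_{\substack{m_1-m_2+m_3\approx n_\alpha\\m_1\not\approx n_\alpha\not\approx m_3}}\frac{\|w_{m_1}\|_2\|w_{m_2}\|_2\|w_{m_3}\|_2\prod_{\beta\in T^\infty\setminus\{\alpha\}}\|w_{n_\beta}\|_2}{|\hat\mu_T|},
\end{align*}
where the summation is restricted to the frequency sets $A_N(n)^c,\,C_1^c,\dots,C_J^c$.

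From this point on the argument would mirror the proof of Lemma \ref{finaal2}: apply H\"older's inequality with exponents $q,q'$ to separate a phase sum from a norm sum, use the divisor estimate (\ref{num}) together with the lower bounds $|\mu_1|>N$ and $|\tilde\mu_k|>(2k+1)^3 N^{1-\frac1{100}}$ inherited from $A_N(n)^c$ and $C_1^c,\dots,C_{J-1}^c$ to control the phase sum, and use Young's inequality in the discrete variable to reduce the norm sum to $\|w\|_{M_{2,q}}^{2J+3}$. Crucially, no quantitative input from $C_J^c$ itself is required here, since $|\hat\mu_T|$ involves only $\tilde\mu_1,\dots,\tilde\mu_J$; the decay is already produced by the earlier-generation constraints. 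Summing over the $|\mathcal I(J)|=(2J-1)!!$ ordered trees and the $2J+1$ choices of the distinguished leaf $\alpha$, the final estimate takes the shape
\begin{align*}
\|N_2^{(J+1)}(w)\|_{l^qL^2}\lesssim\frac{(2J-1)!!}{\prod_{k=2}^J(2k+1)^{3(1-\frac1{q'})-\epsilon}}\cdot N^{-\frac{99}{100}(1-\frac1{q'})J+\epsilon}\cdot\|w\|_{M_{2,q}}^{2J+3}.
\end{align*}

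The main obstacle is confirming that the double-factorial blow-up in the number of ordered trees is absorbed by the denominator, but this is precisely the computation (\ref{factori}) already carried out for Lemma \ref{finaal}: under the standing assumption $1\leq q\leq 2$ (equivalently $q'\geq 2$) the combinatorial ratio decays at least as fast as $J^{-(2-3/q')J}\leq J^{-J/2}$. Since $N>0$ and $\|w\|_{M_{2,q}}<\infty$ are fixed and independent of $J$, the displayed estimate is of the form $\|w\|_{M_{2,q}}^{3}\bigl(\|w\|_{M_{2,q}}^{2}J^{-1/2}N^{-c}\bigr)^{J}$ with $c=\frac{99}{100}(1-\frac1{q'})>0$, which tends to $0$ as $J\to\infty$. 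The smoothness of $w=v^{(m)}$ plays no role in the estimate itself: it is used only to guarantee that the iterated differentiation-by-parts manipulations that produce the operator $N_2^{(J+1)}$ are rigorous identities and that $w(t)$ possesses a finite $M_{2,q}$ norm at every $t$.
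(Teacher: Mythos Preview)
There is a genuine gap in your Young-inequality step, and it is precisely the step where the paper \emph{does} use the smoothness of $w=v^{(m)}$.

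After H\"older you are left with a norm sum of the form
\[
\Big(\sum_{\substack{\mathbf n\in\mathcal R(T)\\ \mathbf n_{r}=n}}\ \prod_{\beta\in T^{\infty}\setminus\{\alpha\}}\|w_{n_{\beta}}\|_{2}^{q}\Big(\sum_{\substack{m_1-m_2+m_3\approx n_\alpha\\ m_1\not\approx n_\alpha\not\approx m_3}}\|w_{m_{1}}\|_{2}\|w_{m_{2}}\|_{2}\|w_{m_{3}}\|_{2}\Big)^{q}\Big)^{1/q}.
\]
The inner sum is the triple convolution $(\{\|w_k\|_2\}*\{\|w_k\|_2\}*\{\|w_k\|_2\})(n_\alpha)$, and after taking the $l^q$ norm in $n$ and applying Young/Fubini over the $2J$ remaining leaves you obtain $\|w\|_{M_{2,q}}^{2J}\cdot\|\{\|w_k\|_2\}*\{\|w_k\|_2\}*\{\|w_k\|_2\}\|_{l^q}$, \emph{not} $\|w\|_{M_{2,q}}^{2J+3}$. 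Young's inequality gives $\|a*a*a\|_{l^q}\leq\|a\|_{l^q}^3$ only when $q=1$; for $q>1$ there is no such bound. The reason the analogous step succeeds in Lemma~\ref{finaal2} is that on $C_J$ the new phase $\mu_{J+1}$ is constrained to a set of size $O(|\tilde\mu_J|^{1-1/100})$, so the count over the extra triple is absorbed into the phase sum. On $C_J^{c}$ you have only a \emph{lower} bound on $|\tilde\mu_{J+1}|$, which gives no control whatsoever on the number of admissible triples $(m_1,m_2,m_3)$; if you instead try to include the extra triple in the H\"older phase sum, that sum diverges because $|\hat\mu_T|$ does not depend on $\mu_{J+1}$.

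The paper closes this gap exactly by invoking smoothness: since $v^{(m)}$ has compactly supported Fourier transform, $w\in M_{2,1}$, so $\|\{\|w_k\|_2\}*\{\|w_k\|_2\}*\{\|w_k\|_2\}\|_{l^q}\lesssim\|w\|_{M_{2,1}}^{3}$, and the final bound reads $\|N_2^{(J+1)}(w)\|_{l^qL^2}\lesssim J^{-(2-3/q')J}\|w\|_{M_{2,1}}^{2J+3}$. Your assertion that ``the smoothness of $w=v^{(m)}$ plays no role in the estimate itself'' is therefore exactly backwards: smoothness is what makes the $l^q$ norm of the triple convolution finite and allows the argument to close.
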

\begin{proof}
Obviously,
$$\|N_{2}^{(J+1)}(w)\|_{2}\leq \sum_{T\in T(J)}\sum_{\alpha\in T^{\infty}}\sum_{\substack{\mathbf n\in\mathcal R(T)\\ \mathbf n_{r}=n}}\|\tilde{q}^{J,t}_{T^{0}}(N_{1}^{t,\alpha}(\{w_{n_{\beta}}\}_{\beta\in T^{\infty}}))\|_{2},$$
which by Lemma \ref{indu} is bounded by 
$$\sum_{T\in T(J)}\sum_{\alpha\in T^{\infty}}\sum_{\substack{\mathbf n\in\mathcal R(T)\\ \mathbf n_{r}=n}}\prod_{\beta\in T^{\infty}\setminus\{\alpha\}}\|w_{n_{\beta}}\|_{2}\ \frac{\|N_{1}^{t}(w)(n_{\alpha})\|_{2}}{\prod_{k=1}^{J}|\tilde{\mu}_{k}|}.$$
By the definition of the operator $N_{1}^{t}(w)$ (see (\ref{main10})) and Remark \ref{expl} we arrive at the upper bound 
$$\sum_{T\in T(J)}\sum_{\alpha\in T^{\infty}}\Big[\sum_{\substack{\mathbf n\in\mathcal R(T)\\ \mathbf n_{r}=n}}\prod_{\beta\in T^{\infty}\setminus\{\alpha\}}\|w_{n_{\beta}}\|_{2}\Big(\sum_{\substack{n_{\alpha}\approx n_{1}-n_{2}+n_{3} \\ n_{1}\not\approx n_{\alpha}\not\approx n_{3}}}\|w_{n_{1}}\|_{2}\|w_{n_{2}}\|_{2}\|w_{n_{3}}\|_{2}\Big)\ \frac1{\prod_{k=1}^{J}|\tilde{\mu}_{k}|}\Big].$$
H\"older's inequality for the sum inside the brackets with indices $1/q+1/q'=1$ implies the estimate (which is basically the same as in the proof of Lemma \ref{finaal})
$$\frac1{J^{(3-\frac3{q'})J}}\sum_{T\in T(J)}\sum_{\alpha\in T^{\infty}}\Big(\sum_{\substack{\mathbf n\in\mathcal R(T)\\ \mathbf n_{r}=n}}\prod_{\beta\in T^{\infty}\setminus\{\alpha\}}\|w_{n_{\beta}}\|_{2}^{q}\Big(\sum_{\substack{n_{\alpha}\approx n_{1}-n_{2}+n_{3} \\ n_{1}\not\approx n_{\alpha}\not\approx n_{3}}}\|w_{n_{1}}\|_{2}\|w_{n_{2}}\|_{2}\|w_{n_{3}}\|_{2}\Big)^{q}\Big)^{\frac1{q}}.$$
Now we take the $l^{q}$ norm to bound $\|N_{2}^{(J+1)}(w)\|_{l^{q}L^{2}}$ by
$$\frac1{J^{(3-\frac3{q'})J}}\sum_{T\in T(J)}\sum_{\alpha\in T^{\infty}}\Big(\sum_{n\in\Z}\ \sum_{\substack{\mathbf n\in\mathcal R(T)\\ \mathbf n_{r}=n}}\prod_{\beta\in T^{\infty}\setminus\{\alpha\}}\|w_{n_{\beta}}\|_{2}^{q}(\{\|w_{n_{1}}\|_{2}\}\ast\{\|w_{n_{2}}\|_{2}\}\ast\{\|w_{n_{3}}\|_{2}\})^{q}(n_{\alpha})\Big)^{\frac1{q}},$$
and by applying Young's inequality in $l^{1}$ for $2J+1$ sequences we see that
$$\|N_{2}^{(J+1)}(w)\|_{l^{q}L^{2}}\lesssim\frac1{J^{(2-\frac3{q'})J}}\ \|w\|_{M_{2,q}}^{2J}\|\{\|w_{n_{1}}\|_{2}\}\ast\{\|w_{n_{2}}\|_{2}\}\ast\{\|w_{n_{3}}\|_{2}\}\|_{l^{q}}.$$
In general, we do not know if the $l^{q}$ norm of this convolution is finite but since $w$ is sufficiently smooth we may assume that $w\in M_{2,1}$ which is a space (actually a Banach algebra) with bigger norm than $M_{2,q}$ and we obtain
$$\|N_{2}^{(J+1)}(w)\|_{l^{q}L^{2}}\lesssim\frac1{J^{(2-\frac3{q'})J}}\ \|w\|_{M_{2,1}}^{2J+3},$$\
from which the claim follows and the proof is complete.
\end{proof}
Next we will show that (\ref{argg7}) holds in $X_{T}$ for the same time $T=T(R)>0$ independent of $m\in\mathbb N.$ Indeed, fix $m\in\mathbb N$ and observe that the norm $\|v^{(m)}\|_{X_{t}}=\|v^{(m)}\|_{C([0,t],M_{2,q})}$ is continuous in $t.$ Since $\|v_{0}^{(m)}\|_{M_{2,q}}\leq R$ there is a time $T_{1}>0$ such that $\|v^{(m)}\|_{X_{T_{1}}}\leq 4R.$ Then, by repeating the previous calculations with $\tilde R=4R$ and keeping one of the factors as $\|v^{(m)}\|_{X_{T_{1}}}$ we get
\begin{equation}
\label{argg8}
\|v^{(m)}\|_{X_{T_{1}}}=\|\Gamma_{v_{0}^{(m)}}v^{(m)}\|_{X_{T_{1}}}\leq\frac{11}{10}R+\frac15\|v^{(m)}\|_{X_{T_{1}}},
\end{equation}
if $N$ and $T_{1}$ satisfy (\ref{argg3}), (\ref{argg4,5}) and (\ref{argg5}). Therefore, we have
\begin{equation}
\label{argg9}
\|v^{(m)}\|_{X_{T_{1}}}\leq\frac{19}{10}R<2R.
\end{equation}
Thus, from the continuity of $t\to\|v^{(m)}\|_{X_{t}}$, there is $\epsilon>0$ such that $\|v^{(m)}\|X_{T_{1}+\epsilon}\leq 4R.$ Then again, from (\ref{argg8}) and (\ref{argg9}) with $T_{1}+\epsilon$ in place of $T_{1}$ we derive that $\|v^{(m)}\|_{X_{T_{1}+\epsilon}}\leq 2R$ as long as $N$ and $T_{1}+\epsilon$ satisfy (\ref{argg3}), (\ref{argg4,5}) and (\ref{argg5}). By observing that these conditions are independent of $m\in\mathbb N$ we obtain a time interval $[0,T]$ such that $\|v^{(m)}\|_{X_{T}}\leq 2R$ for all $m\in\mathbb N$. 

A similar computation on the difference, by possibly taking larger $N$ and smaller $T$ leads to the estimate
\begin{equation}
\label{argg10}
\|v^{(m_{1})}-v^{(m_{2})}\|_{X_{T}}=\|\Gamma_{v_{0}^{(m_{1})}}v^{(m_{1})}-\Gamma_{v_{0}^{(m_{2})}}v^{(m_{2})}\|_{X_{T}}\leq
\end{equation}
$$(1+\frac1{10})\|v_{0}^{(m_{1})}-v_{0}^{(m_{2})}\|_{M_{2,q}}+\frac15\|v^{(m_{1})}-v^{(m_{2})}\|_{X_{T}},$$
which implies 
\begin{equation}
\label{argg11}
\|v^{(m_{1})}-v^{(m_{2})}\|_{X_{T}}\leq c\ \|v_{0}^{(m_{1})}-v_{0}^{(m_{2})}\|_{M_{2,q}},
\end{equation}
for some $c>0$ and therefore, the sequence $\{v^{(m)}\}_{m\in\mathbb N}$ is Cauchy in the Banach space $X_{T}.$ Let us denote by $v^{\infty}$ its limit in $X_{T}$ and by $u^{\infty}=S(t)v^{\infty}$. We will show that $u^{\infty}$ satisfies NLS (\ref{maineq}) in the interval $[0,T]$ in the sense of Definition \ref{def3}. For convenience, we drop the superscript $\infty$ and write $u, v$. In addition, let $u^{(m)}:=S(t)v^{(m)}$, where $v^{(m)}$ is the smooth solution to (\ref{mainmain}) with smooth initial data $v_{0}^{(m)}$ as described above and note that $u^{(m)}$ is the smooth solution to (\ref{maineq}) with smooth initial data $u_{0}^{(m)}:=v_{0}^{(m)}$. Furthermore, $u^{(m)}\to u$ in $X_{T}$ because $v^{(m)}\to v$ in $X_{T}$ and since convergence in the modulation space $M_{2,q}(\R)$ implies convergence in the sense of distributions we conclude that $\partial_{x}u^{(m)}\to\partial_{x} u$ and $\partial_{t} u^{(m)}\to\partial_{t} u$ in $\mathcal D'((0,T)\times \R)$. Since $u^{(m)}$ satisfies NLS (\ref{maineq}) for every $m\in\mathbb N$ we have that
$$\mathcal N(u^{(m)})=u^{(m)}|u^{(m)}|^{2}=-i\partial_{t}u^{(m)}+\partial_{x}^{2}u^{(m)},$$
also converges to some distribution $w\in\mathcal S'((0,T)\times \R)$. Our claim is the following:

\begin{proposition}
\label{argg12}
Let $w$ be the limit of $\mathcal N(u^{(m)})$ in the sense of distributions as $m\to\infty$. Then, $w=\mathcal N(u)$, where $\mathcal N(u)$ is to be interpreted in the sense of Definition \ref{def2}. 
\end{proposition}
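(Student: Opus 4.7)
The strategy is a standard triangle inequality argument using the smooth approximations $u^{(m)}$ as a bridge. Fix a test function $\phi\in C_c^\infty((0,T)\times\R)$ and a sequence $\{T_N\}$ of Fourier cutoff operators; the plan is to show that $\langle\mathcal N(T_N u),\phi\rangle\to\langle w,\phi\rangle$ as $N\to\infty$ by means of the three-term decomposition
\begin{equation*}
\mathcal N(T_N u)-w \;=\; \bigl[\mathcal N(T_N u)-\mathcal N(T_N u^{(m)})\bigr]+\bigl[\mathcal N(T_N u^{(m)})-\mathcal N(u^{(m)})\bigr]+\bigl[\mathcal N(u^{(m)})-w\bigr].
\end{equation*}
I would dispatch the third bracket first: by the very definition of $w$ given just above the proposition, its pairing with $\phi$ tends to $0$ as $m\to\infty$, and this limit is trivially uniform in $N$ since the expression is $N$-independent.

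For the middle bracket, with $m$ held fixed, I would exploit that the smooth approximation $u^{(m)}$ arises from initial data in the Banach algebra $M_{2,q}^s(\R)$ with compactly supported Fourier transform, so that $u^{(m)}\in C([0,T],L^2(\R)\cap L^\infty(\R))$. Since $\|m_N\|_\infty$ is uniformly bounded and $m_N\to 1$ pointwise, Plancherel together with dominated convergence on the Fourier side gives $T_N u^{(m)}\to u^{(m)}$ in $C([0,T],L^2(\R))$, while $\|T_N u^{(m)}\|_{L^\infty}$ remains uniformly controlled. Interpolation then upgrades convergence to $C([0,T],L^6(\R))$, and Hölder's inequality yields $\mathcal N(T_N u^{(m)})\to\mathcal N(u^{(m)})$ in $C([0,T],L^2(\R))$, which suffices for the distributional pairing.

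The principal work, and the main obstacle, lies in the first bracket: since $M_{2,q}(\R)$ is not a Banach algebra for $q>1$, the cubic difference cannot be estimated by brute force. The plan is to recycle the iterative splitting of Subsections $2.1$--$2.2$ applied to the smooth functions $T_N u$ and $T_N u^{(m)}$ (whose compact Fourier supports legitimise every formal differentiation-by-parts manipulation) and to use the Lipschitz-type estimates already proved in Lemmata \ref{lem}, \ref{lemle}, \ref{fir1}, \ref{fir2}, \ref{fir3}, \ref{finaal} and \ref{finaal2} to bound the pairing with $\phi$ by an expression of the shape $C(\phi,R)\,\|T_N u-T_N u^{(m)}\|_{X_T}$. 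The key uniform ingredient is that $T_N$ is bounded on $M_{2,q}(\R)$ independently of $N$, since $\|T_N\Box_n f\|_2\leq\|m_N\|_\infty\|\Box_n f\|_2$; this dominates the last norm by $\|u-u^{(m)}\|_{X_T}$, which tends to $0$ as $m\to\infty$ thanks to (\ref{argg11}).

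A standard diagonal choice of parameters then closes the proof: given $\varepsilon>0$, I would first fix $m$ large enough that the first and third brackets are each below $\varepsilon/3$ uniformly in $N$, and then select $N$ large enough that the second bracket is also below $\varepsilon/3$. The hard part is precisely verifying that the first-bracket estimate comes with a constant independent of $N$; this rests entirely on the uniform operator bound $\|T_N\|_{M_{2,q}\to M_{2,q}}\leq\|m_N\|_\infty$, which allows the delicate multilinear estimates of Section $2$ to be transferred verbatim from $v,w$ to the cut-off data $T_Nu,T_Nu^{(m)}$.
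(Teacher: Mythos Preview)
Your three-term decomposition and the order of choosing parameters (first $m$ uniformly in $N$, then $N$) match the paper exactly, and your treatment of the second and third brackets is fine.

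There is, however, a genuine gap in your handling of the first bracket. You propose to apply the iterative splitting of Subsections $2.1$--$2.2$ to $T_N u$ as well as to $T_N u^{(m)}$. The problem is that the differentiation-by-parts step introduces $\partial_t v_n$, and to proceed one must \emph{substitute} for this using equation (\ref{mainmain}), i.e.\ one must use that the underlying function solves NLS. The approximants $u^{(m)}$ are genuine smooth solutions, so this works for them; but at this point in the argument $u=u^{\infty}$ is merely the $X_T$-limit of the $u^{(m)}$, and its status as a solution (in any sense) is exactly what the proposition is establishing. Compact Fourier support of $T_N u$ gives $x$-smoothness and justifies the interchange of $\partial_t$ with $\int d\xi$, but it does not supply a usable expression for $\partial_t(T_N v)_n$ to feed back into the multilinear estimates.

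The paper sidesteps this by splitting the first bracket into two sub-steps. First, it applies the iteration only to the solutions $u^{(m)}$ to show that $\{\mathcal N(T_N u^{(m)})\}_m$ is Cauchy in $\mathcal S'$ \emph{uniformly in $N$}; this uses only the uniform bound $\|T_N\|_{M_{2,q}\to M_{2,q}}\le\|m_N\|_\infty$ that you already identified. Second, for each \emph{fixed} $N$, it identifies the limit as $\mathcal N(T_N u)$ by a crude H\"older estimate in $L^4_{T,x}$ (exploiting that $T_N u\in C([0,T],H^\infty)$); the constant here depends on $N$ through the size of $\mathrm{supp}(m_N)$, but that is harmless since this step need not be uniform. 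Combining uniform Cauchy convergence with pointwise-in-$N$ identification of the limit yields the required uniform-in-$N$ convergence of the first bracket. Your argument is easily repaired by inserting this two-step structure.
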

\begin{proof}
Consider a sequence of Fourier cutoff multipliers $\{T_{N}\}_{N\in\mathbb N}$ as in Definition \ref{def1}. We will prove that 
$$\lim_{N\to\infty}\mathcal N(T_{N}u)=w,$$
in the sense of distributions. Let $\phi$ be a test function and $\epsilon>0$ a fixed given number. Our goal is to find $N_{0}\in\mathbb N$ such that for all $N\geq N_{0}$ we have
\begin{equation}
\label{argg13}
|\langle w-\mathcal N(T_{N}u), \phi\rangle|<\epsilon.
\end{equation}
The LHS can be estimated as
$$|\langle w-\mathcal N(T_{N}u), \phi\rangle|\leq|\langle w-\mathcal N(u^{(m)}), \phi \rangle|+|\langle \mathcal N(u^{(m)})-\mathcal N(T_{N}u^{(m)}), \phi\rangle |$$
$$+|\langle\mathcal N(T_{N}u^{m})-\mathcal N(T_{N}u), \phi\rangle |.$$
The first term is estimated very easily since by the definition of $w$ we have that
\begin{equation}
\label{argg14}
|\langle w-\mathcal N(u^{(m)}), \phi\rangle|<\frac13\ \epsilon,
\end{equation}
for sufficiently large $m\in\mathbb N$. 

To continue, let us consider the second summand for fixed $m.$ By writing the difference $\mathcal N(u^{(m)})-\mathcal N(T_{N}u^{(m)})$ as a telescoping sum we have to estimate terms of the form
$$\Big|\int\int (I-T_{N})u^{(m)}\ |u^{(m)}|^{2}\ \phi\ dx\ dt\Big|,$$
where $I$ denotes the identity operator. By H\"older's inequality and (\ref{yeye}) we obtain that this integral is bounded by
$$\|\phi\|_{L^{2}_{T,x}}\|u^{(m)}\|_{L^{\infty}_{T,x}}^{2}\|(I-T_{N})u^{(m)}\|_{L^{2}_{T,x}}\lesssim C_{\phi}\|u^{(m)}\|_{C((0,T),M_{2,q}^{s})}^{2}\|(I-T_{N})u^{(m)}\|_{L^{2}_{T,x}}$$
$$\leq C_{\phi, m}\|(I-T_{N})u^{(m)}\|_{L^{2}_{T,x}},$$
where $L^{2}_{T,x}=L^{2}((0,T)\times\R)$. By definition of the Fourier cutoff operators, the function $\mathcal F\Big((I-T_{N})u^{(m)}(\cdot,t)\Big)(\xi)$ converges pointwise in $t$ and $\xi$ and by an application of the Dominated Convergence Theorem, there is $N_{0}=N_{0}(m)$ with the property
\begin{equation}
\label{argg15}
C_{\phi, m}\|(I-T_{N})u^{(m)}\|_{L^{2}_{T,x}}<\frac13\ \epsilon,
\end{equation}
for all $N\geq N_{0}$. 

For the last term, we need to observe two things. Firstly, let us consider the sequence $\{\mathcal N(T_{N}u^{(m)})\}_{m\in\mathbb N}$, for each fixed $N.$ By applying the iteration process that we described in the previous subsection to $\{S(-t)\mathcal N(T_{N}u^{(m)})\}_{m\in\mathbb N}$, which is basically the nonlinearity in equation (\ref{mainmain}) up to the operator $T_{N}$, we see that $\{\mathcal N(T_{N}u^{(m)})\}_{m\in\mathbb N}$ is Cauchy in $\mathcal S'((0,T)\times\R)$, as $m\to\infty$ for each fixed $N\in\mathbb N$ since the sequence $u^{(m)}$ is Cauchy in $C((0,T),M_{2,q}(\R))$. Since the multipliers $m_{N}$ of $T_{N}$ are uniformly bounded in $N$ we conclude that this convergence is uniform in $N$. 

Secondly, let us observe that for fixed $N$, $T_{N}u$ is in $C((0,T), H^{\infty}(\R))$ since $u\in M_{2,q}(\R)$ and the multiplier $m_{N}$ of $T_{N}$ is compactly supported. Hence, $\mathcal N(T_{N}u)=T_{N}u|T_{N}u|^{2}$ makes sense as a function. Therefore, for fixed $N$ by H\"older's inequality we get
$$|\langle\mathcal N(T_{N}u^{(m)})-\mathcal N(T_{N}u), \phi\rangle|\leq\|\phi\|_{L^{4}_{T,x}}(\|T_{N}u^{(m)}\|^{2}_{L^{4}_{T,x}}+\|T_{N}u\|^{2}_{L^{4}_{T,x}})\|T_{N}u^{(m)}-T_{N}u\|_{L^{4}_{T,x}}$$
$$\leq C_{\phi, \|u\|_{X_{T}}}M^{\frac34}T^{\frac34}\|u^{(m)}-u\|_{X_{T}}<\frac13\ \epsilon,$$
where the number $M=M(N)>0$ is chosen so that $\mbox{supp}(m_{N})\subset[-M,M]$. Here we used H\"older's inequality in the interval $(0,T)$ to pass from the $L^{4}$ norm to the $L^{\infty}$ norm and in the space variable an application of Parseval's identity together with the fact that the multiplier operators $T_{N}$ have compactly supported symbols $m_{N}$. Hence, $\mathcal N(T_{N}u^{(m)})$ converges to $\mathcal N(T_{N}u)$ in $\mathcal S'((0,T)\times\R)$ as $m\to\infty$ for each fixed $N$. 

From these two observations we derive that $\mathcal N(T_{N}u^{(m)})\to \mathcal N(T_{N}u)$ in $\mathcal S'((0,T)\times\R)$ as $m\to\infty$ uniformly in $N.$ Equivalently, 
\begin{equation}
\label{argg16}
|\langle\mathcal N(T_{N}u^{(m)})-\mathcal N(T_{N}u), \phi\rangle|<\frac13\ \epsilon,
\end{equation}
for all large $m$, uniformly in $N$. Therefore, (\ref{argg13}) follows by choosing $m$ sufficiently large so that (\ref{argg14}) and (\ref{argg16}) hold, and then choosing $N_{0}=N_{0}(m)$ such that (\ref{argg15}) holds. 
\end{proof}
Finally, we have shown that the function $u=u^{\infty}$ is a solution to the NLS (\ref{maineq}) in the sense of Definition \ref{def3}. 

\subsection{Unconditional Uniqueness}

In Subsections $2.1$ and $2.2$ we switched the order of space integration with time differentiation and summation in the discrete variable with time differentiation too. In the following we justify these formal computations and obtain the unconditional wellposedness of Theorem \ref{mainyeah}. 

In this subsection we assume that $u_{0}\in M_{2,q}^{s}(\R)$ with either $s\geq0$ and $1\leq q\leq\frac32$ or $\frac32<q\leq2$ and $s>\frac23-\frac1{q}$ (see also Remark \ref{bbbb} for the case $q=2$ and $s=1/6$) which by (\ref{yeye233}) and (\ref{hhh}) implies that   
\begin{equation}
\label{jo}
M_{2,q}^{s}(\R)\hookrightarrow M_{3,\frac32}(\R)\hookrightarrow L^{3}(\R).
\end{equation}
By (\ref{jo}) we know that if $u$ is a solution of NLS (\ref{maineq}) in the space $C([0,T],M_{2,q}^{s}(\R))$ then $u$ and hence $v=e^{it\partial_{x}^{2}}u$ are elements of $X_{T}\hookrightarrow C([0,T], L^{3}(\R)).$ Thus, the nonlinearity of NLS (\ref{maineq}) makes sense as an element of $C([0,T], L^{1}(\R))$ and by (\ref{main3}) we obtain that $\partial_{t}v_{n}\in C([0,T],L^{1}(\R)).$ Next, let us state a lemma:

\begin{lemma}
\label{didi}
Let $f(t,x),\partial_{t}f(t,x)\in C([0,T],L^{1}(\R^{d}))$ and define the distribution $\int_{\R^{d}}f(\cdot,x)dx$ by
$$\Big\langle \int_{\R^{d}}f(\cdot, x)dx, \phi\Big\rangle=\int_{\R}\int_{\R^{d}}f(t,x)\phi(t)dxdt,$$
with $\phi\in C^{\infty}_{c}(\R).$ Then, $\partial_{t}\int_{\R^{d}}f(\cdot,x)dx=\int_{\R^{d}}\partial_{t}f(\cdot,x)dx.$
\end{lemma}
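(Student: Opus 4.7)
The plan is to show that the map $F: [0,T] \to \mathbb{C}$ defined by $F(t) = \int_{\mathbb{R}^d} f(t,x)\,dx$ is a genuine $C^1$ function of $t$ in the classical sense, whose ordinary derivative is $\int_{\mathbb{R}^d} \partial_t f(t,x)\,dx$. Once this is established, the conclusion follows because the distributional derivative of a $C^1$ function agrees with its classical derivative; in particular, the identity stated in the lemma is just the equality of the two distributions induced by these $C^1$ scalar functions.

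First, I would interpret $f, \partial_t f \in C([0,T], L^1(\mathbb{R}^d))$ as saying that $f$ is a continuously differentiable map from $[0,T]$ into the Banach space $L^1(\mathbb{R}^d)$, with Fréchet derivative $\partial_t f$. Under this interpretation, the Banach-space fundamental theorem of calculus (for Bochner integrals) gives, for any $t_0, t \in [0,T]$,
\begin{equation*}
f(t, \cdot) - f(t_0, \cdot) = \int_{t_0}^{t} \partial_s f(s, \cdot)\,ds
\end{equation*}
as an identity in $L^1(\mathbb{R}^d)$. Integrating both sides against $dx$ (a bounded linear functional on $L^1$), and invoking Fubini's theorem, which is justified since
\begin{equation*}
\int_{t_0}^{t} \int_{\mathbb{R}^d} |\partial_s f(s, x)|\,dx\,ds \leq |t - t_0| \sup_{s \in [0,T]} \|\partial_s f(s, \cdot)\|_{L^1} < \infty,
\end{equation*}
one obtains $F(t) - F(t_0) = \int_{t_0}^{t} \bigl(\int_{\mathbb{R}^d} \partial_s f(s,x)\,dx\bigr)\,ds$.

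The integrand $s \mapsto \int_{\mathbb{R}^d} \partial_s f(s, x)\,dx$ is continuous on $[0,T]$: indeed, by the same duality,
\begin{equation*}
\Bigl| \int_{\mathbb{R}^d} \partial_s f(s, x) - \partial_{s'} f(s', x)\,dx \Bigr| \leq \|\partial_s f(s, \cdot) - \partial_{s'} f(s', \cdot)\|_{L^1},
\end{equation*}
which tends to $0$ as $s' \to s$ by continuity of $\partial_s f$ in $L^1$. Hence the classical FTC applied to this scalar-valued formula shows that $F \in C^1([0,T])$ with $F'(t) = \int_{\mathbb{R}^d} \partial_t f(t, x)\,dx$. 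Pairing against a test function $\phi \in C_c^\infty(\mathbb{R})$ (supported, say, in $(0,T)$; the general case is handled by restriction) and integrating by parts in $t$ gives the claimed equality of distributions.

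There is essentially no serious obstacle; the only subtlety is to justify Fubini and ensure the interpretation of $\partial_t f$ as a Banach-space valued derivative is consistent with its use as a pointwise-in-$x$ derivative in the earlier arguments of the paper. This identification is automatic when both $f$ and the pointwise derivative lie in $C([0,T], L^1)$, so the proof reduces to the two standard ingredients above.
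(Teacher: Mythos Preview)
Your argument is correct, but the paper takes a more direct route. Instead of establishing that $F(t)=\int_{\R^d}f(t,x)\,dx$ is classically $C^1$, the paper works entirely at the level of distributions: pair $\partial_t\int f\,dx$ with a test function $\phi$, unwind the definition to $-\int_{\R}\int_{\R^d}f(t,x)\phi'(t)\,dx\,dt$, apply Fubini (justified by $f\in C([0,T],L^1)$), use the definition of the weak time derivative $\partial_t f$ for (a.e.) fixed $x$, and apply Fubini once more (justified by $\partial_t f\in C([0,T],L^1)$) to land on $\langle\int_{\R^d}\partial_t f\,dx,\phi\rangle$. No Bochner integrals or Banach-space fundamental theorem of calculus are needed.

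Your approach has the advantage of yielding the stronger conclusion that $F$ is genuinely $C^1$, at the cost of invoking the Fr\'echet differentiability interpretation and the Bochner FTC; the paper's approach is shorter and stays within the distributional framework, which is all that is needed for the application. Both are perfectly valid.
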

\begin{proof}
By definition
$$\Big\langle\partial_{t}\int_{\R^{d}}f(\cdot,x)dx,\phi\Big\rangle=-\Big\langle\int_{\R^{d}}f(\cdot,x)dx,\phi'\Big\rangle=-\int_{\R}\int_{\R^{d}}f(t,x)\phi'(t)dxdt,$$
and since $f\in C([0,T],L^{1}(\R^{d}))$ we can change the order of integration by Fubini's Theorem and obtain
$$-\int_{\R^{d}}\int_{\R}f(t,x)\phi'(t)dtdx=\int_{\R^{d}}\int_{\R}\partial_{t}f(t,x)\phi(t)dtdx=\int_{\R}\int_{\R^{d}}\partial_{t}f(t,x)\phi(t)dxdt,$$
where in the first equality we used the definition of the weak derivative of $f$ and in the second equality Fubini's Theorem with the fact that $\partial_{t}f\in C([0,T],L^{1}(\R^{d})).$ The last integral is equal to 
$$\Big\langle\int_{\R^{d}}\partial_{t}f(\cdot,x)dx,\phi\Big\rangle,$$
and the proof is complete.
\end{proof}
Consider now (\ref{ttr}) for fixed $n$ and $\xi.$ We want to apply Lemma \ref{didi} to the function
$$f(t,\xi_{1},\xi_{3})=\sigma_{n}(\xi)\frac{e^{-2it(\xi-\xi_{1})(\xi-\xi_{3})}}{-2i(\xi-\xi_{1})(\xi-\xi_{3})}\ \hat{v}_{n_{1}}(\xi_{1})\hat{\bar{v}}_{n_{2}}(\xi-\xi_{1}-\xi_{3})\hat{v}_{n_{3}}(\xi_{3}),$$
where $\xi\approx n, \xi_{1}\approx n_{1},\xi_{3}\approx n_{3}, \xi-\xi_{1}-\xi_{3}\approx -n_{2}$ and $(n,n_{1},n_{2},n_{3})\in A_{N}(n)^{c}$ given by (\ref{idid}). Notice that $f, \partial_{t}f \in C([0,T],L^{1}(\R^{2}))$ since $v\in C([0,T],M_{2,q}^{s}(\R))$ and $\partial_{t}v_{n}\in C([0,T],L^{1}(\R))$ for all integers $n$. Thus,
$$\partial_{t}\Big[\int_{\R^2}\sigma_{n}(\xi)\frac{e^{-2it(\xi-\xi_{1})(\xi-\xi_{3})}}{-2i(\xi-\xi_{1})(\xi-\xi_{3})}\ \hat{v}_{n_{1}}(\xi_{1})\hat{\bar{v}}_{n_{2}}(\xi-\xi_{1}-\xi_{3})\hat{v}_{n_{3}}(\xi_{3})d\xi_{1}d\xi_{3}\Big]=$$
$$\int_{\R^2}\sigma_{n}(\xi)\partial_{t}\Big[\sigma_{n}(\xi)\frac{e^{-2it(\xi-\xi_{1})(\xi-\xi_{3})}}{-2i(\xi-\xi_{1})(\xi-\xi_{3})}\ \hat{v}_{n_{1}}(\xi_{1})\hat{\bar{v}}_{n_{2}}(\xi-\xi_{1}-\xi_{3})\hat{v}_{n_{3}}(\xi_{3})\Big]d\xi_{1}d\xi_{3}=$$
$$\int_{\R^2}\sigma_{n}(\xi)\partial_{t}\Big[\frac{e^{-2it(\xi-\xi_{1})(\xi-\xi_{3})}}{-2i(\xi-\xi_{1})(\xi-\xi_{3})}\Big]\hat{v}_{n_{1}}(\xi_{1})\hat{\bar{v}}_{n_{2}}(\xi-\xi_{1}-\xi_{3})\hat{v}_{n_{3}}(\xi_{3})d\xi_{1}d\xi_{3}+$$
$$\int_{\R^2}\sigma_{n}(\xi)\frac{e^{-2it(\xi-\xi_{1})(\xi-\xi_{3})}}{-2i(\xi-\xi_{1})(\xi-\xi_{3})}\partial_{t}\Big[\hat{v}_{n_{1}}(\xi_{1})\hat{\bar{v}}_{n_{2}}(\xi-\xi_{1}-\xi_{3})\hat{v}_{n_{3}}(\xi_{3})\Big]d\xi_{1}d\xi_{3}.$$
In the second equality we used the product rule which is applicable since $v\in C([0,T],L^{3}(\R))$ implies that $\partial_{t}v_{n}\in C([0,T],L^{1}(\R)).$

Finally it remains to justify the interchange of differentiation in time and summation in the discrete variable but this is done in exactly the same way as in \cite{GKO} (Lemma $5.1$). Similar arguments justify the interchange on the $J$th step of the infinite iteration procedure. 

Thus, for $v\in C([0,T],M_{2,q}^{s}(\R))$ with $M_{2,q}^{s}(\R)\hookrightarrow L^{3}(\R)$ we can repeat the calculations of Subsections $2.1$ and $2.2$ and $2.3$ to obtain the following expression in $X_{T}$ for the solution $u$ of NLS (\ref{maineq}) with initial data $u_{0}$
\begin{equation} 
\label{antt1}
u=\Gamma_{u_{0}}u+\lim_{J\to\infty}\int_{0}^{t}N_{2}^{(J+1)}(u)d\tau,
\end{equation}
where the limit is an element of $X_{T}.$ Its existence follows from the fact that the operators $\Gamma_{u_{0}}^{(J)}u$ converge to $\Gamma_{u_{0}}u$ in the norm of $X_{T}$ as $J\to\infty$. The important estimate about the remainder operator $N_{2}^{(J)}$ is the following:

\begin{lemma}
\label{finafinafina}
$$\lim_{J\to\infty}\|N_{2}^{(J)}(v)\|_{l^{\infty}L^{2}}=0.$$
\end{lemma}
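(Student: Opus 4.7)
The plan is to adapt Lemma \ref{dankda} to (i) the weaker $l^{\infty}L^{2}$ norm in the discrete variable and (ii) the non-smooth class $C([0,T],M_{2,q}^{s}(\R))$, replacing the use of $M_{2,1}$ by the embedding $M_{2,q}^{s}(\R)\hookrightarrow M_{2,3/2}(\R)$ available under the hypotheses of Theorem \ref{mainyeah} (directly via $\ell^{q}\hookrightarrow\ell^{3/2}$ when $1\leq q\leq 3/2$, and via (\ref{yeye233}) when $3/2<q\leq 2$ and $s>2/3-1/q$). I would first apply Lemma \ref{indu} to bound, for each fixed $n$,
\[
\|N_{2}^{(J+1)}(v)(n)\|_{L^{2}} \lesssim \sum_{T\in T(J)}\sum_{\alpha\in T^{\infty}}\sum_{\substack{\mathbf n\in\mathcal R(T)\\ \mathbf n_{r}=n}}\frac{\prod_{\beta\in T^{\infty}\setminus\{\alpha\}}\|v_{n_\beta}\|_{2}\cdot\|N_{1}^{t}(v)(n_{\alpha})\|_{2}}{\prod_{k=1}^{J}|\tilde\mu_{k}|}.
\]

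Next, using the pointwise-in-$n_{\alpha}$ estimate (\ref{imppo}) of Remark \ref{expl} together with Young's inequality $\ell^{3/2}\ast\ell^{3/2}\ast\ell^{3/2}\hookrightarrow\ell^{\infty}$, I would extract the uniform bound $\|N_{1}^{t}(v)(n_{\alpha})\|_{2}\lesssim\|v\|_{M_{2,3/2}}^{3}\lesssim\|v\|_{M_{2,q}^{s}}^{3}$. To the remaining sum I would apply Hölder's inequality in $\mathbf n$ with the pair of exponents $(3,3/2)$ -- rather than $(q,q')$ as in Lemma \ref{finaal} -- thereby splitting it as $\bigl(\sum 1/\prod|\tilde\mu_{k}|^{3}\bigr)^{1/3}$ times $\bigl(\sum\prod_{\beta\neq\alpha}\|v_{n_\beta}\|_{2}^{3/2}\bigr)^{2/3}$. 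The second factor factorises cleanly: once the root constraint $\mathbf n_{r}=n$ is used to determine $n_{\alpha}$ from the remaining $2J$ terminal frequencies (via the tree), the $2J$ free indices decouple and produce exactly $\|v\|_{M_{2,3/2}}^{2J}$. The first factor I would estimate by divisor-counting (\ref{num}) combined with the constraints $|\mu_{1}|>N$ and $|\tilde\mu_{j}|>(2j+1)^{3}N^{1-1/100}$ ($j\geq 2$), yielding a bound of order $N^{-\gamma J+}\prod_{j=2}^{J}(2j+1)^{-(2-\epsilon)}$ for some $\gamma>0$.

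Combining the three pieces and summing over trees ($|T(J)|=(2J-1)!!$) and choices of $\alpha$ (there are $2J+1$ of them), the resulting bound is
\[
\|N_{2}^{(J+1)}(v)\|_{l^{\infty}L^{2}}\lesssim\|v\|_{M_{2,q}^{s}}^{3}\cdot\frac{(2J-1)!!\,(2J+1)}{\prod_{j=2}^{J}(2j+1)^{2-\epsilon}}\cdot N^{-\gamma J+}\cdot\|v\|_{M_{2,3/2}}^{2J}.
\]
By Stirling the combinatorial prefactor behaves like $(C/J)^{(1-\epsilon)J}$, so the whole expression tends to $0$ as $J\to\infty$ for any fixed $v$. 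The main obstacle -- and the reason we cannot simply import the proof of Lemma \ref{dankda} -- is that the final $l^{q}$ summation in $n$ is no longer available to absorb an extra power of the coefficients, so the compensation must come entirely from the Hölder pairing; the pair $(3,3/2)$ dictated by the $L^{3}$ embedding (\ref{jo}) is delicate precisely because the power $3$ on $|\tilde\mu_{k}|$ has to be strong enough so that the double-factorial in the denominator beats the double-factorial enumeration $(2J-1)!!$ of trees, while the dual power $3/2$ on $\|v_{n_\beta}\|_{2}$ must remain compatible with the hypotheses on $(s,q)$.
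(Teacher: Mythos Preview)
Your argument is correct and takes a genuinely different route from the paper. Rather than staying with the definition (\ref{fina}) and bounding $\|N_{1}^{t}(v)(n_{\alpha})\|_{2}$ directly, the paper first invokes the identity (\ref{fina1}) once more to write
\[
N_{2}^{(J)}(v)(n)=\partial_{t}\bigl(N_{0}^{(J+1)}(v)(n)\bigr)+\sum_{T\in T(J)}\sum_{\alpha\in T^{\infty}}\sum_{\mathbf n}\tilde q^{J,t}_{T^{0},\mathbf n}\bigl(\partial_{t}^{(\alpha)}\{v_{n_{\beta}}\}\bigr),
\]
disposes of the boundary piece $\partial_{t}N_{0}^{(J+1)}$ via Lemma \ref{finaal}, and in the remaining sum estimates $\|\partial_{t}v_{n_{\alpha}}\|_{2}=\|\Box_{n_{\alpha}}(|u|^{2}u)\|_{2}\lesssim\|u\|_{3}^{3}$ straight from the equation using the $L^{3}$ embedding (\ref{jo}), keeping the H\"older pair $(q,q')$ of Lemma \ref{finaal}. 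You avoid this extra differentiation by parts: you absorb $N_{1}^{t}$ into a uniform $\ell^{\infty}$ bound through the three-fold Young inequality in $\ell^{3/2}$ and then run H\"older with the fixed pair $(3,3/2)$, which succeeds because the resulting decay $\prod_{j}(2j+1)^{-(2-\epsilon)}$ still beats the tree count $(2J-1)!!$. Your route is cleaner in that it never has to confront the time-derivative term $\partial_{t}N_{0}^{(J+1)}$ (whose pointwise-in-$t$ control is handled rather tersely in the paper); the paper's route has the advantage of using the physical-space $L^{3}$ norm directly and of recycling the exponent pair already fixed in Subsection 2.2. Both arguments lean on the hypothesis of Theorem \ref{mainyeah} through equivalent embeddings---yours via $M_{2,q}^{s}\hookrightarrow M_{2,3/2}$, the paper's via $M_{2,q}^{s}\hookrightarrow L^{3}$.
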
 
\begin{proof}
By (\ref{fina1}) we can write the remainder operator as the following sum
\begin{equation}
\label{ppp}
N_{2}^{(J)}(v)(n)=\partial_{t}(N_{0}^{(J+1)}(v)(n))+\sum_{T\in T(J)}\sum_{\alpha\in T^{\infty}}\sum_{\substack{\mathbf n\in\mathcal R(T)\\ \mathbf n_{r}=n}}\tilde q^{J,t}_{T^0,\mathbf n}(\partial_{t}^{(\alpha)}(\{v_{n_{\beta}}\}_{\beta\in T^{\infty}})),
\end{equation}
where we define the action of $\partial_{t}^{(\alpha)}$ onto the set of functions $\{v_{n_{\beta}}\}_{\beta\in T^{\infty}}$ to be the same set of functions except for the $\alpha$ node where we replace $v_{n_{\alpha}}$ by the function $\partial_{t}v_{n_{\alpha}}.$ 

We control the first summand $\partial_{t}(N_{0}^{(J+1)}(v)(n))$ by Lemma \ref{finaal}. For the last summand of the RHS of (\ref{ppp}) we estimate its $L^{2}$ norm in exactly the same way as in the proof of Lemma (\ref{dankda}) and arrive at the upper bound
$$\sum_{T\in T(J)}\sum_{\alpha\in T^{\infty}}\sum_{\substack{\mathbf n\in\mathcal R(T)\\ \mathbf n_{r}=n}}\prod_{\beta\in T^{\infty}\setminus\{\alpha\}}\|v_{n_{\beta}}\|_{2}\ \frac{\|\partial_{t}v_{n_{\alpha}}\|_{2}}{\prod_{k=1}^{J}|\tilde{\mu}_{k}|},$$
which by H\"older's inequality with exponents $\frac1{q}+\frac1{q'}=1$ implies
$$\frac1{J^{(3-\frac3{q'})J}}\sum_{T\in T(J)}\sum_{\alpha\in T^{\infty}}\Big(\sum_{\substack{\mathbf n\in\mathcal R(T)\\ \mathbf n_{r}=n}}\prod_{\beta\in T^{\infty}\setminus\{\alpha\}}\|v_{n_{\beta}}\|_{2}^{q}\|\partial_{t}v_{n_{\alpha}}\|_{2}^{q}\Big)^{\frac1{q}}.$$
Then for the sum inside the parenthesis we apply Young's inequality in the discrete variable where for the first $2J$ functions we take the $l^{1}$ norm and for the last the $l^{\infty}$ norm we arrive at the estimate 
$$\|v\|_{M_{2,q}}^{2J}\sup_{n\in\Z}\|\partial_{t}v_{n}\|_{2}=\|v\|_{M_{2,q}}^{2J}\|\partial_{t}v_{n}\|_{l^{\infty}L^{2}}.$$
Since by (\ref{main3}) we have $\partial_{t}v_{n}=e^{it\partial_{x}^{2}}\Box_{n}(|u|^{2}u)$ it is straightforward to obtain
$$\|\partial_{t}v_{n}\|_{l^{\infty}L^{2}}\lesssim\|v\|_{M_{2,q}}^{3}.$$
Indeed, from (\ref{Sch}) and since $\Box_{n}(|u|^{2}u)$ is nicely localised it suffices to estimate 
$$\|\Box_{n}(|u|^{2}u)\|_{2}\lesssim\|\Box_{n}(|u|^{2}u)\|_{1}\lesssim\||u|^{2}u\|_{1}=\|u\|_{3}^{3}\lesssim\|u\|_{M_{2,q}}^{3}=\|v\|_{M_{2,q}}^{3},$$ 
where we used (\ref{Bern1}) and (\ref{Bern}). Therefore, putting everything together we have
$$\|N_{2}^{(J)}(v)\|_{l^{\infty}L^{2}}\lesssim\frac1{J^{(2-\frac3{q'})J}}\|v\|_{M_{2,q}}^{2J+3},$$
which finishes the proof. 
\end{proof}
This lemma implies that $\lim_{J\to\infty}\int_{0}^{t}N_{2}^{(J+1)}(u)d\tau$ is equal to $0$ in $X_{T}.$ From this we obtain the unconditional uniqueness of NLS (\ref{maineq}) since if there are two solutions $u_{1}$ and $u_{2}$ with the same initial datum $u_{0}$ we obtain by (\ref{argg11})
$$\|u_{1}-u_{2}\|_{X_{T}}=\|\Gamma_{u_{0}}u_{1}-\Gamma_{u_{0}}u_{2}\|_{X_{T}}\lesssim\|u_{0}-u_{0}\|_{M_{2,q}^{s}}=0.$$

\textbf{Acknowledgments}: The author gratefully acknowledge financial support by the Deu\-tsche Forschungs\-gemeinschaft (DFG) through CRC 1173. He would also like to thank Peer Kunstmann from KIT for his helpful comments and fruitful discussions. Finally, he would like to thank the referees of the paper for their constructive criticism.

\end{section}

\end{document}